\theoremstyle{plain}
\title{On Read's type operators on Hilbert spaces}
\author{Sophie Grivaux}
\address{
Laboratoire Paul Painlev\' e, UMR 8524, Universit\'e des Sciences et
Technologies de Lille, Cit\' e Scientifique, 59655 Villeneuve d'Ascq
Cedex, France}
\email{grivaux@math.univ-lille1.fr}
\author{Maria Roginskaya}
\address{Department of Mathematical Sciences,
Chalmers University of Technology, SE-41296 G\"oteborg, Sweden, \emph{and}
Department of Mathematical Sciences,
G\"oteborg University, SE-41296 G\"oteborg, Sweden}
\email{maria@chalmers.se}
\subjclass{47A15, 47A16}
\keywords{Cyclic and hypercyclic vectors, orbits of linear
operators, invariant subspaces,
Haar and Gauss null sets, orbit-unicellular operators,
orbit-reflexive operators}
\thanks{The first author was partially supported
by ANR-Projet Blanc DYNOP}
\newcommand{\ch }{
\newdimen\hauteur \hauteur=7.3 pt
\mkern 3 mu\hbox
{\vrule height \hauteur depth 0 pt width .4 pt\kern 0.8 pt
\vrule height \hauteur depth 0 pt width .4 pt\kern -2.51 pt
\vrule height .4 pt depth 0 pt width 3.3 pt}
\raise 7 pt \hbox{\kern -2.4 pt
\vrule height .4 pt depth 0 pt width 1.2 pt}
\raise 5.5 pt \hbox{\kern -2.5 pt
\vrule height .2 pt depth 0 pt width .56 pt}
\raise 5.6 pt \hbox{\kern -.46 pt
\vrule height .2 pt depth 0 pt width .56 pt}
\raise 5.7 pt \hbox{\kern -.46 pt
\vrule height .2 pt depth 0 pt width .56 pt}
\raise 5.8 pt \hbox{\kern -.46 pt
\vrule height .2 pt depth 0 pt width .56 pt}
\raise 5.9 pt \hbox{\kern -.46 pt
\vrule height .2 pt depth 0 pt width .56 pt}
\raise 6 pt \hbox{\kern -.46 pt
\vrule height .2 pt depth 0 pt width .56 pt}
\raise 6.1 pt \hbox{\kern -.46 pt
\vrule height .2 pt depth 0 pt width .56 pt}
\raise 6.2 pt \hbox{\kern -.46 pt
\vrule height .2 pt depth 0 pt width .56 pt}
\raise 6.3 pt \hbox{\kern -.46 pt
\vrule height .2 pt depth 0 pt width .56 pt}
\raise 6.4 pt \hbox{\kern -.46 pt
\vrule height .2 pt depth 0 pt width .56 pt}
\raise 6.5 pt \hbox{\kern -.46 pt
\vrule height .2 pt depth 0 pt width .56 pt}
\raise 6.6 pt \hbox{\kern -.46 pt
\vrule height .2 pt depth 0 pt width .56 pt}
\kern 2 pt}
\newcommand{\ls}{\lesssim}
\newcommand{\gs}{\gtrsim}
\def\K{\ensuremath{\mathbb K}}
\def\R{\ensuremath{\mathbb R}}
\def\Z{\ensuremath{\mathbb Z}}
\def\C{\ensuremath{\mathbb C}}
\def\N{\ensuremath{\mathbb N}}
\def\P{\ensuremath{\mathbb P}}
\newcommand{\sep}{separable}
\newcommand{\hy}{hypercyclic}
\newcommand{\ops}{operators}
\newcommand{\op}{operator}
\newcommand{\wrt}{with respect to}
\newcommand{\proba}{probability}
\newcommand{\ga}{Gaussian}
\newcommand{\mea}{measure}
\newcommand{\nd}{non-degenerate}
\newcommand{\ifff}{if and only if}
\newcommand{\pss}[2]{\ensuremath{{\langle #1,#2\rangle}}}
\newcommand{\loi}{lay-off interval}
\newtheorem{theorem}{Theorem}[section]
\newtheorem{lemma}[theorem]{Lemma}
\newtheorem{proposition}[theorem]{Proposition}
\newtheorem{corollary}[theorem]{Corollary}
\theoremstyle{definition}}
\theoremstyle{definition}}
\newtheorem{fact}[theorem]{Fact}
\theoremstyle{definition}}
\theoremstyle{definition}}
\theoremstyle{definition}\newtheorem{remark}[theorem]{Remark}}
\theoremstyle{definition}\newtheorem*{FFC Criterion}{Frequent
Faber-hypercyclicity Criterion}}
\newtheorem*{Hypercyclicity Criterion}{Hypercyclicity Criterion}
{\theoremstyle{definition}\newtheorem*{GS Criterion}{Godefroy-Shapiro
Criterion}}
\def\piednote#1{\let\oldfn=\thefootnote\def\thefootnote{}\footnote{\noindent#1}%
\addtocounter{footnote}{-1}\def\thefootnote{\oldfn}}
\begin{document}

\begin{abstract}
Using Read's construction of operators without non-trivial invariant
subspaces/subsets on $\ell_{1}$ or $c_{0}$, we construct examples of
operators on a Hilbert space whose set of hypercyclic vectors is
``large'' in various senses. We give an example of an operator such that
the closure of
every orbit is a closed subspace, and then, answering a question of D.
Preiss, an example of an operator such that the  set of
its non-hypercyclic vectors is Gauss null. This operator has the property that it
is orbit-unicellular, i.e. the family of the closures of its orbits is
totally ordered. We also exhibit an example of an
operator on a Hilbert space which is not orbit-reflexive.
\end{abstract}
\maketitle

\section{Introduction}
Let $X$ be a real or complex infinite-dimensional \sep\ Banach
 space, and $T$ a bounded \op\ on $X$. In this paper we will be concerned
 with the study of the structure of
 orbits of vectors $x\in X$ under
 the action of $T$ from various points of view. If $x$ is any vector
 of $X$, the \emph{orbit} of $x$ under $T$ is the set
$\mathcal{O}rb(x,T)=\{T^{n}x \textrm{ ; } n\geq 0\}$.
The closure of this orbit is denoted by $\overline{\mathcal{O}rb}(x,T)$.
 The \emph{linear
orbit} of $x$ is the linear span of the orbit of $X$, i.e. the set
$\{p(T)x \textrm{ ; } p\in \K[\zeta ]\}$, $\K=\R$ or $\C$. When the
linear orbit of $x$ is dense, $x$ is said to be \emph{cyclic}, and $x$ is
said to be
\emph{\hy}\ when the orbit itself is dense. An \op\ admitting a
cyclic (resp. \hy)\ vector is called cyclic (resp \hy).
\par\smallskip
The structure of the set $HC(T)$ of \hy\ vectors for a \hy\ \op\ $T\in
\mathcal{B}(X)$ has been the subject of many investigations: linear
structure ($HC(T)$ always contains a dense linear manifold, see \cite{Bo},
sometimes an
infinite-dimensional closed subspace, see \cite{LM}), topological structure ($HC(T)$ is
a dense $G_{\delta }$ subset of $X$), measure-theoretic structure (see
for instance \cite{F}, \cite{BG})... In
particular, it is interesting to look for \ops\ whose set of \hy\ (or
even cyclic) vectors is as large as possible, especially in the Hilbert
space setting. Throughout the paper $H$ will denote a real or complex
separable infinite-dimensional Hilbert space. A major open question in
\op\ theory is to know whether, given any bounded
\op\ $T$ on $H$, there exists a closed subspace $M$ (resp. a closed
subset $F$) which is non-trivial, i.e. $M\not = \{0\}$  and $M\not = H$,
and invariant by $T$, i.e. $T(M)\subseteq M$ (resp, with $F$). These
problems are known as the Invariant Subspace and the Invariant Subset
Problems. If one does not work with \ops\ acting on a Hilbert space, but
with \ops\ acting on general \sep\ Banach spaces instead, the question
has been answered in the negative by Enflo \cite{E} and Read \cite{R}.
Read in particular constructed an \op\ without non-trivial invariant
subspaces in the space $\ell_{1}$ of summable sequences, and even an \op\
without non-trivial invariant closed subsets on $\ell_{1}$ \cite{R2}. In
other words $HC(T)=\ell_1\setminus \{0\}$ for this \op.
 The Invariant Subspace Problem is still open
 in the reflexive setting, and the closest one could get
 \cite{R3} to this
 are examples of operators without non-trivial
invariant subspaces on some spaces with separable dual, such as $c_{0}$
for instance.
\par\smallskip
Our aim in this paper is to present a simplified version of Read's
construction in \cite{R2} which is adapted to the Hilbert space setting,
and to obtain in this way \ops\ whose orbits have interesting properties:
we first construct an example of a Hilbert space \op\  such that the
orbit of every vector $x$ coincides with its linear orbit. This
corresponds to the construction of what we call the ``(c)-part'' in
Read's type
\ops\ (see Section $2$ for definitions).

\begin{theorem}\label{th1}
There exists a \hy\
 \op\ on $H$ such that for every vector $x\in H$, the closure of
 the orbit
$\mathcal{O}rb(x,T)$ is a subspace, i.e. the closures of the two sets
$\{T^{n}x \textrm{ ; } n\geq 0\}$ and $\{p(T)x \textrm{ ; } p\in \K[\zeta
]\}$ coincide.
\end{theorem}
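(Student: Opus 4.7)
My plan is to adapt Read's construction from \cite{R2} to the Hilbert space setting, identifying $H$ with $\ell_2(\N)$ via a fixed orthonormal basis $(e_i)_{i\geq 0}$ and defining $T$ by prescribing its action on this basis. As in Read's work, I would partition the nonnegative integers into an alternating sequence of \emph{working intervals} $I_n=[u_n,v_n]$ and \emph{lay-off intervals} $L_n$ separating them, with rapidly growing gaps chosen so that the requisite boundedness estimates on $\ell_2$ survive. On each $L_n$, $T$ acts essentially as a (weighted) forward shift $Te_i=\lambda_i e_{i+1}$, transporting vectors through the basis without mixing coordinates. All the real work is done on the $I_n$'s, where $T$ is designed so that, for any vector $x$ lying in the closed linear span of $\{e_0,\dots,e_{u_n-1}\}$ with suitable normalization, there exists an integer $m_n$ such that $T^{m_n}x$ is close, in $\ell_2$-norm, to a prescribed polynomial $p_n(T)x$. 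Varying the polynomials $p_n$ over a countable dense subfamily of $\K[\zeta]$ (e.g., polynomials with rational coefficients) then forces $\overline{p(T)x}\subseteq\overline{\mathcal{O}rb(x,T)}$ for every such $p$, giving the reverse inclusion needed for the theorem.

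Concretely, the (c)-part mechanism I would implement works as follows. Within the working interval $I_n$, $T$ first uses a ``reading'' phase that detects the coordinates of $x$ in the first block of the basis, then uses a ``rewriting'' phase to reassemble, on a later block of basis vectors, a good approximation of $p_n(T)x$ expressed in the same basis. For an arbitrary $x\in H$, one approximates $x$ by its truncation $x_N$ to the first $N$ basis vectors; provided $N$ is chosen in terms of the working intervals ahead, the orbit of $x$ stays close to the orbit of $x_N$ for the relevant iterates $m_n$, and the approximation of $p_n(T)x$ by $T^{m_n}x$ is transferred. Since $0$ is itself a polynomial value $p(T)x$ with $p\equiv 0$, this argument also shows $0\in\overline{\mathcal{O}rb(x,T)}$, from which closure under scalar multiplication follows automatically together with closure under $T$ and under addition of polynomial images.

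The main obstacle, and the reason this is not a direct transcription of \cite{R2}, is that Read's original estimates are tailored to the unconditional $\ell_1$-norm, where the norm of a finite sum of basis vectors is the sum of the moduli of the coefficients and one controls things coordinate by coordinate. In $\ell_2$, one must handle square-summation of many small contributions arising from the reading and rewriting phases, and maintaining a uniform bound $\|T\|<\infty$ forces a more delicate choice of the weights $\lambda_i$ on the lay-off intervals and of the gaps $|L_n|$: these have to grow fast enough so that the accumulated $\ell_2$-error coming from all working intervals remains summable, but not so fast that the iterates $T^{m_n}$ lose the ability to approximate the prescribed polynomials $p_n(T)x$. The bulk of the proof is therefore a careful parameter tracking: fixing the sequence of target polynomials $(p_n)$, choosing $|L_n|$ and the weights $\lambda_i$ inductively to control both $\|T\|$ and the approximation errors, and verifying that the resulting operator is simultaneously hypercyclic (by dense orbit of a generic reference vector $e_0$) and satisfies the subspace property for every $x\in H$.
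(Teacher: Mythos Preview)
Your outline captures the right architecture—Read-type construction, alternating working and lay-off blocks, a dense family of target polynomials $p_n$, and the goal of making $T^{m_n}x$ approximate $p_n(T)x$—but it omits the mechanism that actually makes the argument close, and without it the proof does not go through.

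The central difficulty is the tail estimate you pass over in the sentence ``the orbit of $x$ stays close to the orbit of $x_N$ for the relevant iterates $m_n$.'' Even if $\|T\|\le 1+\varepsilon$, the iterate $T^{m_n}$ has no a priori bound better than $(1+\varepsilon)^{m_n}$, so there is no reason $\|T^{m_n}(x-x_N)\|$ should be small. The paper handles this with a \emph{uniform} bound of the form $\|T^{c_{k,n}}(x-\pi_{[0,\xi_n]}x)\|\le 100\,\|x-\pi_{[0,\xi_n]}x\|$ (its Proposition~2.2), and obtaining this is where the real work lies. It requires not a single working interval per step but an entire \emph{fan} of ``shade'' intervals $[r c_n, r c_n+\xi_n]$, $r=0,\dots,h_n$ (in fact a multi-index lattice in the paper's presentation), designed so that $T^{c_{k,n}}$ maps one shade into the next with a controlled factor. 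Your description with one interval $I_n$ per step gives you Fact~2.1 (approximation on finitely supported vectors) but not Proposition~2.2, and the latter is indispensable.

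Two secondary points. First, letting $p_n$ run over all rational polynomials is dangerous: the uniform estimates on the fan require $|p_{k,n}|\le 2$, and arbitrary moduli would destroy boundedness. The paper restricts to $|p|\le 2$ and then recovers general $p$ by a short doubling argument at the very end. Second, the paper's framing is dual to yours: $T$ is kept as the plain forward shift $Te_j=e_{j+1}$ throughout, and it is the \emph{norm} that is changed by declaring a new sequence $(f_j)$ orthonormal; on lay-off intervals $f_j=\lambda_j e_j$, and on working intervals $f_j$ is a carefully chosen linear combination of $e_j$ and $p_{t,n}(T)e_{j-c_{t,n}}$. This is equivalent to your picture after a change of basis, but it makes the bookkeeping (and in particular the computation of $T^{c_{k,n}}f_j$ across the fan) much cleaner.
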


We define in Section $2$ the \ops\ which will be needed for the proof of
Theorem \ref{th1}, explain the role of the (c)-fan, and then prove
Theorem \ref{th1} in Section $3$. This can be seen as the basic
construction, and in Section $4$ we elaborate on it to prove the next
results.
\par\smallskip
Section $4$ is devoted to the study of the set $HC(T)$ from the point of
view of geometric measure theory: it is well known and easy to prove that
whenever $T$ is hypercyclic on $X$, $HC(T)$ is a dense $G_{\delta }$
subset of $X$, or equivalently, its complement $HC(T)^{c}$ is
a set of the first category,
i.e. a countable union of closed sets with empty interior,
so $HC(T)^{c}$ is a ``small'' set from this point of view. Increasing the
size of $HC(T)$ means having $HC(T)^{c}$ smaller, and various notions of
smallness have been considered in this setting. In particular, Bayart
studied in \cite{B} examples of \ops\ such that $HC(T)^{c}$ was $\sigma
$-porous, i.e. a countable union of porous sets. The notion of porosity
quantifies the fact that a set has empty interior: a subset $E$ of a
Banach space $X$ is called \emph{porous} if there exists a $\lambda \in
]0,1[$ such that the following is true: for every $x\in E$  and every
$\varepsilon >0$, there exists a point $y\in X$ such that
$0<||y-x||<\varepsilon $ and  $E\cap B(y, \lambda ||y-x||)$ is empty. A
countable union  of porous sets is called \emph{$\sigma $-porous}. We
refer the reader to the references \cite{Z} or \cite{BL} for more
information on porous and $\sigma $-porous sets, and their role in
questions related to the differentiation of Banach-valued functions.
\par\smallskip
Bayart constructed in \cite{B} examples of \ops\ $T$ on $F$-spaces such
that $HC(T)^{c}$ was $\sigma $-porous, but on Banach spaces the only
\ops\  which were known to have this property were the ones without
nontrivial closed invariant subsets. Hence a question of \cite{B} was to
know whether it was possible to have a Hilbert space \op\ $T$ such that
$HC(T)^{c}$ was $\sigma $-porous. This question was answered in the
affirmative by David Preiss \cite{P}, who constructed a  bilateral
weighted shift on $\ell_{2}(\Z)$ having this property. This proof has
been since recorded in \cite{BMM}. This leads to another question, which
was asked by David Preiss too \cite{P}: does there exist a Hilbert space
\op\ such that $HC(T)^{c}$ is Haar null? Recall that a subset $A$ of $H$
is said to be \emph{Haar null} if there exists a Borel
 \proba\ \mea\ $m$ on
$H$ such that for every $x\in H$, the translate $x+A$ of $A$ has
$m$-\mea\ $0$. The class of Haar null sets is another $\sigma $-ideal of
``small sets'', different from the class of $\sigma $-porous sets, and
actually these two classes are not comparable: a result of Preiss and
Tiser \cite{PT} is that any (real) Banach space can be decomposed as the
disjoint union of two sets, of which one is $\sigma $-porous and the other
Haar null. See \cite{BL} for more on this and related classes of
negligible sets.
\par\smallskip
We answer here Preiss's question in the affirmative by showing the
following stronger result:

\begin{theorem}\label{th2}
There exists a bounded \op\ $T$ on the Hilbert space $H$ such that the set
$HC(T)^{c}$ is a countable union of subsets of closed hyperplanes of $H$.
In particular $HC(T)^{c}$ is Gauss null (hence Haar null) and $\sigma
$-porous.
\end{theorem}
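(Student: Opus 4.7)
The plan is to build on the construction used for Theorem \ref{th1} and refine it so that the set of non-hypercyclic vectors is trapped inside a countable family of closed hyperplanes. Since Theorem \ref{th1} already produces a Read-type operator $T$ on $H$ for which every orbit closure $\overline{\mathcal{O}rb}(x,T)$ coincides with the closure of the linear orbit $\overline{\{p(T)x\,;\,p\in\K[\zeta]\}}$, a vector $x$ fails to be hypercyclic \emph{if and only if} it fails to be cyclic. Thus I would reduce the theorem to showing that the set of non-cyclic vectors of the refined operator $T$ is contained in a countable union of closed hyperplanes of $H$.

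To obtain this, I would revisit the phase/stage structure of the Read construction from Section~2 and adjust the parameters so that the operator continues to satisfy the (c)-fan property but is now ``almost'' transitive on the whole space: at each stage $n$ of the construction one targets, via a suitably chosen polynomial $p_n(T)$, a countable dense set $\{y_n\}_{n\geq 1}$ of $H$, and one forces the approximation $p_n(T)x \approx y_n$ to go through for every vector $x$ whose coordinates in a specified (countable) family of distinguished basis directions are all nonzero. The natural candidates for the distinguished directions are the vectors of the working basis produced by the construction itself (the vectors associated to the ``(a)'' and ``(b)'' slots of the Read fan): denoting these by $(e_{k})_{k\ge 0}$, one shows that if $\pss{x}{e_{k}^{*}}\ne 0$ for every $k$, then the approximation scheme on which the Read construction is based can be run on $x$, producing a dense orbit. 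Equivalently, the inclusion
\[
HC(T)^{c}\ \subset\ \bigcup_{k\ge 0}\,\ker(e_{k}^{*})
\]
holds, which is exactly a countable union of subsets of closed hyperplanes.

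Once this inclusion is established, the measure-theoretic conclusions are immediate from well-known facts. A single closed hyperplane $\ker\varphi$ in $H$ is clearly porous (take any $y\notin\ker\varphi$ close to $x\in\ker\varphi$; the ball around $y$ of radius comparable to $\mathrm{dist}(y,\ker\varphi)$ misses $\ker\varphi$), so a countable union of such hyperplanes is $\sigma$-porous. Moreover every closed hyperplane is Gauss null: for any non-degenerate Gaussian measure $m$ on $H$, the image of $m$ under a nonzero continuous linear functional is a non-degenerate Gaussian on $\R$, which gives zero mass to the single point $0$; the same argument applied to translated Gaussians yields that $\ker\varphi$ (and hence any subset thereof) is Gauss null, and this class is stable under countable unions. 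Combined with the inclusion above this gives both assertions of the theorem.

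The main technical obstacle, as in Theorem \ref{th1}, is the construction itself: one needs to run the Read-type iterative scheme (choosing lay-off intervals, $a$-, $b$- and $c$-vectors, and matching weights) in such a way that \emph{both} properties are preserved simultaneously — the (c)-fan structure, which ensures orbit = linear orbit closure for every vector, \emph{and} the uniform controllability of the approximation $p_n(T)x\to y_n$ under the single genericity assumption $\pss{x}{e_k^{*}}\ne 0$ for all~$k$. Making the estimates on the polynomials $p_n$ quantitative enough that they depend only on finitely many coordinates of $x$ (and on their non-vanishing), rather than on the full vector $x$, is where the delicate bookkeeping lies and where the bulk of Section~4 will have to be spent.
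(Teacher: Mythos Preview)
Your strategy diverges from the paper's, and the core step has a real gap. You propose to show directly that
\[
HC(T)^{c}\ \subset\ \bigcup_{k\ge 0}\ker(e_{k}^{*})
\]
by arguing that any vector with all distinguished coordinates nonzero is hypercyclic. But in the Read machinery the approximation at stage $n$ requires a polynomial $p$ with $|p|$ bounded by a quantity of order $1/|e_{r_n}^{*(n)}(x)|^{\xi_n-r_n+1}$ (see Lemma~\ref{lem1b}), and the $(b)$-fan can only rescale this down to $|q|\leq 1$ provided $b_n$ dominates that quantity. Since $b_n$ is \emph{fixed at step $n$ before $x$ is given}, you need a lower bound on the relevant coordinate that depends only on $\xi_n$, not merely that it be nonzero. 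For any preassigned sequence of thresholds there are vectors with all coordinates nonzero yet below every threshold from some point on, so ``non-vanishing for all $k$'' cannot force hypercyclicity. This is not bookkeeping; it is a structural limitation of the scheme. What one actually obtains from the $(b)$-fan alone (Proposition~\ref{prop16}) is that $HC(T)^c$ lies in a countable union of \emph{slabs} $\{|e_0^{*(n)}(x)|\le 2^{-n}M\}$, enough for Haar null and $\sigma$-porous but not for hyperplanes.

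The paper gets the hyperplane statement by a different route: it first proves the stronger structural Theorem~\ref{th2bis} (orbit-unicellularity), comparing for each pair $x,y$ the index of the \emph{first sufficiently large} $e_j$-coordinate of $\pi_{[0,\xi_n]}x$ and $\pi_{[0,\xi_n]}y$, and showing that whichever vector has the earlier large coordinate infinitely often has the other in its orbit closure. Once the orbit closures are totally ordered, $HC(T)^c=\bigcup_\alpha M_\alpha$ is an increasing union of proper closed $T$-invariant subspaces; a separability argument then extracts a countable subfamily $(M_{\alpha_i})$ with the same union, and each $M_{\alpha_i}$ sits inside a closed hyperplane. The point you are missing is precisely this order-theoretic intermediate step: it replaces the unattainable ``nonzero $\Rightarrow$ hypercyclic'' by a comparison principle between \emph{pairs} of vectors, which is what the quantitative estimates actually deliver.
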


Recall that a subset $A$ of $H$ is said to be \emph{Gauss null} if for
every \nd\ \ga\ \mea\ $\mu $ on $H$, $\mu (A)=0$. Since the $\mu $-\mea\
of a closed hyperplane vanishes for every such $\mu $, $HC(T)^{c}$ will
clearly be Gauss null.
\par\smallskip
The proof of Theorem \ref{th2} requires that we complicate a bit the
construction of Section $2$, and we introduce what we call the
``(b)-part'' in Read's examples in order to achieve this. For clarity's
sake we show first in Section $4$ that an \op\ $T$ can be constructed
with $HC(T)^{c}$ Haar null (and $\sigma $-porous). Then we show in
Section $5$ the following result, which is interesting in itself and
which easily implies Theorem \ref{th2}:

\begin{theorem}\label{th2bis}
There exists a bounded \op\ $T$ on $H$ which is
\emph{orbit-unicellular}: the family $(
\overline{\mathcal{O}rb}(x,T))_{x\in H}$ of all the closures of its
orbits is totally ordered, i.e. for any pair $(x,y)$ of vectors of $H$,
either $\overline{\mathcal{O}rb}(x,T)\subseteq
\overline{\mathcal{O}rb}(y,T)$ or $\overline{\mathcal{O}rb}(y,T)
\subseteq \overline{\mathcal{O}rb}(x,T)$. In particular the \op\ induced
by $T$ on any invariant subspace $M$ of $H$ is \hy, i.e. $M=\overline{\mathcal{O}rb}
(x,T)$ for some $x\in H$.
\end{theorem}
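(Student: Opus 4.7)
The plan is to build on the (b)-part/(c)-part machinery introduced in Section~$4$ and arrange the construction so that the closed $T$-invariant subspaces of $H$ form a single ascending chain $\{0\}\subsetneq H_{1}\subsetneq H_{2}\subsetneq\cdots\subsetneq H$, each $H_{n}$ being a proper $T$-invariant closed subspace attached to the first $n$ stages of the (b)-fan, with $\bigcup_{n}H_{n}$ dense in $H$. To each non-zero $x\in H$ one then associates a level $n(x)=\min\{n:x\in H_{n}\}\in\N\cup\{\infty\}$ (with $n(x)=\infty$ if $x$ lies in no $H_{n}$), and the crux of the argument is the identity
\[
\overline{\mathcal{O}rb}(x,T)=H_{n(x)},\qquad x\in H\setminus\{0\},
\]
with the convention $H_{\infty}=H$.

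The inclusion $\overline{\mathcal{O}rb}(x,T)\subseteq H_{n(x)}$ is immediate from $T$-invariance. The reverse inclusion is the technical heart: one must sharpen the approximation scheme of Section~$4$ so that for every $n$ not only $T|_{H_{n}}$ is hypercyclic but \emph{every} non-zero vector of $H_{n}$ is hypercyclic for $T|_{H_{n}}$. In other words, the Read-type mechanism has to act level by level, with the orbit of any $x$ of level $n$ becoming dense in $H_{n}$ using only directions inside $H_{n}$. This level-internal hypercyclicity is where I expect the main obstacle to lie: the Read scheme normally uses the \loi s and the (c)-fan ``globally'', moving freely through the whole space, whereas here one has to tune the (b)-weights and the \loi s so that every target in $H_{n}$ can be reached by an orbit that stays inside $H_{n}$, without producing uncontrolled components in $H\setminus H_{n}$. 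I expect a rapid decay of the (b)-weights up the fan and a careful interleaving of the (b)- and (c)-phases to be decisive here, so that each stage sees a self-contained copy of the earlier construction.

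Granted the identity, orbit-unicellularity is immediate: for any $x,y\in H\setminus\{0\}$ the levels $n(x),n(y)$ are comparable in $\N\cup\{\infty\}$, hence so are the corresponding subspaces $H_{n(x)}$ and $H_{n(y)}$, and therefore their orbit closures. For the last assertion, let $M$ be a non-trivial closed $T$-invariant subspace of $H$ and set $N=\sup\{n(x):x\in M\setminus\{0\}\}$. If $N$ is finite, taking $x_{0}\in M$ with $n(x_{0})=N$ yields $H_{N}=\overline{\mathcal{O}rb}(x_{0},T)\subseteq M$ while $M\subseteq H_{N}$ because $n(x)\leq N$ for every $x\in M$, so $M=H_{N}$; if $N=\infty$ then $M$ contains $H_{n}$ for arbitrarily large $n$ and hence $M\supseteq\overline{\bigcup_{n}H_{n}}=H$. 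In either case $M=\overline{\mathcal{O}rb}(x,T)$ for some $x\in M$, which proves that $T|_{M}$ is hypercyclic. Theorem~\ref{th2} falls out as an immediate corollary, since $HC(T)^{c}=\bigcup_{n}H_{n}$ is then a countable union of proper closed subspaces of $H$, each contained in a closed hyperplane.
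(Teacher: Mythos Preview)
Your plan contains a genuine and fatal gap: the level scheme you describe would \emph{solve the Invariant Subset Problem on Hilbert space}. Indeed, take the bottom level $H_{1}$. By your identity $\overline{\mathcal{O}rb}(x,T)=H_{n(x)}$, every non-zero $x\in H_{1}$ has $n(x)=1$ and hence $\overline{\mathcal{O}rb}(x,T)=H_{1}$. Thus $T|_{H_{1}}$ is a bounded operator on an infinite-dimensional Hilbert space (it is hypercyclic, so $H_{1}$ cannot be finite-dimensional) for which every non-zero vector is hypercyclic; in other words $T|_{H_{1}}$ has no non-trivial closed invariant subset. This is exactly the open problem the paper is careful \emph{not} to claim to solve, and Proposition~\ref{prop2ter} was written precisely to flag this dichotomy: either one produces such a counterexample, or any orbit-unicellular operator of this kind has a family of orbit closures order-isomorphic to $(\R,\leq)$, hence uncountably many. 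Your countable chain $\{0\}\subsetneq H_{1}\subsetneq H_{2}\subsetneq\cdots$ is therefore incompatible with what is known unless you are prepared to resolve that problem. (Incidentally, even the literal statement ``every non-zero vector of $H_{n}$ is hypercyclic for $T|_{H_{n}}$'' already contradicts $H_{n-1}\subsetneq H_{n}$.)

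The paper avoids all of this by never attempting to classify the lattice of orbit closures. Instead it proves the total ordering \emph{pairwise}: given $x,y$ of norm $1$, one looks at the projections $\pi_{[0,\xi_{n}]}x$ and $\pi_{[0,\xi_{n}]}y$ and records, for each $n$, the index $j_{n}(x)$ of the first $e_{j}$-coordinate that is ``large'' in a quantified sense (Fact~\ref{fact2b} guarantees such an index exists for all large $n$). For infinitely many $n$ one of $j_{n}(x)\leq j_{n}(y)$ or $j_{n}(y)\leq j_{n}(x)$ holds; say the former. Lemma~\ref{lem1b} then produces a polynomial $p_{n}$ of controlled modulus with $p_{n}(T_{\xi_{n}})\pi_{[0,\xi_{n}]}x\approx\pi_{[0,\xi_{n}]}y$, and the (b)-fan/(c)-fan machinery of Section~4 (Fact~\ref{fact12}, Fact~\ref{fact13}, Proposition~\ref{prop15}, Lemma~\ref{lem17}) converts this into $\|T^{c_{k,n}}x-T(\pi_{[0,\xi_{n}]}y)\|\to 0$, so $Ty\in\overline{\mathcal{O}rb}(x,T)$ and hence $\overline{\mathcal{O}rb}(y,T)\subseteq\overline{\mathcal{O}rb}(x,T)$. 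The hypercyclicity of $T|_{M}$ for an arbitrary invariant subspace $M$ is then a short topological transitivity argument using the pairwise comparability just proved. No global description of the invariant subspace lattice is needed, and none is claimed.
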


The term ``orbit-unicellularity'' comes from the fact that an
\op\ is said to be \emph{unicellular} if the lattice of its invariant
subspaces is totally ordered. When an operator $T$ is unicellular, every
invariant subspace $M$ of $T$ is cyclic, i.e.
is the closure $M_{x}$ of the linear orbit of some vector $x\in H$, and the
unicellularity of $T$ is equivalent to the fact that for every pair $(x,y)$
of vectors of $H$, either $M_{x}\subseteq M_{y}$ or $M_{y}\subseteq
M_{x}$. See for instance \cite{RR} for some examples of unicellular
\ops. In our case $\overline{\mathcal{O}rb}(x,T)=M_{x}$, so $T$ is in
particular unicellular. Let us underline here that the point of Theorem
\ref{th2bis} is that we are dealing with \hy\ vectors, and not with cyclic
ones: of course there are many \ops\ whose lattice of invariant
subspaces it totally ordered. This is the case for the Volterra \op\ $V$
on $L^{2}([0,1])$ for instance: each invariant subspace for $V$ is of
the form $M_{t}=\{f\in L^{2}([0,1]) \textrm{ ; } f=0 \textrm{ a.e. on } (0,t)
\}$,
$t\in [0,1]$. In this case the lattice of invariant subspaces is
isomorphic to $\R$ with its natural order. It is even possible that the
lattice of invariant subspaces be countable: this is the case for instance for some
weighted unilateral backward shifts on $\ell_{2}(\N)$, the Donoghue
\ops. Here the non trivial invariant subspaces are exactly the finite
dimensional spaces $M_{n}=\textrm{sp}[e_{0},\ldots, e_{n}]$, $n\geq 0$,
where $(e_{n})_{n\geq 0}$ is the canonical basis of $\ell_{2}(\N)$. It
is worth noting that such a situation cannot occur for an \op\ whose
closure of orbits are subspaces and which is orbit-unicellular. Indeed
suppose that $M$ and $N$ are two invariant subspaces for $T$ with $N\subsetneq
M$. As was mentioned in Theorem \ref{th2bis}, there exist two vectors $x$
and $y$ such that $M=\overline{\mathcal{O}rb}(x,T)$ and
$N=\overline{\mathcal{O}rb}(y,T)$. It is easy to see that the \op\
induced by $T$ on the quotient $M/N$ is \hy, which implies that $M/N$ is
infinite-dimensional. Hence the ``gap'' between two invariant subspaces
of $T$, if non trivial, is of infinite dimension. This leads to the
following observation:

\begin{proposition}\label{prop2ter}
The following dichotomy holds true:
\begin{enumerate}
\item[(a)] either there exists a bounded \op\  $T$ on an
infinite-dimensional separable Hilbert space which has no non trivial
invariant closed subset;

\item[(b)] or every \op\ acting on an
infinite-dimensional separable Hilbert space, whose
closure of orbits are subspaces and which is orbit-unicellular, has the
following
property: there exists a family of (closures of) orbits which is
order isomorphic to $(\R, \leq)$. In particular such an \op\ has
uncountably many distinct (closures of) orbits.
\end{enumerate}
\end{proposition}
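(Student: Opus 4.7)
Assume (a) fails: every bounded operator on every separable infinite-dimensional Hilbert space has a non-trivial invariant closed subset. Let $T$ be any operator satisfying the hypotheses of (b). The first step is an \emph{interpolation lemma}: given orbit closures $N\subsetneq M$ of $T$, one can produce a third orbit closure strictly between them. As observed before the proposition, the quotient $M/N$ is an infinite-dimensional separable Hilbert space, so by our assumption the induced operator $\tilde T$ on $M/N$ admits a non-trivial invariant closed subset $F\subsetneq M/N$. Pick a nonzero $\tilde y\in F$, lift it to $y\in M\setminus N$, and set $L=\overline{\mathcal{O}rb}(y,T)$. Continuity of the quotient map $q:M\to M/N$ gives $q(L)\subseteq\overline{\mathcal{O}rb}(\tilde y,\tilde T)\subseteq F\subsetneq M/N$, so $L+N\subsetneq M$ and hence $L\subsetneq M$; since $y\in L\setminus N$, orbit-unicellularity then forces $N\subsetneq L$.

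Fix any nonzero orbit closure $M$, set $M_0=\{0\}$ and $M_1=M$, and enumerate $\Q\cap(0,1)$. Iterated application of the interpolation lemma produces a strictly increasing family $(M_r)_{r\in\Q\cap[0,1]}$ of orbit closures. I then extend this to a $[0,1]$-indexed family by setting, for irrational $t\in(0,1)$,
\[
M_t=\overline{\bigcup_{r\in\Q,\,r<t}M_r},
\]
which is a closed $T$-invariant subspace of $H$. The main obstacle, and the key step, is to show that each such $M_t$ is itself an orbit closure.

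To this end, fix rationals $r_n\nearrow t$, so that $M_t=\overline{\bigcup_n M_{r_n}}$. The union cannot already be closed: if it were, the Baire category theorem applied in the Banach space $M_t$ would yield some $M_{r_n}$ with non-empty interior in $M_t$, which would force $M_{r_n}=M_t$, contradicting the strict monotonicity of the chain. Hence there is some $x\in M_t\setminus\bigcup_n M_{r_n}$, and consequently $x\notin M_r$ for every rational $r<t$. The orbit closure $M_x$ is contained in $M_t$, but $M_x\not\subseteq M_r$ for any rational $r<t$; orbit-unicellularity then yields $M_r\subseteq M_x$ for every such $r$, so $M_t\subseteq M_x$ after taking union and closure. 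Thus $M_x=M_t$ and $M_t$ is an orbit closure.

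It remains to check strict monotonicity: for $t<t'$ in $[0,1]$, inserting rationals $r<r'$ with $t<r<r'<t'$ (and treating the cases where $t$ or $t'$ is rational similarly) gives $M_t\subseteq M_r\subsetneq M_{r'}\subseteq M_{t'}$. The resulting family $(M_t)_{t\in(0,1)}$ is therefore a strictly increasing chain of orbit closures of $T$ indexed by $(0,1)$, which is order-isomorphic to $(\R,\leq)$, establishing (b). The hard step is the one above, where Baire category is combined with orbit-unicellularity to upgrade the closed $T$-invariant subspace $M_t$ to an honest orbit closure; all other ingredients are essentially soft consequences of the interpolation lemma.
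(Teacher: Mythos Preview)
Your proof is correct, and the overall strategy---interpolation between orbit closures, then extension from a dense countable order to a continuum by taking closures of unions---is the same as the paper's. There are two differences worth noting.

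First, your initial setup is simpler: you work directly inside a single nonzero orbit closure $M$ and index by $[0,1]$, whereas the paper first extracts a $\Z$-indexed strictly increasing chain of orbit closures from the decomposition $HC(T)^c=\bigcup_i M_{\alpha_i}$, then fills in dyadic rationals, then extends to all of $\R$. Your choice avoids the initial bookkeeping.

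Second, and more substantively, the Baire step you single out as the ``hard'' one is not actually needed. The paper establishes, in the last paragraph of the proof of Theorem~\ref{th2bis}, that under precisely the hypotheses of (b)---orbit closures are subspaces and $T$ is orbit-unicellular---the operator induced by $T$ on \emph{any} closed invariant subspace is topologically transitive, hence hypercyclic; so every closed invariant subspace is automatically an orbit closure. Given this, once you know that $M_t=\overline{\bigcup_{r<t}M_r}$ is a closed $T$-invariant subspace, you are done: it is an orbit closure for free. Your Baire-category argument is a pleasant self-contained alternative that avoids invoking this fact, but it is extra work rather than the key step.
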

\par\smallskip
In Section $6$ we give a positive answer to a question of \cite{HRR}
which concerns orbit-reflexive \ops\ on Hilbert spaces. If $T\in
\mathcal{B}(X)$ is a bounded \op\ on $X$, $T$ is said to be
\emph{orbit-reflexive} if whenever $A\in \mathcal{B}(X)$ is such that
$Ax$ belongs to the closure of $\mathcal{O}rb(x,T)$ for every $x\in X$,
then $A$ must belong to the closure of the set $\{T^{n}\textrm{ ; } n
\geq 0\}$ for the Strong Operator Topology (SOT). In particular,
$A$ and $T$ must commute. Various conditions are given in \cite{HRR}
under which an \op\ on a Hilbert space is orbit-reflexive: for instance
any contraction on a Hilbert space is orbit-reflexive. The following
question is asked in \cite{HRR}: does there exist an \op\ on a Hilbert
space which is not orbit-reflexive? This question was pointed out to us
by Vladimir M\"uller \cite{M}. We answer it here in the affirmative:

\begin{theorem}\label{th3}
There exists a bounded \op\ on a Hilbert space which is not
orbit-reflexive.
\end{theorem}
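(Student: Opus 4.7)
The plan is to build on the operator $T$ of Theorem~\ref{th2bis}, or a minor variant of it, and to exhibit a bounded operator $A$ on $H$ that preserves every orbit closure of $T$ yet is not the SOT-limit of any subsequence of $\{T^{n}:n\geq 0\}$. The key reduction is a standard remark: every operator in the SOT-closure of $\{T^{n}\}$ automatically commutes with $T$, since $T^{n_{k}}x\to Ax$ for each $x$ yields $T(Ax)=\lim T^{n_{k}+1}x=A(Tx)$. So it suffices to produce a bounded $A$ on $H$ such that $Ax\in\overline{\mathcal{O}rb}(x,T)$ for every $x$ and $AT\neq TA$.

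By Theorems~\ref{th1} and~\ref{th2bis}, each orbit closure is a $T$-invariant closed subspace, and the family of these subspaces forms a chain that exhausts the invariant subspace lattice of $T$. The condition $Ax\in\overline{\mathcal{O}rb}(x,T)$ for every $x\in H$ is therefore equivalent to $A\in\mathrm{Alg}\,\mathrm{Lat}(T)$: for $y$ in a closed $T$-invariant subspace $M$ one has $\overline{\mathcal{O}rb}(y,T)\subseteq M$, so the pointwise containment forces $Ay\in M$; the converse is immediate. The problem reduces to producing an element of $\mathrm{Alg}\,\mathrm{Lat}(T)\setminus\{T\}'$.

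The construction of $A$ is carried out inside Read's scheme, on the ladder of basis-type vectors $(e_{j})$ around which $T$ is built. One prescribes $Ae_{j}=p_{j}(T)e_{j}$ for suitable polynomials $p_{j}$, chosen so that (i) $Ae_{j}$ lies in $\overline{\mathcal{O}rb}(e_{j},T)$ by construction; (ii) the linear map $A$ extends to a bounded operator on $H$; (iii) there is an index $j_{0}$ with $(TA-AT)e_{j_{0}}\neq 0$. The chain structure of the orbit closures, combined with the continuity of $A$, then promotes (i) to the full condition $Ax\in\overline{\mathcal{O}rb}(x,T)$ for every $x\in H$: any $x$ is approximated by finite linear combinations $\sum_{j}\alpha_{j}e_{j}$ whose orbit closure coincides with the largest $\overline{\mathcal{O}rb}(e_{j},T)$ appearing. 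The non-commutation (iii), once secured on a single $e_{j_{0}}$, shows via the first paragraph that $A$ falls outside the SOT-closure of $\{T^{n}\}$, which is exactly the failure of orbit-reflexivity.

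The main obstacle is (ii): producing polynomials $p_{j}$ that yield a globally bounded operator $A$ while still realizing the failure of commutation in (iii). This is the quantitative heart of the argument, and it requires exploiting the lay-off intervals together with the freedom built into the (b)-fan and (c)-fan of Read's construction to engineer cancellations between successive levels, in the same spirit as in the proofs of Theorems~\ref{th1} and~\ref{th2bis}. The algebraic condition in (iii) is the easiest ingredient once the bounded $A$ is in hand, since only one finite-dimensional calculation on $e_{j_{0}}$ has to be controlled.
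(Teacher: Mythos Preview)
Your overall strategy is sound: any SOT-limit of $\{T^{n}\}$ commutes with $T$, so it suffices to produce a bounded $A$ with $Ax\in\overline{\mathcal{O}rb}(x,T)$ for every $x$ but $AT\neq TA$, and your reformulation via $\mathrm{Alg}\,\mathrm{Lat}(T)$ is correct once orbit closures are subspaces forming a chain. But the construction is left entirely schematic, and the step promoting condition (i) from the $e_{j}$ to arbitrary $x$ contains a real gap. Grant for a moment your claim that for a finite sum $x_{N}=\sum_{j\in F}\alpha_{j}e_{j}$ one has $\overline{\mathcal{O}rb}(x_{N},T)=\max_{j\in F}\overline{\mathcal{O}rb}(e_{j},T)$ (only the inclusion $\subseteq$ follows from the chain property; the reverse is unjustified). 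Then indeed $Ax_{N}=\sum_{j}\alpha_{j}p_{j}(T)e_{j}\in\overline{\mathcal{O}rb}(x_{N},T)$. Passing to the limit gives only $Ax\in\overline{\bigcup_{N}\overline{\mathcal{O}rb}(x_{N},T)}$; since $e_{j+1}=Te_{j}$ forces $\overline{\mathcal{O}rb}(e_{j},T)\supseteq\overline{\mathcal{O}rb}(e_{j+1},T)$, this union is $\overline{\mathcal{O}rb}(e_{j_{0}},T)$ for the smallest index $j_{0}$ in the support of $x$, which for the operator of Theorem~\ref{th2bis} is typically all of $H$. You end with $Ax\in H$, not $Ax\in\overline{\mathcal{O}rb}(x,T)$. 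The chain structure alone does not close this gap, and with the polynomials $p_{j}$ unspecified and boundedness unaddressed there is no concrete object to analyse.

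The paper's route is far more direct. One takes the operator of Section~4 but requires every polynomial $p_{k,n}$ in the (c)-fan to satisfy $p_{k,n}(0)=0$. This sacrifices hypercyclicity of $T$, but it forces the definition of $f_{j}$ for $j\geq 1$ to involve only $e_{i}$ with $i\geq 1$. The witness is then the simplest possible perturbation of $T$: set $Ae_{0}=0$ and $Ae_{j}=e_{j+1}=Te_{j}$ for $j\geq 1$. Boundedness is immediate since $Af_{j}=Tf_{j}$ for every $j\geq 1$, and non-commutation is checked on $e_{0}$: $ATe_{0}=e_{2}\neq 0=TAe_{0}$. For the orbit condition one splits into two cases: if $\langle x,e_{0}\rangle=0$ then $Ax=Tx$; if $\langle x,e_{0}\rangle\neq 0$ then the argument of Proposition~\ref{prop16} (which still applies because the polynomial $q$ appearing there is already divisible by $\zeta$) yields $e_{1}\in\overline{\mathcal{O}rb}(x,T)$, hence the hyperplane $H_{0}=\overline{\mathrm{sp}}[f_{j}:j\geq 1]$ lies in $\overline{\mathcal{O}rb}(x,T)$, and $Ax\in H_{0}$ by construction. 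No appeal to orbit-unicellularity is needed.
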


Theorem \ref{th3} follows from a slight modification of the construction
of Section $4$. After this paper was submitted for publication, we were
informed  by Vladimir M\"uller that a much more simple example of
a non orbit-reflexive Hilbert
space \op\ was constructed independently in
\cite{MV}, as well as an example of an \op\ on the space $\ell_{1}(\N)$
which is reflexive but not orbit-reflexive.
\par\smallskip
We finish this introduction with a comment: we have mentioned previously
that the proofs of Theorems \ref{th1}, \ref{th2} and \ref{th3} involve
\ops\ of Read's type, and we use the (c)-part and the (b)-part of it. The
reader may justly ask about a possible (a)-part: such an (a)-part indeed
appears in Read's constructions in \cite{R}, \cite{R2} or \cite{R3}, and
it is actually the part which provides the vectors which belong to the
closures of all the sets $\overline{\mathcal{O}rb}(x,T)$.

\section{Making all orbits into subspaces: the role of the (c)-fan}
We start from the Hilbert space $\ell_{2}(\N)$ of square-summable
sequences indexed by the set $\N$ of nonnegative integers, with its
canonical basis $(e_{j})_{j\geq 0}$. A vector $x$ of $\ell_{2}(\N)$ is as
usual said to be \emph{finitely supported} if all but finitely many of
its coordinates on the basis $(e_{j})_{j\geq 0}$ vanish, and the set of
finitely supported vectors will be denoted by $c_{00}$. The forward shift
$T$ on $\ell_{2}(\N)$ is the \op\ defined by $Te_{j}=e_{j+1}$ for every
$j\geq 0$.
\par\smallskip
If $(f_{j})_{j\geq 0}$ is a sequence of finitely supported vectors such
that $f_{0}=e_{0}$ and $\textrm{sp}[f_{0}, \ldots,
f_{j}]=\textrm{sp}[e_{0},\ldots, e_{j}]$ for every $j\geq 1$ ($f_{j}$
belongs to $\textrm{sp}[e_{0},\ldots, e_{j}]$ and the $j^{th}$ coordinate
of $f_{j}$ on the basis $(e_{j})_{j\geq 0}$ is non-zero), then one can
define on $c_{00}$ a new norm associated to the sequence $(f_{j})_{j \geq
0}$. For any finite subset $J$ of $\N$ and any collection $(x_{j})_{j\in
J}$ of scalars,
$$||\sum_{j\in J}
x_{j}f_{j}||=\left(\sum_{j\in J}|x_{j}|^{2}\right)^{\frac {1}{2}}.$$ The
completion of $c_{00}$ under this
 new norm is a Hilbert space, with
the sequence $(f_{j})_{j\geq 0}$ as an orthonormal basis. We are going to
show that for a suitable choice of the sequence $(f_{j})_{j\geq 0}$, the
\op\ $T$ acting on $c_{00}$ extends to a bounded \op\ on the Hilbert
space $H:=H_{(f_{j})}$ which satisfies the properties of Theorem
\ref{th1}.
\par\smallskip
We denote by $ \K[\zeta ]$ the space of polynomials with coefficients in
$\K=\R$ or $\C$, and by $\K_{d}[\zeta ]$ the space of polynomials of
degree at most $d$. For $p\in\K[\zeta ]$, $p(\zeta
)=\sum_{k=0}^{d}a_{k}\zeta
^{k}$, we write as usual $|p|=\sum_{k=0}^{d}|a_{k}|$.
Let $(d_{n})_{n\geq 1}$ be an increasing sequence of positive integers,
and for every $n\geq 1$ let $(p_{k,n})_{1\leq k\leq k_{n}}$ be a finite
family of polynomials of degree at most $d_{n}$ with $|p_{k,n}|\leq 2$
for every $1\leq k\leq k_{n}$. In the proofs of the theorems, the
polynomials $p_{k,n}$ will have to satisfy some additional properties,
the most usual one being that the family $(p_{k,n})_{1\leq k\leq k_{n}}$
forms an $\varepsilon
_{n}$-net of the ball of radius $2$ of $\K_{d_{n}}[\zeta ]$, but
since these families will be chosen differently in the proofs of the
four theorems, we present for the time being the general construction.
\par\smallskip
The construction of the vectors $f_{j}$, $j\geq 0$, is to be done by
induction, starting from $f_{0}=e_{0}$. At step $n$, vectors $f_{j}$
 will be constructed for $j\in
[\xi _{n}+1,\xi_{n+1}]$, where $(\xi _{n})_{n\geq 0}$ is a sequence with
$\xi _{0}=0$ which will be chosen to grow very fast. We emphasize that
all the constants we are going to construct at step $n$ are determined
by the various constants which are constructed through steps $0$ to
$n-1$. When we say that a certain constant $C_{\xi _{n}}$
depends only on $\xi _{n}$, it means that it depends only on the
construction from steps $0$ to $n-1$. The construction is done by induction on $n$, and in all our
statements we assume that the construction has been carried out until
step $n-1$.
\par\smallskip
There will be two different types of definitions of $f_{j}$ for $j\in
[\xi _{n}+1,\xi _{n+1}]$, depending on whether $j$ belongs or not to a
collection of intervals called the \emph{fan} (we will later on call it the
\emph{(c)-fan}, to distinguish it from another fan which is going to be
introduced afterwards): this fan is a
lattice of intervals which we call
\emph{working
intervals}, and their role is to ensure that every orbit is a linear
manifold. The intervals between working intervals we call \emph{\loi s}:
on a \loi, $f_{j}$ is defined as
 $f_{j}=\lambda _{j}e_{j}$, where $\lambda _{j}$
is a scalar coefficient which is very large if $j$ belongs to the
beginning of the \loi, and very small if $j$ belongs to its end, while
the quotient $(\lambda _{j}/\lambda _{j+1} )$ is very close to $1$.  Thus
when both $j$ and $j+1$ belong to a \loi, $Tf_{j}=\lambda
_{j}e_{j+1}=(\lambda _{j}/\lambda _{j+1} )f_{j+1}$ and $T$ acts as a
weighted shift. So in a sense, ``nothing much happens on the \loi s'',
which explains their name. Their role is to prevent ``side effects'' from
the working intervals, which do the real work.
\par\smallskip
Here are now the precise definition of the vectors $f_{j}$, $j=\xi
_{n}+1, \ldots, \xi _{n+1}$.
 For any finite
sub-interval $A$ of $\N$, we denote by $\pi_{A}$ the projection of
$c_{00}$ onto the span of the vectors $f_{j}$, $j\in A$. Since we will
always require that $\textrm{sp}[f_{0}, \ldots,
f_{j}]=\textrm{sp}[e_{0},\ldots, e_{j}]$, $x$ belongs to $c_{00}$ \ifff\
it is finitely supported in $H$
\wrt\ $(f_{j})_{j\geq 0}$. When we talk of support in the sequel, we
will always mean \wrt\ $(f_{j})_{j\geq 0}$: $x$ is supported in $A$ if
$x=\sum_{j\in A} x_{j}f_{j}$. The norm $||\, .\, ||$ is the norm of $H$.

\subsection{Construction of the fan}
Let $c_{1,n}<c_{2,n}<\dots<c_{k_{n},n}$ be an extremely fast increasing
sequence of integers with $c_{1,n}$ very large \wrt\ $\xi
_{n}$. The fan consists of the lattice of all the intervals
$$I_{r_{1},r_{2}, \dots, r_{k_{n}}}=[r_{1}c_{1,n}+r_{2}
c_{2,n}+\dots+r_{k_{n}}c_{k_{n},n}, r_{1}c_{1,n}+r_{2}
c_{2,n}+\dots+r_{k_{n}}c_{k_{n},n}+\xi _{n}],$$ where $r_{1}, \dots,
r_{k_{n}}$ are nonnegative integers belonging to $[0,h_{n}]$. Here
$h_{n}$ is a very large integer depending only on $\xi _{n}$, but not on
the $c_{k,n}$'s, which will be chosen later on in the proof. If the gaps
between the different $c_{k,n}$'s are large enough, all these
$k_{n}h_{n}$ intervals are disjoint. For $k\in[1,k_{n}]$, we call $r_{k}$
the $k^{th}$ coordinate of the interval $I_{r_{1},r_{2}, \dots,
r_{k_{n}}}$, and write $|r|=r_{1}+\dots+r_{k_{n}}$.
\par\smallskip
Let $t\in [1,k_{n}]$ be the largest integer such that $r_{t}\geq 1$. We
will write $I_{r_{1},r_{2}, \dots, r_{k_{n}}}=I_{r_{1},r_{2}, \dots,
r_{t}}$ when there is no risk of confusion. For $j\in I_{r_{1},r_{2},
\dots, r_{t}}$, we define $f_{j}$ to be
$$f_{j}=\frac {1}{\gamma _{n}}4^{1-|r|}(e_{j}-p_{t,n}(T)e_{j-c_{t,n}}),$$
where $\gamma _{n}$ is a very small positive number depending only on
$\xi _{n}$ which will be chosen in the sequel. The interest of this
definition is twofold: first of all, we can already justify the name of
working interval, simply by using the definition of $f_{j}$ for $j\in
I_{0,0, \dots, r_{k}}$ with $r_{k}=1$:

\begin{fact}\label{fact4}
Let $\delta _{n}$ be a small positive number. If $\gamma _{n}$ is small
enough, then for every $x$ supported in $[0,\xi _{n}]$ and every $1\leq k
\leq k_{n}$,
$$||T^{c_{k,n}}x-p_{k,n}(T)x||\leq \delta _{n}||x||.$$
\end{fact}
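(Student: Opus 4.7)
The plan is to exploit the defining identity for the $f_j$'s on the first layer of the fan, namely the working interval $I_{0,\ldots,0,r_k}$ with $r_k=1$, which is precisely $[c_{k,n}, c_{k,n}+\xi_n]$. For every $j'$ in this interval, only the $k$-th coordinate of $r$ is nonzero, so $t=k$, $|r|=1$, and the definition reads
$$f_{j'}=\frac{1}{\gamma_n}\bigl(e_{j'}-p_{k,n}(T)e_{j'-c_{k,n}}\bigr),$$
which I will rewrite as $e_{j'}=\gamma_n f_{j'}+p_{k,n}(T)e_{j'-c_{k,n}}$. In other words, each vector of the form $T^{c_{k,n}}e_j-p_{k,n}(T)e_j$ with $j\in[0,\xi_n]$ is exactly $\gamma_n f_{j+c_{k,n}}$.

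From here the computation is almost automatic. First I expand $x$ in the canonical basis: since $\mathrm{sp}[f_0,\ldots,f_{\xi_n}]=\mathrm{sp}[e_0,\ldots,e_{\xi_n}]$, any $x$ supported (w.r.t.\ $(f_j)$) in $[0,\xi_n]$ can be written $x=\sum_{j=0}^{\xi_n}\tilde x_j e_j$. Applying $T^{c_{k,n}}$ shifts to $\sum \tilde x_j e_{j+c_{k,n}}$, and substituting the identity above gives
$$T^{c_{k,n}}x-p_{k,n}(T)x=\gamma_n\sum_{j=0}^{\xi_n}\tilde x_j\,f_{j+c_{k,n}}.$$
The indices $j+c_{k,n}$, $0\le j\le \xi_n$, are pairwise distinct and lie in a single working interval, so the vectors $f_{j+c_{k,n}}$ are members of the orthonormal basis $(f_j)_{j\ge 0}$; hence
$$\bigl\|T^{c_{k,n}}x-p_{k,n}(T)x\bigr\|^{2}=\gamma_n^{2}\sum_{j=0}^{\xi_n}|\tilde x_j|^{2}.$$

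It remains to compare $\sum|\tilde x_j|^{2}$ with $\|x\|^{2}$. This is the only nontrivial step, and it is harmless: the two norms $x\mapsto\|x\|$ (the Hilbert norm coming from $(f_j)$) and $x\mapsto(\sum|\tilde x_j|^{2})^{1/2}$ (the Euclidean norm in the basis $(e_j)$) are two norms on the finite-dimensional space $\mathrm{sp}[e_0,\ldots,e_{\xi_n}]$, and the linear change of basis from $(e_j)_{j\le\xi_n}$ to $(f_j)_{j\le\xi_n}$ was determined entirely by the construction through steps $0$ to $n-1$. Consequently there is a constant $C_{\xi_n}$, depending only on $\xi_n$ in the sense of the paper, such that $\sum_{j\le\xi_n}|\tilde x_j|^{2}\le C_{\xi_n}^{2}\|x\|^{2}$.

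Combining the two displays yields $\|T^{c_{k,n}}x-p_{k,n}(T)x\|\le\gamma_n C_{\xi_n}\|x\|$, and the conclusion follows by choosing $\gamma_n\le\delta_n/C_{\xi_n}$, which is admissible because $C_{\xi_n}$ was fixed before $\gamma_n$. No genuine obstacle arises; the only point that requires a moment of care is to observe that the comparison constant $C_{\xi_n}$ is available in advance, since otherwise one could not shrink $\gamma_n$ against it.
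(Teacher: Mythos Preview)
Your proof is correct and follows essentially the same approach as the paper: expand $x$ in the $(e_j)$-basis, use the defining identity $f_{j+c_{k,n}}=\gamma_n^{-1}(e_{j+c_{k,n}}-p_{k,n}(T)e_j)$ on the working interval $[c_{k,n},c_{k,n}+\xi_n]$ to identify $T^{c_{k,n}}x-p_{k,n}(T)x$ with $\gamma_n\sum \tilde x_j f_{j+c_{k,n}}$, and then invoke the equivalence of the $(e_j)$- and $(f_j)$-norms on $F_{\xi_n}$ to produce the constant $C_{\xi_n}$. Your emphasis that $C_{\xi_n}$ is determined before $\gamma_n$ is exactly the point the paper also makes.
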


\begin{proof}
Since $\textrm{sp}[e_{0},\ldots, e_{\xi _{n}}]=
\textrm{sp}[f_{0},\ldots, f_{\xi _{n}}]$,
we can write any vector $x$ with support in $[0,\xi _{n}]$ as
$x=\sum_{j=0}^{\xi _{n}}\alpha
_{j}^{(n)}e_{j}$. Then $T^{c_{k,n}}x=\sum_{j=0}^{\xi _{n}}\alpha
_{j}^{(n)}e_{j+c_{k,n}}$. Now $j+c_{k,n}$
belongs to the working interval $[c_{k,n},c_{k,n}+\xi
_{n}]$, so $f_{j+c_{k,n}}=
{\gamma _{n}}^{-1}(e_{j+c_{k,n}}-p_{k,n}(T)e_{j})$. Hence
$$T^{c_{k,n}}x=\gamma _{n}\sum_{j=0}^{\xi _{n}}\alpha _{j}^{(n)}
f_{j+c_{k,n}}+p_{k,n}(T)\sum_{j=0}^{\xi _{n}}
\alpha _{j}^{(n)}e_{j},$$ that is $$||T^{c_{k,n}}x-p_{k,n}
(T)x||=||\gamma _{n}\sum_{j=0}^{\xi _{n}}\alpha
_{j}^{(n)}f_{j+c_{k,n}}||\leq \gamma _{n}
(\sum_{j=0}^{\xi _{n}}|\alpha _{j}^{(n)}|^{2})^{\frac {1}{2}}.$$ On the
space $F_{\xi _{n}}=\textrm{sp}[f_{0},\ldots, f_{\xi
_{n}}]$, the two norms $||x||_{0}=(\sum_{j=0}^{\xi _{n}}|
\alpha _{j}^{(n)}|^{2})^{\frac {1}{2}}$
 and $||x||$ are equivalent, so there
 exists a constant $C_{\xi _{n}}$
depending only on $\xi _{n}$ such that $||x||_{0}\leq C_{\xi _{n}}||x||$
for every $x$ supported in $[0,\xi _{n}]$. Thus
$||T^{c_{k,n}}x-p_{k,n}(T)x||\leq \gamma _{n}C_{\xi _{n}}||x||
\leq \delta _{n}||x||$ if $\gamma _{n}$ is small enough.
\end{proof}

Hence if the collection $(p_{k,n})$ is ``sufficiently dense'' among
polynomials with $|p|\leq 2$, Fact \ref{fact4} gives that the orbit of
the vector $x=e_{0}$ (and hence of any finitely supported vector $x$)
contains in its closure any vector $p(T)x$ with $|p|\leq 2$.
 In order to obtain this result for every vector, not only
finitely supported ones, one clearly needs to control the behaviour of
the quantities $||T^{c_{k,n}}(x-\pi_{[0,\xi_{n}]}x)||$ (and then to
dispense with the condition $|p|\leq 2$, but this is not difficult). More
precisely, we will need the following proposition, which we shall prove
in Section $3$:

\begin{proposition}\label{prop2}\label{prop5}
For every $n\geq 1$, every $1\leq k \leq k_{n}$ and every $x\in H$ such
that $\pi_{[0,\xi _{n}]}x=0$, $||T^{c_{k,n}}x||\leq 100\,||x||$. In other
words, $$||T^{c_{k,n}}(x-\pi_{[0,\xi _{n}]}x)||\leq \,100 ||x-\pi_{[0,\xi
_{n}]}x|| $$ for every $x\in H$.
\end{proposition}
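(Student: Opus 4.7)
The plan is to prove the bound by decomposing $x$ according to the $(c)$-fan structure of step $n$ and estimating $T^{c_{k,n}}$ block by block. Concretely, I would write $x = x^{\mathrm{w}} + x^{\mathrm{loi}} + x^{\mathrm{tail}}$, where $x^{\mathrm{w}}$ is the $(f_j)$-projection of $x$ on the working intervals $I_{r_1,\dots,r_{k_n}}$ of step $n$ (with $r \neq 0$), $x^{\mathrm{loi}}$ is its projection on the lay-off intervals of step $n$, and $x^{\mathrm{tail}}$ is its projection on indices $j > \xi_{n+1}$ constructed at later steps. The guiding principle is that the integers $c_{1,n} < \dots < c_{k_n,n}$ are chosen with very large gaps compared to $\xi_n$, so $T^{c_{k,n}}$ sends each block of support into a well-identified, disjoint target block and the resulting estimates combine as an almost-orthogonal sum.

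The core algebraic step is the computation of $T^{c_{k,n}} f_j$ for $j$ in a working interval $I_r = I_{r_1,\dots,r_t}$ with $r_t \geq 1$. Applying $T^{c_{k,n}}$ to the formula $f_j = \gamma_n^{-1} 4^{1-|r|}(e_j - p_{t,n}(T) e_{j-c_{t,n}})$ and comparing with the corresponding formula for $f_{j+c_{k,n}}$ in the shifted interval $I_{r'}$, I would distinguish two sub-cases. If $k \leq t$ the largest nonzero coordinate of $r'$ is still $t$, the two defining formulas match up, and one obtains the clean identity $T^{c_{k,n}} f_j = 4\, f_{j+c_{k,n}}$; since distinct source $r$'s give distinct target $r'$'s, this piece contributes exactly $4\,\|x^{\mathrm{w},\, k \leq t}\|$. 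If $k > t$ the largest coordinate of $r'$ jumps to $k$ and a residual of the form $\gamma_n^{-1} 4^{1-|r|}\bigl[p_{k,n}(T) - T^{c_{k,n}-c_{t,n}} p_{t,n}(T)\bigr] e_j$ appears; as in the proof of Fact~\ref{fact4}, I would absorb it by using $|p_{k,n}|,|p_{t,n}| \leq 2$, estimating $\|e_j\|$ iteratively from the working-interval formulas (which yields a bound depending only on $\xi_n$), and then choosing $\gamma_n$ small enough to make the residual negligible.

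For $x^{\mathrm{loi}}$, where $f_j = \lambda_j e_j$ with $\lambda_{j+1}/\lambda_j$ close to $1$, the identity $T^{c_{k,n}} f_j = (\lambda_j/\lambda_{j+c_{k,n}}) f_{j+c_{k,n}}$ holds when the target lies in another lay-off slot; when it falls inside a working interval of step $n$ or of a later step, I re-expand $e_{j+c_{k,n}}$ in the $(f_\ell)$-basis, using the wide gaps between the $c_{\ell,n}$ to guarantee only a uniformly bounded overlap. For $x^{\mathrm{tail}}$ the shift $c_{k,n}$ is negligible compared with all the step-$m$ parameters used for $m > n$, so the same block-by-block analysis carried out at step $m$ shows that $T^{c_{k,n}}$ acts on $x^{\mathrm{tail}}$ essentially as a weighted shift of norm close to $1$. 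The main obstacle I expect is the combinatorial bookkeeping: one must simultaneously verify that the forward images of distinct source blocks are disjoint (so the block bounds combine as a Pythagorean sum with no counting loss) and that all the residual terms arising from the $k > t$ case and from lay-off/working or step-$n$/step-$m$ cross-interactions can be made small by appropriate choice of $\gamma_n$, $\lambda_j$, $h_n$ and the spacings between the $c_{\ell,n}$. The generous constant $100$ is precisely there to absorb these various contributions: the main term already yields $4\,\|x^{\mathrm{w}}\|$, the lay-off and tail pieces add a few times $\|x^{\mathrm{loi}}\|+\|x^{\mathrm{tail}}\|$, and every residual is made arbitrarily small, so the sum sits comfortably below $100\,\|x\|$.
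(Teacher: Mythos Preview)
Your overall block decomposition is close to what the paper does (it factors the same computation through an intermediate Proposition giving estimates (2a), (2b), (3) on each interval $[\xi_l+1,\xi_{l+1}]$), but there are two genuine gaps in your treatment of the working intervals.

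First, in the case $k\leq t$ you claim the clean identity $T^{c_{k,n}}f_j=4f_{j+c_{k,n}}$. This is only true when $r_k<h_n$. When $r_k=h_n$ the shifted index $j+c_{k,n}$ falls \emph{outside} the fan into a lay-off interval, and the identity breaks down. The sub-case $r_k=h_n$, $k<t$ is harmless (the target lies at the start of a long lay-off interval and the norm is tiny), but the sub-case $r_k=h_n$, $k=t$ is the delicate one: here $T^{c_{k,n}}f_j=\gamma_n^{-1}4^{1-|r|}(e_{j+c_{k,n}}-p_{k,n}(T)e_j)$ and one must expand $e_j$ via Lemma~\ref{lem7} and sum a geometric-type series in the $2^{-s_l}$'s. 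This is precisely where the parameter $h_n$ enters the argument (one needs $k_n2^{-h_n}$ small), and your proposal never uses $h_n$ at all.

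Second, your plan for the $k>t$ residual is wrong. You write the residual as $\gamma_n^{-1}4^{1-|r|}[\cdots]e_j$ and propose to kill it by choosing $\gamma_n$ small. But for $j$ in a working interval, $\|e_j\|$ itself carries a factor $\gamma_n$ (again by Lemma~\ref{lem7}), so the $\gamma_n^{-1}$ cancels and the residual is of order~$1$, not negligible. The paper instead pushes the algebra one step further and obtains the exact identity
\[
T^{c_{k,n}}f_j=4f_{j+c_{k,n}}+p_{k,n}(T)f_j-p_{t,n}(T)f_{j+c_{k,n}-c_{t,n}},
\]
which expresses the residual directly in the $(f_j)$-basis; each of the two extra terms then contributes roughly $2\bigl(\sum_j|x_j|^2\bigr)^{1/2}$, bounded but not small. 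This is why the constant in the proposition is $100$ rather than $4+\varepsilon$: the $k>t$ and $r_k=h_n$ cases each add genuine $O(1)$ contributions that cannot be absorbed into $\gamma_n$.
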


Only the intervals $I_{0,\dots,0, 1}$ are needed for the proof of Fact
\ref{fact4}, but for the estimates of Proposition \ref{prop5} one needs
the whole lattice, and this is why all the other intervals, which could
be called ``shades'' of the basic intervals $I_{0,\dots,0, 1}$, appear in
the definition of the fan.
\par\smallskip
We finish this section by showing how $e_{j}$ can be computed for $j$ in
a working interval by going down the lattice along each successive
coordinate:

\begin{lemma}\label{lem7}
For every $\alpha \in [0,\xi _{n}]$ and every $k_{n}$-tuple
$(r_{1},\dots, r_{k_{n}})$ of integers in $[0,h_{n}]$,
\begin{eqnarray*}
e_{r_{1}c_{1,n}+\dots+r_{t}c_{t,n}+\alpha }&=&\Bigl(\sum_{l=1}^{t}
\sum_{s_{l}=0}^{r_{l}}
\gamma _{n} 4^{r_{1}+\dots r_{l-1}+(r_{l}-s_{l})-1}
p_{l,n}(T)^{s_{l}}p_{l+1,n}(T)^{r_{l+1}}\dots p_{t,n}(T)^{r_{t}} \\ &&
f_{r_{1}c_{1,n}+\dots+
r_{l-1}c_{l-1,n}+(r_{l}-s_{l})c_{l,n}+\alpha}\Bigr) +
p_{1,n}(T)^{r_{1}}\dots p_{t,n}(T)^{r_{t}}e_{\alpha },
\end{eqnarray*}
 where $t$ is the
largest index such that $r_{t}\geq 1$.
\end{lemma}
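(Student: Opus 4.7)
The plan is to unfold the definition of the vectors $f_{j}$ into a single recursion on the index and iterate it level by level. Solving the defining relation
$$f_{j}=\frac{1}{\gamma_{n}}4^{1-|r|}\bigl(e_{j}-p_{t,n}(T)e_{j-c_{t,n}}\bigr),$$
valid for $j\in I_{r_{1},\dots,r_{t}}$ with $r_{t}\geq 1$, for $e_{j}$ yields the single key identity
$$e_{j}=\gamma_{n}\,4^{|r|-1}f_{j}+p_{t,n}(T)\,e_{j-c_{t,n}}.\qquad(\star)$$
This is the only identity that will be used; everything else is bookkeeping, and one exploits that all the operators $p_{l,n}(T)$ and $T^{k}$ commute.

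Starting from the index $r_{1}c_{1,n}+\dots+r_{t}c_{t,n}+\alpha$, apply $(\star)$ to strip off one unit of the $t^{\text{th}}$ coordinate, producing an $f$-term plus $p_{t,n}(T)$ applied to the expression at the index $r_{1}c_{1,n}+\dots+(r_{t}-1)c_{t,n}+\alpha$. Iterating $r_{t}$ times keeps the first $t-1$ coordinates fixed and peels the last one down to $0$, producing an expression of the form
\begin{align*}
e_{r_{1}c_{1,n}+\dots+r_{t}c_{t,n}+\alpha}=&\sum_{s_{t}}\gamma_{n}\,4^{r_{1}+\dots+r_{t-1}+(r_{t}-s_{t})-1}\,p_{t,n}(T)^{s_{t}}\,f_{r_{1}c_{1,n}+\dots+r_{t-1}c_{t-1,n}+(r_{t}-s_{t})c_{t,n}+\alpha}\\
&+p_{t,n}(T)^{r_{t}}\,e_{r_{1}c_{1,n}+\dots+r_{t-1}c_{t-1,n}+\alpha}.
\end{align*}
The residual $e$-term now sits at an index whose last positive coordinate is strictly less than $t$, and the same procedure applies verbatim to it by invoking $(\star)$ at the corresponding lower level. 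Each pass contributes a fresh sum indexed by $s_{l}$ and multiplies the accumulated polynomial prefactor by $p_{l,n}(T)^{s_{l}}$, while a new residual $e$-term of strictly smaller level is produced.

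The cleanest organization is a downward induction on the level, or equivalently an induction on $|r|=r_{1}+\dots+r_{t}$, whose inductive step is a single application of $(\star)$. Once the recursion has consumed all the coordinates down to level $1$, the only remaining $e$-term is $p_{1,n}(T)^{r_{1}}\cdots p_{t,n}(T)^{r_{t}}\,e_{\alpha}$, matching the tail of the stated formula. I do not anticipate any conceptual obstacle; the one thing to watch is the bookkeeping of exponents, namely that at every stage the exponent $r_{1}+\dots+r_{l-1}+(r_{l}-s_{l})-1$ is exactly $|r'|-1$ for the coordinate vector $r'$ of the working interval containing the $f$-vector appearing in that term. This is precisely the consistency condition that makes successive applications of $(\star)$ assemble into the formula of Lemma~\ref{lem7}.
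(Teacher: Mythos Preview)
Your approach is exactly the one the paper takes: solve the defining relation for $e_{j}$ to get the recursion $(\star)$, iterate it $r_{t}$ times to strip off the $t^{\text{th}}$ coordinate, then repeat at levels $t-1,\dots,1$ until only the residual $p_{1,n}(T)^{r_{1}}\cdots p_{t,n}(T)^{r_{t}}e_{\alpha}$ remains. The paper's proof is just a two-line sketch of this same unwinding, so your more explicit bookkeeping is entirely appropriate.
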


\begin{proof}
We have
\begin{eqnarray*}
e_{r_{1}c_{1,n}+\dots+r_{t}c_{t,n}+\alpha }&=&\gamma _{n}4^{|r|-1}
f_{r_{1}c_{1,n}+\dots+ r_{t}c_{t,n}+\alpha
}+p_{t,n}(T)e_{r_{1}c_{1,n}+\dots+ (r_{t}-1)c_{t,n}+\alpha }=\dots\\
&=&\gamma _{n}4^{|r|-1}\sum_{s_{t}=0}^{r_{t}}4^{-s_{t}}
p_{t,n}(T)^{s_{t}} f_{r_{1}c_{1,n}+\dots+ (r_{t}-s_{t})c_{t,n}+\alpha }\\
&+&
 p_{t,n}(T)^{r_{t}}e_{r_{1}c_{1,n}+\dots+
+r_{t-1}c_{t-1,n}+\alpha }.
\end{eqnarray*}
Then we go down in the same way along the $(t-1)$-coordinate, etc... until
there are no more coordinates left.
\end{proof}

We will always choose the maximal degree $d_{n}$ of the polynomials
$p_{k,n}$ to be small \wrt\ $c_{1,n}$: for the proof of Theorem \ref{th1}
we will choose simply $d_{n}=n$.

\subsection{Construction of $f_{j}$ for $j$ in a \loi}
The \loi s are the intervals which lie between the working intervals. If
we write such an interval as $[r+1, r+s]$ $f_{j}$ is defined for $j$ in
it
 as $f_{j}=\lambda _{j}e_{j}$, where
$$\lambda _{j}=2^{(\frac {1}{2}s+r+1-j)/\sqrt{s}}.$$
When the length $s$ of such a \loi\ becomes very large, the coefficients
$\lambda _{j}$ behave in the following way: if  $j$ lies in the beginning
of the interval, $\lambda_{j}$ is roughly equal to $2^{\frac
{1}{2}\sqrt{s}}$ (very large), and when $j$ is near the end of the
\loi, $\lambda _{j}$ is roughly $2^{-\frac {1}{2}\sqrt{s}}$ (very
small). This implies in particular that when $j$ is in the beginning of a
\loi, $||e_{j}||$ is very small, approximately less than $2^{-\frac
{1}{2}\sqrt{s}}$.
 Moreover if $s$ is large, the ratio $\lambda _{j}/\lambda _{j+1}$
for $j$ and $j+1$ is the \loi\ becomes very close to $1$. Remark that
this ratio does not depend on $j$.
\par\smallskip
Hence the picture at step $n$ is the following: there is first one very
large \loi, between $\xi _{n}+1$ and $c_{1,n}-1$, then an alternance of
working and \loi s, and at the end a very large \loi\ between $h_{n}
(c_{1,n}+\dots, c_{k_{n},n})+\xi _{n}+1$ and $\xi _{n+1}$.
Then the length of all the \loi s between working
intervals is always comparable to some $c_{k,n}$, the length of the first
\loi\ $[\xi _{n}+1, c_{1,n}-1]$ is comparable to $c_{1,n}$, and the
length of the last one is comparable to $\xi _{n+1}$. Since it would make
the computations too involved if we were to write each time the precise
estimates for $\lambda _{j}$ or $||e_{j}||$, we will often  write only an
approximate  estimate which will give the order of magnitude of the
quantities involved. When doing this, we will use the symbol $\ls$
instead of $\leq$, or $\gs$ instead of $\geq$. For instance for $j$ in the
 beginning of the \loi\
$
[\xi _{n}+1,c_{1,n}-1]$, let us say $j\in [\xi_{n}+1,2\xi _{n}+1]$,
 we will not write
$$||e_{j}||\leq 2^{-\frac {1}{\sqrt{c_{1,n}-\xi _{n}-1}}(\frac {1}{2}
(c_{1,n}-\xi _{n}-1)+\xi _{n}+1-j)}
\leq 2^{-\frac {1}{\sqrt{c_{1,n}-\xi _{n}-1}}(\frac {1}{2}
(c_{1,n}-\xi _{n}-1)-\xi _{n})},$$ but simply
 $||e_{j}||
\ls 2^{-\frac {1}{2}\sqrt{c_{1,n}}}$, and since
$\xi _{n}$ and $d_{n}$ are both small \wrt\ $c_{1,n}$, the estimate
$2^{-\frac {1}{2}\sqrt{c_{1,n}}}$ gives the right order of magnitude for
$||e_{j}||$.

\subsection{Boundedness of the operator $T$}
In order to show that $T$ is bounded on $H$, we need the following
estimates:

\begin{proposition}\label{prop0}\label{prop6}
Let $(\delta _{n})_{n\geq 0}$ be a decreasing sequence of positive
numbers going to zero very fast. The  vectors $f_{j} $ can be constructed
so that for every $n\geq 0$, assertion (1) below holds true:

\begin{enumerate}
\item[(1)] if $x$ is supported in the interval $[\xi _{n}+1, \xi
_{n+1}]$, then

\begin{enumerate}
\item[(1a)] $||\pi_{[\xi _{n}+1, \xi
_{n+1}]}(Tx)||\leq (1+\delta _{n}) ||x||$

\item[(1b)] $||\pi_{[0, \xi
_{n}]}(Tx)||\leq \delta _{n} ||x||$.
\end{enumerate}
\end{enumerate}
\end{proposition}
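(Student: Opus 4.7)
The plan is to proceed by induction on $n$, choosing at step $n$ the parameters $d_n$, $\gamma_n$, $h_n$, the gaps between $c_{1,n}<\dots<c_{k_n,n}$ and the \loi\ lengths (hence $\xi_{n+1}$) in a cascading order of size determined by the data from steps $0$ to $n-1$. The argument reduces to a case-by-case identification of $Tf_j$ in the $(f_k)$-basis for every $j\in[\xi_n+1,\xi_{n+1}]$, according to whether $j$ is interior to or at the right endpoint of a working interval or a \loi.

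The two ``interior'' cases produce a near-isometric shift. If $j$ and $j+1$ belong to the same working interval, the definition of $f_j$ gives $Tf_j=f_{j+1}$ on the nose. If they belong to the same \loi\ of length $s$, then $Tf_j=(\lambda_j/\lambda_{j+1})f_{j+1}=2^{1/\sqrt{s}}f_{j+1}$, a scalar multiple of $f_{j+1}$ with coefficient at most $1+\delta_n/2$ once $s$ is large enough in terms of $\delta_n$. The other two cases are boundary transitions and produce the errors. If $j$ is the right endpoint of a working interval $I_{r_1,\dots,r_t}$, both $e$-vectors appearing in $Tf_j=\gamma_n^{-1}4^{1-|r|}(e_{j+1}-p_{t,n}(T)e_{j+1-c_{t,n}})$ sit at the \emph{start} of \loi s, where $\lambda\gs 2^{\sqrt{s}/2}$, whence $\|Tf_j\|\ls\gamma_n^{-1}4^{|r|}2^{-\sqrt{s}/2}$, arbitrarily small for $s$ large. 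If instead $j$ is the right endpoint of a \loi\ and $j+1$ starts the working interval $I_{r_1,\dots,r_t}$ at $\alpha=0$, then $Tf_j=\lambda_j e_{j+1}$, and expanding $e_{j+1}$ via Lemma \ref{lem7} splits $Tf_j$ into a sum of $f$-terms whose indices all lie in working intervals of step $n$ (hence in $[\xi_n+1,\xi_{n+1}]$), plus a residual $\lambda_j p_{1,n}(T)^{r_1}\dots p_{t,n}(T)^{r_t}e_0$. Using $|p_{l,n}|\leq 2$ this residual has $e$-support in $[0,|r|d_n]$, which I place entirely inside $\textrm{sp}[f_0,\dots,f_{\xi_n}]$ by choosing $d_n$ with $k_nh_nd_n\leq\xi_n$. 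This residual is the \emph{only} mechanism producing a contribution of $Tx$ in $[0,\xi_n]$.

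The estimates follow by straightforward bookkeeping. I write $T|_{\textrm{step }n}=D_n+R_n$, where $D_nf_j=\mu_jf_{j+1}$ is a weighted shift with $|\mu_j|\leq 1+\delta_n/2$ when $j$ is interior and $D_nf_j=0$ otherwise; then $\|D_nx\|\leq(1+\delta_n/2)\|x\|$ directly, since $(f_k)$ is orthonormal. The operator $R_n$ is supported on the at most $2(h_n+1)^{k_n}$ boundary indices, with each $\|R_nf_j\|$ controlled by the two boundary estimates above; Cauchy--Schwarz on the boundary sum gives $\|R_nx\|\leq(\sum_{j\textrm{ boundary}}\|R_nf_j\|^2)^{1/2}\|x\|$, which I force below $(\delta_n/2)\|x\|$ by enlarging the $c_{k,n}$'s. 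This yields $\|\pi_{[\xi_n+1,\xi_{n+1}]}(Tx)\|\leq\|Tx\|\leq(1+\delta_n)\|x\|$, giving (1a). For (1b), $\pi_{[0,\xi_n]}(Tx)=\pi_{[0,\xi_n]}(R_nx)$ (since $D_nx$ is supported in $[\xi_n+2,\xi_{n+1}]$), and the residual estimate $\|\pi_{[0,\xi_n]}(R_nf_j)\|\leq\lambda_jC_{\xi_n}2^{|r|}\ls C_{\xi_n}2^{|r|-\sqrt{s}/2}$---with $C_{\xi_n}$ the equivalence constant between the $H$-norm and the $\ell^2$-norm of $(e_k)$-coordinates on $\textrm{sp}[f_0,\dots,f_{\xi_n}]$ used in the proof of Fact \ref{fact4}---combined with the same Cauchy--Schwarz delivers (1b).

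The main obstacle is arranging the parameter cascade: first $d_n\ll\xi_n/(k_nh_n)$ so that every residual fits inside $[0,\xi_n]$; then $\gamma_n\ll 4^{-k_nh_n}/C_{\xi_n}$ to tame the interior coefficients; and finally the $c_{k,n}$'s and the \loi\ lengths (hence $\xi_{n+1}$) so large that $2^{-\sqrt{s}/2}$ absorbs every exponentially large factor $\gamma_n^{-1}$, $4^{k_nh_n}$, $(h_n+1)^{k_n/2}$ and $C_{\xi_n}$ already fixed. Once this hierarchy is in place, both (1a) and (1b) drop out for any prescribed $\delta_n$.
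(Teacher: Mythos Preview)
Your four-case split and the decomposition $T=D_n+R_n$ match the paper's proof exactly; the argument for the two interior cases and for the working-interval endpoint ($J_3$) is the same. The one substantive divergence is your constraint $k_n h_n d_n\le\xi_n$ in the $J_4$ case, which you use to force (i) every term $p_{l,n}(T)^{s_l}\cdots p_{t,n}(T)^{r_t}f_{\cdots}$ from Lemma~\ref{lem7} to stay inside its working interval, and (ii) the residual $p_{1,n}(T)^{r_1}\cdots p_{t,n}(T)^{r_t}e_0$ to be supported in $[0,\xi_n]$.

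This constraint does simplify the bookkeeping, but it is incompatible with the rest of the construction. In the applications the family $(p_{k,n})$ is a $4^{-\xi_n}$-net of the unit ball of $\K_{d_n}[\zeta]$, so $k_n$ is already exponential in $\xi_n$; and Proposition~\ref{prop10} requires $h_n$ large with respect to $k_n$. Hence $k_n h_n d_n\gg\xi_n$ is forced, and your cascade ``first $d_n\ll\xi_n/(k_n h_n)$'' cannot be arranged (note also that $k_n$ depends on $d_n$, so the ordering is circular). The paper therefore does \emph{not} impose your constraint. Instead, in the $J_4$ analysis it splits the powers $T^u f_{r'}$ into the range $u\le\xi_n$ (where $T^u f_{r'}=f_{r'+u}$ stays in the working interval) and the range $\xi_n+1\le u\le h_n k_n d_n$ (where the resulting $e$-vectors sit at the start of a lay-off interval, hence have norm $\ls 2^{-\frac12\sqrt{c_{1,n}}}$ and are absorbed by choosing $c_{1,n}$ large). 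The residual $p_{1,n}(T)^{r_1}\cdots p_{t,n}(T)^{r_t}e_0$ is handled the same way: its support may overshoot $\xi_n$, but the overshooting part lands in the first lay-off interval and is tiny. If you drop your constraint and insert this overflow estimate, your proof becomes identical to the paper's.
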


Remark that since $[\frac {1}{2}\xi _{n+1},2\xi _{n+1}]$ can be supposed
to be contained in a \loi, it makes sense to write $\pi_{ [\xi _{n}+1,\xi
_{n+1}]}(Tx)$, even when $x$ has a non-zero coordinate on $\xi _{n+1}$.
If $x=f_{\xi _{n+1}} $ for instance, we know, even if $\lambda _{\xi
_{n+1}+1}$ has not been defined yet, that $Tf_{\xi _{n+1}}$ is a multiple
of $f_{\xi _{n+1}+1}$, and thus the projection of $Tx$ on $[\xi
_{n}+1,\xi _{n+1}]$ is zero.

\begin{proof}
Write the vector $x$ as $x=\sum_{j=\xi _{n}+1}^{\xi _{n+1}}x_{j}f_{j}$,
and its image as
 $Tx=
\sum_{j=\xi _{n}+1}^{\xi _{n+1}}x_{j}Tf_{j}$. There are four kind of
indices $j$ in this sum, with a different expression for $Tf_{j}$ each
time.
\par\smallskip
$\bullet$ Let $J_{1}$ be the set of integers $j\in[\xi _{n}+1,\xi
_{n+1}]$ such that $j$ and $j+1$ belong to a \loi: $f_{j}=\lambda
_{j}e_{j}$ and $f_{j+1}=
\lambda _{j+1}e_{j+1}$, so that $Tf_{j}=(\lambda _{j}/\lambda
_{j+1})f_{j+1}$. If the length of the \loi\ is very large, $\lambda _{j}/\lambda
_{j+1}\leq 1+\delta _{n}/2$ for every $j\in J_{1}$, and $Tf_{j}=\mu_{j}f_{j+1}$
with $|\mu _{j}|\leq 1+\delta _{n}/2$.
\par\smallskip
$\bullet$ Let $J_{2}$ be the set of integers $j\in[\xi _{n}+1,\xi
_{n+1}]$ such that $j$ and $j+1$ belong to a working interval: then
simply $Tf_{j}=f_{j+1}$.
\par\smallskip
$\bullet$ Let $J_{3}$ be the set of integers $j\in[\xi _{n}+1,\xi
_{n+1}]$ of the form $j=r_{1}c_{1,n}+\dots+r_{t}c_{t,n}+\xi _{n}$: $j$ is
the endpoint of a working interval and $j+1$ is the first point of the
next \loi. Then $$Tf_{j}=
\gamma _{n}^{-1}4^{1-|r|}\left(
e_{j+1}-p_{t,n}(T)e_{j-c_{t,n}+1}\right).$$ We have $||e_{j+1}||\ls
2^{-\frac {1}{2}\sqrt{c_{1,n}}}$. Moreover if we write the polynomial
$p_{t,n}$ as $p_{t,n}(\zeta )=\sum_{u=0}^{d_{n}}a_{u}\zeta ^{u}$, then $
p_{t,n}(T)e_{j-c_{t,n}+1}=\sum_{u=0}^{d_{n}}a_{u}e_{j-c_{t,n}+1+u}$. Now
since $d_{n}$ is very small \wrt\ each
 $c _{k,n}$, $j-c_{t,n}+1+u$
lies in the beginning of a \loi, and thus $||e_{j-c_{t,n}+1+u}||\ls
2^{-\frac {1}{2}\sqrt{c_{1,n}}}$, so $$||p_{t,n}(T)e_{j-c_{t,n}+1}||\leq
2 \sup_{0\leq u\leq d_{n}} ||e_{j-c_{t,n}+1}||\ls 2^{-\frac
{1}{2}\sqrt{c_{1,n}}}.$$
 Hence $||Tf_{j}||\ls \gamma _{n}^{-1}
2^{-\frac {1}{2}\sqrt{c_{1,n}}}$ and since $\gamma _{n}$ depends only on
$\xi
_{n}$, $||Tf_{j}||$ can be made arbitrarily small for an appropriate
choice of $c_{1,n}$.
\par\smallskip
$\bullet$ Let $J_{4}$ be the set of integers $j\in[\xi _{n}+1,\xi
_{n+1}]$ of the form $j=r_{1}c_{1,n}+\dots+r_{t}c_{t,n}-1$: $j$ is the
endpoint of a \loi, and $j+1$ is the first endpoint of the next working
interval. Then $Tf_{j}=\lambda _{j}e_{j+1}$. Using Lemma \ref{lem7}, we
get that
\begin{eqnarray*}
e_{r_{1}c_{1,n}+\dots+r_{t}c_{t,n}}&=&\Bigl(\sum_{l=1}^{t}
\sum_{s_{l}=0}^{r_{l}}
\gamma _{n} 4^{r_{1}+\dots r_{l-1}+(r_{l}-s_{l})-1}
p_{l,n}(T)^{s_{l}}p_{l+1,n}(T)^{r_{l+1}}\dots p_{t,n}(T)^{r_{t}} \\ &&
f_{r_{1}c_{1,n}+\dots+ r_{l-1}c_{l-1,n}+(r_{l}-s_{l})c_{l,n}}\Bigr) +
p_{1,n}(T)^{r_{1}}\dots p_{t,n}(T)^{r_{t}}e_{0 }.
\end{eqnarray*}
The polynomial $p_{s_{1},\dots, s_{t}}=p_{1,n}^{s_{1}}\dots
p_{t,n}^{s_{t}}$ has degree at most $h_{n}k_{n}d_{n} $, and $|p|\leq
2^{s_{1}+\dots+s_{t}}$. Write $p_{s_{1},\dots, s_{t}}(\zeta
)=\sum_{u=0}^{h_{n}k_{n}d_{n}} a_{u}^{(s_{1},\dots,s_{t})}\zeta ^{u}$.
Then

\begin{eqnarray*}
&&p_{0,\dots,0, s_{l}, r_{l+1},\dots, r_{t}}(T) f_{r_{1}c_{1,n}+\dots+
r_{l-1}c_{l-1,n}+(r_{l}-s_{l})c_{l,n}}=\qquad\\
&&\qquad\qquad\qquad\sum_{u=0}^{h_{n}k_{n}d_{n}} a_{u}^{(0,\dots,0,
s_{l}, r_{l+1}, \dots, r_{t})}T^{u}f_{r_{1}c_{1,n}+\dots+
r_{l-1}c_{l-1,n}+(r_{l}-s_{l})c_{l,n}}.
\end{eqnarray*}

If $u\leq \xi _{n}$, $$T^{u}f_{r_{1}c_{1,n}+\dots+
r_{l-1}c_{l-1,n}+(r_{l}-s_{l})c_{l,n}}=f_{r_{1}c_{1,n}+\dots+
r_{l-1}c_{l-1,n}+(r_{l}-s_{l})c_{l,n}+u}$$ and if $\xi _{n}+1\leq
 u\leq h_{n}k_{n}d_{n}$, then
 $$T^{u}f_{r_{1}c_{1,n}+\dots+
r_{l-1}c_{l-1,n}+(r_{l}-s_{l})c_{l,n}}=T^{\alpha } f_{r_{1}c_{1,n}+\dots+
r_{l-1}c_{l-1,n}+(r_{l}-s_{l})c_{l,n}+\xi _{n}}$$ where $1\leq \alpha
\leq h_{n}k_{n} d_{n}$. So if $r'=r_{1}c_{1,n}+\dots+
r_{l-1}c_{l-1,n}+(r_{l}-s_{l})c_{l,n}$, then in the case where $r'\not
=0$,
\begin{eqnarray*}
 T^{u}f_{r_{1}c_{1,n}+\dots+
r_{l-1}c_{l-1,n}+(r_{l}-s_{l})c_{l,n}}&=&\gamma _{n}^{-1}
4^{1-|r'|}e_{r_{1}c_{1,n}+\dots+
r_{l-1}c_{l-1,n}+(r_{l}-s_{l})c_{l,n}+\alpha }\\ &-&
p_{v,n}(T)e_{r_{1}c_{1,n}+\dots+
r_{l-1}c_{l-1,n}+(r_{l}-s_{l})c_{l,n}-c_{v,n}+\alpha }
\end{eqnarray*}
where $v$ is the largest non-zero coordinate in $r'$. Using exactly the
same argument as in the case $j\in J_{3}$ above, we see that in the case
where $\xi _{n}+1\leq
 u\leq h_{n}k_{n}d_{n}$, then $||T^{u}f_{r_{1}c_{1,n}+\dots+
r_{l-1}c_{l-1,n}+(r_{l}-s_{l})c_{l,n}}||$ can be made arbitrarily small.
When $r'=0$,
 $||T^{u}e_{0}||=||e_{u}||$, and with $\xi _{n}+1
\leq u \leq h_{n}k_{n}d_{n}$, $||e_{u}||$ can be made
arbitrarily small again. Hence
\begin{eqnarray*}
&&\left|\left|\sum_{l=1}^{t}
\sum_{s_{l}=0}^{r_{l}}
\gamma _{n} 4^{r_{1}+\dots r_{l-1}+(r_{l}-s_{l})-1}
p_{0,\dots,0, s_{l}, r_{l+1},\dots, r_{t}}(T) f_{r_{1}c_{1,n}+\dots+
r_{l-1}c_{l-1,n}+(r_{l}-s_{l})c_{l,n}}\right|\right|\\ &&\qquad\qquad\ls
\sum_{l=1}^{t}
\sum_{s_{l}=0}^{r_{l}}
\gamma _{n} 4^{r_{1}+\dots r_{l-1}+(r_{l}-s_{l})-1}2^{s_{l}+r_{l+1}
+\ldots r_{t}}\leq \gamma _{n}4^{|r|-1}2k_{n}.
\end{eqnarray*}
For the remaining term $||p_{r_{1},\dots, r_{t}}(T)e_{0}|| $ we
proceed as above:
\begin{eqnarray*}
||p_{r_{1},\dots, r_{t}}(T)e_{0}||&\leq&
||\sum_{u=0}^{h_{n}k_{n}d_{n}} a_{u}^{(r_{1},\dots,r_{t})}e_{u}||\\
&\leq& \left(\sum_{u=0}^{h_{n}k_{n}d_{n}} |a_{u}^{(r_{1},\dots,r_{t})}|
\right)\; \sup_{
u\leq h_{n}k_{n}d_{n}}||e_{u}||\leq 2^{h_{n}k_{n}} \sup_{
u\leq h_{n}k_{n}d_{n}}||e_{u}||.
\end{eqnarray*}
Since $\lambda _{j}
\ls 2^{-\frac {1}{2}\sqrt{c_{1,n}}}$
and neither $h_{n}$ nor $k_{n}$ nor $d_{n}$ depend on $c_{1,n}$,
 we obtain that $||p_{r_{1},\dots, r_{t}}(T)e_{0}||$
can be made arbitrarily small, and hence the same is true for
$||Tf_{j}||$.
\par\smallskip
$\bullet$ Putting the previous estimates together, we obtain that
$$||   T(
\sum_{j\in J_{1}\cup J_{2}}x_{j}f_{j}
) ||^{2}\leq (1+\frac {\delta _{n}}{2})^{2}
\sum_{j\in J_{1}}|x_{j}|^{2}+ \sum_{j\in J_{2}}|x_{j}|^{2}
\leq (1+\frac {\delta _{n}}{2})^{2} ||x||^{2}
$$
and
$$||T(\sum_{j\in J_{3}\cup J_{4}}x_{j}f_{j}
)||\leq
\left(\sum_{j\in J_{3}\cup J_{4}}||Tf_{j}||^{2}\right)
^{\frac {1}{2}}
\left(\sum_{j\in J_{3}\cup J_{4}}|x_{j}|^{2}\right)^{\frac {1}{2}}
\leq \frac {\delta _{n}}{2}||x||,
$$
so that $||Tx||\leq (1+\delta _{n})||x||$, and this proves that
$||\pi_{[\xi _{n}+1,
\xi_{n+1} ]}(Tx)||\leq (1+\delta _{n})||x||$ which proves (1a). Since $
\pi_{[0,
\xi_{n} ]}\left(\sum_{j\in J_{1}\cup J_{2}}x_{j}f_{j}\right)=0$, this
proves (1b) too.
\end{proof}

The boundedness of $T$ follows now easily from Proposition \ref{prop6}:

\begin{proposition}\label{prop1}\label{prop8}
Let $\varepsilon $ be any positive number. If the sequence $(\delta
_{n})$ corresponding to the construction of Proposition
\ref{prop6} goes fast enough to zero, $T$ extends to a bounded \op\ on $H$
satisfying $||T||\leq 1+\varepsilon $.
\end{proposition}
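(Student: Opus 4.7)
My plan is to exploit the orthogonal block decomposition $H = \bigoplus_{m \geq 0} H_m$, where $H_m := \mathrm{span}[f_j : j \in E_m]$ with $E_0 := [0, \xi_1]$ and $E_m := [\xi_m+1, \xi_{m+1}]$ for $m \geq 1$, and to reduce the global bound to the block-by-block estimates already provided by Proposition \ref{prop6}. Since $T$ is a priori defined on $c_{00}$, which is dense in $H$, it suffices to establish $\|Tx\| \leq (1+\varepsilon)\|x\|$ for $x \in c_{00}$. Writing $x = \sum_m x_m$ with $x_m := \pi_{E_m}(x)$, this is a finite sum and by orthogonality $\|x\|^2 = \sum_m \|x_m\|^2$.

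For each $m$ I would decompose $Tx_m$ orthogonally as $T_{\mathrm{in}} x_m + T_{\mathrm{fwd}} x_m + T_{\mathrm{back}} x_m$, with the three pieces lying respectively in $H_m$, in $H_{m+1}$, and in $\bigoplus_{k < m} H_k$. The in-block piece and the backward piece are controlled directly by Proposition \ref{prop6}: $\|T_{\mathrm{in}} x_m\| \leq (1+\delta_m)\|x_m\|$ by (1a) and $\|T_{\mathrm{back}} x_m\| \leq \delta_m \|x_m\|$ by (1b). Summing over $m$, orthogonality of the $H_m$ yields $\|T_{\mathrm{in}} x\|^2 \leq (1+\delta_0)^2 \|x\|^2$, while the triangle inequality followed by Cauchy--Schwarz yields $\|T_{\mathrm{back}} x\| \leq \sum_m \delta_m \|x_m\| \leq \bigl(\sum_m \delta_m^2\bigr)^{1/2} \|x\|$.

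The main technical obstacle is that the forward spillover $T_{\mathrm{fwd}} x_m$ is \emph{not} directly bounded by Proposition \ref{prop6}, which controls only the projection onto $E_m$ and not onto $E_{m+1}$. Inspection of the four cases $J_1, J_2, J_3, J_4$ in the proof of Proposition \ref{prop6} shows that $T_{\mathrm{fwd}} x_m$ is supported on the single vector $f_{\xi_{m+1}+1}$ and comes from $j = \xi_{m+1} \in J_1$: explicitly, $T_{\mathrm{fwd}} x_m = x_m(\xi_{m+1}) \, (\lambda_{\xi_{m+1}}/\lambda_{\xi_{m+1}+1}) \, f_{\xi_{m+1}+1}$. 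Since $\xi_{m+1}$ sits at the end of a long \loi\ (where $\lambda_{\xi_{m+1}}$ is of order $2^{-\frac{1}{2}\sqrt{s_m}}$) and $\xi_{m+1}+1$ at the start of the next long \loi\ (where $\lambda_{\xi_{m+1}+1}$ is of order $2^{\frac{1}{2}\sqrt{s_{m+1}}}$), the ratio $|\lambda_{\xi_{m+1}}/\lambda_{\xi_{m+1}+1}|$ can be made arbitrarily small. I would bake into the inductive choice of $c_{1,m+1}$ the requirement that this ratio be $\leq \delta_m$, so that $\|T_{\mathrm{fwd}} x_m\| \leq \delta_m \|x_m\|$, and then use the fact that forward pieces from different blocks lie in orthogonal subspaces to obtain $\|T_{\mathrm{fwd}} x\|^2 = \sum_m \|T_{\mathrm{fwd}} x_m\|^2 \leq \delta_0^2 \|x\|^2$. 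Combining the three estimates via the triangle inequality gives $\|Tx\| \leq \bigl[1 + 2\delta_0 + \bigl(\sum_m \delta_m^2\bigr)^{1/2}\bigr]\|x\|$, which is $\leq (1+\varepsilon)\|x\|$ provided $(\delta_n)$ tends to zero fast enough; boundedness on $c_{00}$ then extends by density to all of $H$ with the same norm.
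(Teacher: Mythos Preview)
Your argument is correct and is in fact a bit cleaner than the paper's own proof. The paper proceeds by induction on $n$: assuming $\|Tx\|\leq C_n\|x\|$ for all $x$ supported in $[0,\xi_n]$, it expands $Tx$ for $x$ supported in $[0,\xi_{n+1}]$ into a sum of nine cross--projections, bounds each using (1a), (1b), the inductive hypothesis, and the same control of the ratio $\lambda_{\xi_{n+1}}/\lambda_{\xi_{n+1}+1}$ that you invoke, and extracts a recursion $C_{n+1}^2\le \max(C_n^2+\delta_{n-1}^2,\,1+(1+\delta_n)^2+\delta_n^2)+O(\delta_n)$ which stays below $(1+\varepsilon)^2$ if the $\delta_n$ decay fast enough. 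You bypass the induction entirely: you treat all blocks at once, group $Tx_m$ into an in--block, a forward, and a backward piece, and exploit the orthogonality of the $H_m$ to sum the in--block and forward contributions in $\ell^2$, falling back on Cauchy--Schwarz only for the backward pieces (which genuinely overlap). This gives the explicit global bound $1+2\delta_0+(\sum_m\delta_m^2)^{1/2}$ directly. The key extra observation you need---that the forward spillover is carried by the single index $j=\xi_{m+1}$ and that the corresponding ratio can be forced below $\delta_m$ by the choice of $c_{1,m+1}$---is exactly the same one the paper makes inside its induction step.

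One small wrinkle: your block $E_0=[0,\xi_1]$ includes the index $0$, whereas Proposition~\ref{prop6} at $n=0$ is stated for $x$ supported in $[\xi_0+1,\xi_1]=[1,\xi_1]$, so (1a)--(1b) do not literally cover the $f_0$--component. This is harmless, since $Tf_0=e_1$ has norm $\lambda_1^{-1}\lesssim 2^{-\frac12\sqrt{c_{1,0}}}$, which is absorbed into the $\delta_0$--term; but you should say so.
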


\begin{proof}
The proof is by induction on $n$, supposing that $||Tx||\leq C_{n}||x||$
for every $x$ supported in $[0,\xi _{n}]$. Suppose that $x$ is supported
in $[0,\xi _{n+1}]$, and write $Tx$ (which is supported in $[0,\xi
_{n+2}]$) as
\begin{eqnarray*}
Tx&=&T(\pi_{[0,\xi _{n}]}(x))+T(\pi_{[\xi _{n}+1,\xi _{n+1}]}(x))
=
\pi_{[0,\xi _{n}]} (T(\pi_{[0,\xi _{n}]}(x)))\\
&+&\pi_{[\xi _{n}+1,\xi _{n+1}]} (T(\pi_{[0,\xi _{n}]}(x)))+\pi_{[0,\xi
_{n}]}(T(
\pi_{[\xi _{n}+1,\xi _{n+1}]}(x)))\\
&+&\pi_{[\xi _{n}+1,\xi _{n+1}]} (T(\pi_{[\xi _{n}+1,\xi _{n+1}]}(x))) +
\pi_{[\xi _{n+1}+1,\xi _{n+2}]} (T(\pi_{[\xi _{n}+1,\xi _{n+1}]}(x))).
\end{eqnarray*}
Hence
\begin{eqnarray*}
||Tx||^{2}&=& ||\pi_{[0,\xi _{n}]} (T(\pi_{[0,\xi _{n}]}(x)))+\pi_{[0,\xi
_{n}]}(T(
\pi_{[\xi _{n}+1,\xi _{n+1}]}(x)))||^{2}\\
&+&||\pi_{[\xi _{n}+1,\xi _{n+1}]} (T(\pi_{[0,\xi _{n}]}(x))) +\pi_{[\xi
_{n}+1,\xi _{n+1}]} (T(\pi_{[\xi _{n}+1,\xi _{n+1}]}(x)))||^{2}\\ &+&
||\pi_{[\xi _{n+1}+1,\xi _{n+2}]} (T(\pi_{[\xi _{n}+1,\xi
_{n+1}]}(x)))||^{2}.
\end{eqnarray*}
The terms in this expression which remain to be estimated
are $||\pi_{[\xi _{n}+1,\xi _{n+1}]} (T(\pi_{[0,\xi _{n}]}(x)))||$
and $||\pi_{[\xi _{n+1}+1,\xi _{n+2}]} (T(\pi_{[\xi _{n}+1,\xi
_{n+1}]}(x)))||$. The first one is equal to
$|x_{\xi _{n}+1}|^{2}\,||Tf_{\xi _{n+1}}||^{2}=|x_{\xi _{n}+1}|^{2}
(\lambda _{\xi _{n+1}}/\lambda _{\xi _{n+1}+1})^{2}$, and we can
choose $\lambda _{\xi _{n+1}+1}$ so large that
$\lambda _{\xi _{n+1}}/\lambda _{\xi _{n+1}+1}\leq \delta _{n-1}$ for
instance. We do the same for the last term, and then
\begin{eqnarray*}
||Tx||^{2}&\leq& (C_{n}||\pi_{[0,\xi _{n}]}(x)||+
\delta _{n}||\pi_{[\xi_{n}+1,\xi _{n+1}]}(x)||)^{2}\\
&+& (\delta _{n-1}||\pi_{[0,\xi _{n}]}(x)||+(1+\delta _{n})||
\pi_{[\xi_{n}+1,\xi _{n+1}]}(x)||)^{2}+\delta _{n}^{2}||
\pi_{[\xi_{n}+1,\xi _{n+1}]}(x)||^{2}\\
&=&(C_{n}^{2}+\delta _{n-1}^{2}) ||\pi_{[0,\xi _{n}]}(x)||^{2}+ (1+(1+\delta _{n})^{2}
+\delta _{n}^{2}) ||\pi_{[\xi_{n}+1,\xi
_{n+1}]}(x)||^{2}\\
&+& (2C_{n}\delta _{n}+2\delta _{n-1}(1+\delta _{n})) ||\pi_{[0,\xi _{n}]}(x)||\,
||\pi_{[\xi_{n}+1,\xi _{n+1}]}(x)||
\end{eqnarray*}
which  yields that
$$||Tx||^{2}
\leq \left(
(\max (C_{n}^{2}+\delta _{n-1}^{2}, 1+(1+\delta _{n})^{2}+\delta _{n}^{2}) +
(2C_{n}\delta _{n}+2\delta _{n-1}(1+\delta _{n}))\delta _{n}\right) ||x||^{2},$$ and the proof of
Proposition \ref{prop1} follows by induction.
\end{proof}

We finish this section with the following stronger form of
Proposition \ref{prop6}:

\begin{proposition}\label{prop00}\label{prop9}
 Given a sequence of positive numbers $(\varepsilon_n
)_{n\geq 1}$ which decreases very quickly to zero, the construction of
the fans at each step can be conducted in such a way that

\begin{enumerate}
\item[(1')] if $x$ is supported in the interval $[\xi _{n}+1, \xi
_{n+1}]$, then for every $m<\xi _{n}/2$

\begin{enumerate}
\item[(1a')] $||\pi_{[\xi _{n}+1, \xi
_{n+1}]}(T^{m}x)||\leq (1+\varepsilon  _{n}) ||x||$

\item[(1b')] $||\pi_{[0, \xi
_{n}]}(T^{m}x)||\leq \varepsilon  _{n} ||x||$

\item[(1c')] $\|\pi_{[\xi_{n+1}+1,\xi_{n+2}]}(T^mx)\|\leq (1+\varepsilon_n)
\|x \|$.
\end{enumerate}
\end{enumerate}
\end{proposition}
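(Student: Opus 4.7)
The plan is to argue by induction on $m$, decomposing $T^{m}x$ according to the block structure. For $x$ supported in $[\xi_{n}+1,\xi_{n+1}]$ and each $m<\xi_{n}/2$, I write $T^{m}x = u_{m}+v_{m}+w_{m}$ with $u_{m}=\pi_{[0,\xi_{n}]}(T^{m}x)$, $v_{m}=\pi_{[\xi_{n}+1,\xi_{n+1}]}(T^{m}x)$ and $w_{m}=\pi_{[\xi_{n+1}+1,\xi_{n+2}]}(T^{m}x)$. A first preliminary observation is that for $m<\xi_{n}/2$, the support of $T^{m}x$ stays inside $[0,\xi_{n+2}]$: any mass that leaks from the working block $[\xi_{n}+1,\xi_{n+1}]$ into $[\xi_{n+1}+1,\xi_{n+2}]$ enters at position $\xi_{n+1}+1$, lies in the initial lay-off interval of step $n+1$, and advances by only one position per application of $T$. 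Since $c_{1,n+1}$ is chosen much larger than $\xi_{n}$, this mass never reaches the working part of the next block during the first $\xi_{n}/2$ iterations.

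The inductive step consists in applying $T$ to each of $u_{m}, v_{m}, w_{m}$ and tracking the three projections of $Tu_{m}+Tv_{m}+Tw_{m}$. For $Tv_{m}$, Proposition~\ref{prop6} applied to the vector $v_{m}$ (which is supported in $[\xi_{n}+1,\xi_{n+1}]$) directly gives $||\pi_{[\xi_{n}+1,\xi_{n+1}]}(Tv_{m})||\leq(1+\delta_{n})||v_{m}||$ and $||\pi_{[0,\xi_{n}]}(Tv_{m})||\leq\delta_{n}||v_{m}||$, while the $J_{1}$-boundary computation from the proof of Proposition~\ref{prop6} applied to $j=\xi_{n+1}$ yields $||\pi_{[\xi_{n+1}+1,\xi_{n+2}]}(Tv_{m})||\leq(\lambda_{\xi_{n+1}}/\lambda_{\xi_{n+1}+1})||v_{m}||\leq\delta_{n}||v_{m}||$ provided $\lambda_{\xi_{n+1}+1}$ is chosen large enough. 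By the same argument applied to the previous block, $Tu_{m}$ satisfies $||\pi_{[0,\xi_{n}]}(Tu_{m})||\leq(1+\delta_{n-1})||u_{m}||$ and leaks into $[\xi_{n}+1,\xi_{n+1}]$ only from the coefficient at position $\xi_{n}$, with factor $\leq(\lambda_{\xi_{n}}/\lambda_{\xi_{n}+1})\leq\delta_{n-1}$. Finally, since $w_{m}$ is supported inside the initial lay-off interval of step $n+1$, $T$ acts on it as a weighted shift with ratios $\lambda_{j}/\lambda_{j+1}$ arbitrarily close to $1$, so $||\pi_{[\xi_{n+1}+1,\xi_{n+2}]}(Tw_{m})||\leq(1+\delta_{n}')||w_{m}||$ with no contribution to the other two pieces.

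Combining these bounds yields the linear recursion
\begin{eqnarray*}
A_{m+1}&\leq&(1+\delta_{n})A_{m}+\delta_{n-1}B_{m},\\
B_{m+1}&\leq&\delta_{n}A_{m}+(1+\delta_{n-1})B_{m},\\
C_{m+1}&\leq&\delta_{n}A_{m}+(1+\delta_{n}')C_{m},
\end{eqnarray*}
for $A_{m}=||v_{m}||$, $B_{m}=||u_{m}||$, $C_{m}=||w_{m}||$, with $A_{0}=||x||$ and $B_{0}=C_{0}=0$. Iterating at most $\xi_{n}/2$ steps, a direct induction shows that if the parameters $\delta_{n}, \delta_{n-1}, \delta_{n}'$ and the ratio $\lambda_{\xi_{n+1}}/\lambda_{\xi_{n+1}+1}$ have all been chosen much smaller than $\varepsilon_{n}/\xi_{n}$ at construction time, then $A_{m}\leq(1+\varepsilon_{n})||x||$, $B_{m}\leq\varepsilon_{n}||x||$ and $C_{m}\leq(1+\varepsilon_{n})||x||$ for every $m<\xi_{n}/2$, which is precisely (1a'), (1b') and (1c').

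The main obstacle will be the bookkeeping: one must verify that the successive leakages between the three blocks do not compound in a way that defeats the gain from having small $\delta_{n}$. This is handled by exploiting the freedom in the construction at step $n$ to fix $\delta_{n}$, and, via the large parameter $c_{1,n+1}$ and the choice of $\lambda_{j}$ at the endpoints, the boundary ratios, after $\xi_{n}$ and $\varepsilon_{n}$ are known, so that all accumulated error terms can be absorbed into $\varepsilon_{n}$ throughout the range $m<\xi_{n}/2$.
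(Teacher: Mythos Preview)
Your approach is essentially the paper's: both proceed by induction on $m$, split $T^{m}x$ into its projections on $[0,\xi_{n}]$, $[\xi_{n}+1,\xi_{n+1}]$, and the overflow into $[\xi_{n+1}+1,\xi_{n+2}]$, observe that the overflow piece sits entirely in the first lay-off interval of step $n+1$ (so $T$ acts on it as a weighted shift and it never feeds back into the first two pieces), and then run a linear recursion whose constants are absorbed by choosing $\delta_{n}$ small enough after $\xi_{n}$ is known. The paper packages this as the inductive statement $(P_{m})$ with constants $C_{m,n}$, but the content is the same.

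One point needs correction. The bound $\|\pi_{[0,\xi_{n}]}(Tu_{m})\|\leq(1+\delta_{n-1})\|u_{m}\|$ does not follow from ``the same argument applied to the previous block'': the vector $u_{m}$ is supported on all of $[0,\xi_{n}]$, not on a single block $[\xi_{n-1}+1,\xi_{n}]$, so Proposition~\ref{prop6} at step $n-1$ does not apply. Moreover $\delta_{n-1}$ is fixed before $\xi_{n}$ is determined, so you cannot make it small relative to $\varepsilon_{n}/\xi_{n}$. The paper instead uses the crude global bound $\|Tu_{m}\|\leq 2\|u_{m}\|$ from Proposition~\ref{prop8}. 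With this, the recursion for $B_{m}$ reads $B_{m+1}\leq\delta_{n}A_{m}+2B_{m}$, and the combined constants grow like $3^{m}$ rather than linearly; consequently $\delta_{n}$ must be chosen of order $\varepsilon_{n}/3^{\xi_{n}/2}$, not merely $\varepsilon_{n}/\xi_{n}$. Since $\delta_{n}$ is selected at step $n$ after $\xi_{n}$ is known, this is harmless, and with this adjustment your argument goes through.
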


\begin{proof}
As in the proof of  Proposition
\ref{prop6}, we are going to show that
if the construction has been carried out until step $n-1$, the
$c_{j,n}$'s at step $n$ can be chosen so large
 that (1a') and (1b') hold true at step $n$, as well as
(1c') at step $n-1$.
We denote again $F_{\xi_n}=\textrm{sp}[e_0,\dots,e_{\xi_n}]$.
As soon as $c_{1,n}$ is much larger than $\xi
_{n}$, the projection on $[\xi _{n}+1,\xi_{n+1}]$ of $T^{m}(F_{\xi
_{n}})$, $m<\xi _{n}/2$, consists of vectors supported in the beginning
of the \loi\ $[\xi _{n}+1,c_{1,n}-1]$. This implies that
$\|\pi_{[\xi_{n+1}+1,\xi_{n+2}]}(T^mx)\|
\leq C_{\xi _{n}}2^{-\frac {1}{2}\sqrt{c_{1,n}}}\|x\|$
where $C_{\xi_{n}}$ depends only on the steps $0$ to $n-1$
while $c_{1,n}$ is very large \wrt\ $C_{\xi _{n}}$:
this shows that condition (1c') at step $n-1$ is satisfied.
\par\smallskip
Denote by $T_{\xi _{n}}$ the truncated shift on $F_{\xi _{n}}$ \wrt\ the
vectors $e_{j}$: $T_{\xi _{n}}e_{j}=e_{j+1}$ for $j<\xi _{n}$ and
$T_{\xi _{n}}e_{\xi _{n}}=0$. The
proof of Proposition 2.5 shows that one can ensure
 that $\|T_{\xi _{n}}\|\leq
2-\frac1{n}$ for instance. The fact that
conditions (1a') and (1b') can be fulfilled follows from the
statement $(P_m)$ below which we prove by induction:
\par\smallskip\par\smallskip
$(P_m)$: there  exists a constant $C_{m,n}$ depending only on
the construction until step $n-1$ such that if
properties (1a) and (1b) of Proposition \ref{prop6} at step $n$
are satisfied for for some $\delta_n>0$, then for every $x$ supported in
$[\xi _{n}+1,\xi _{n+1}]$,
$\|\pi_{[\xi_{n}+1,\xi_{n+1}]}(T^mx)\|\leq (1+C_{n,m}\delta_n) \|x
\|$ and $\|\pi_{[0,\xi_{n}]}(T^mx)\|\leq C_{m,n}\delta_n\|x \|$.
\par\smallskip\par\smallskip
Once $(P_m)$ is proven, it suffices to choose $\delta_n=\varepsilon_{n}/
\max_{m<\xi _{n}/2}(C_{n,m})$.
The base of the inductive proof of $(P_n)$ is Proposition \ref{prop6}
itself. Assume now that $(P_{m-1})$ holds true. Write $\pi_{[\xi
_{n}+1, \xi _{n+1}]}(T^{m}x)=\pi_{[\xi _{n}+1, \xi
_{n+1}]}(T(T^{m-1}x)).$ If $y=T^{m-1}x$, $y$ is supported in $[0,\xi
_{n+1} -m+1]$ and we have
\begin{eqnarray*}
\pi_{[\xi _{n}+1, \xi _{n+1}]}(Ty)&=&\pi_{[\xi _{n}+1, \xi
_{n+1}]}(T(\pi_{[0,\xi _{n}]}(y)))+\pi_{[\xi _{n}+1, \xi
_{n+1}]}(T(\pi_{[\xi_{n}+1,\xi
_{n+1}]}(y)))\\
&+&\pi_{[\xi _{n}+1, \xi _{n+1}]}(T(\pi_{[\xi _{n+1}+1,\xi
_{n+1}+m-1]}(y)))
\end{eqnarray*}
and
\begin{eqnarray*}
\pi_{[0, \xi _{n}]}(Ty)&=&\pi_{[0, \xi _{n}]}(T(\pi_{[0,\xi
_{n}]}(y)))+\pi_{[0, \xi
_{n}]}(T(\pi_{[\xi_{n}+1,\xi _{n+1}]}(y)))\\
&+&\pi_{[0, \xi _{n}]}(T(\pi_{[\xi _{n+1}+1,\xi _{n+1}+m-1]}(y))).
\end{eqnarray*}
Since the vector $\pi_{[\xi _{n+1}+1,\xi _{n+1}+m-1]}(y)$ is supported on the
first lay-off interval of $[\xi _{n+1}+1,\xi _{n+2}]$, the operator
$T$ acts on it as a weighted shift operator and the projection
$$\pi_{[0, \xi _{n+1}]}(T(\pi_{[\xi _{n+1}+1,\xi _{n+1}+m-1]}(y))),$$
as well as the last term in each one of the two displays above is zero.
For the other two terms we have (assuming that $\delta_n<1$) $$\|\pi_{[\xi
_{n}+1, \xi _{n+1}]}(Ty)\|\leq \|T(\pi_{[0,\xi
_{n}]}(y))\|+(1+\delta_n)\|\pi_{[\xi_{n}+1,\xi _{n+1}]}(y)\|\leq $$
$$ 2C_{m-1,n}\delta_n\|x\|+ (1+\delta_n) \|\pi_{[\xi_{n}+1,\xi
_{n+1}]}(y)\|\leq (1+(3C_{m-1,n}+2)\delta_n)\|x\|$$ and
$$\|\pi_{[0, \xi _{n}]}(Ty) \|\leq \|T(Y)\|+
\delta_n \|\pi_{[\xi_{n}+1,\xi _{n+1}]}(y)\|\leq $$
$$2\|\pi_{[0,\xi _{n}]}(y)\|+\delta_n(1+C_{m-1,n}\delta_n)\|x\|\leq
(3C_{m-1,n}+1)\delta_n\|x\|,$$ which completes the induction and
thus the proof of Proposition \ref{prop9}.
\end{proof}

\section{Estimating $T^{c_{k,n}}$: proof of Theorem \ref{th1}}
As was already mentioned before, the crucial step for the proof of
Theorem \ref{th1} is Proposition \ref{prop5}. The
estimates needed for this are given in Proposition \ref{prop10}:

\begin{proposition}\label{prop10}
Let $(\delta _{n})_{n\geq 0}$ be a decreasing sequence of positive
numbers going to zero very fast. The  vectors $f_{j} $ can be constructed
so that for every $n\geq 0$, assertion (2) below holds true:

\begin{enumerate}
\item[(2)] for any vector $x$  supported in the interval $[\xi _{n}+1, \xi
_{n+1}]$ and for any $1\leq k \leq k_{n}$,
\begin{enumerate}
\item[(2a)] $||\pi_{[\xi _{n}+1, \xi
_{n+1}]}(T^{c_{k,n}}x)||\leq 4 ||x||$

\item[(2b)] $||\pi_{[0, \xi
_{n}]}(T^{c_{k,n}}x)||\leq \delta _{n} ||x||$
\end{enumerate}

\item[(3)] for any $x$ supported in the interval $[0, \xi
_{n}]$ and any $m<\xi _{n}/2$,
\begin{enumerate}
\item[$\;$]$||\pi_{[\xi _{n}+1, \xi
_{n+1}]}(T^{m}x)||\leq \delta _{n} ||x||$.
\end{enumerate}
\end{enumerate}
\end{proposition}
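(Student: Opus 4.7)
The strategy is to expand $x=\sum_j x_j f_j$ and to compute $T^{c_{k,n}} f_j$ (respectively $T^m f_j$ for (3)) case by case according to whether $j$ lies in a working interval or a lay-off interval, exploiting the cancellations built into the definition of the $f_j$'s.

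For (3), any $x$ supported in $[0,\xi_n]$ is a finite combination $\sum \alpha_j e_j$, so the projection onto $[\xi_n+1,\xi_{n+1}]$ of $T^m x$ is supported in $[\xi_n+1,\xi_n+m]\subseteq [\xi_n+1,\frac{3}{2}\xi_n]$, which lies at the beginning of the long lay-off interval $[\xi_n+1, c_{1,n}-1]$. For such indices $\|e_j\|\ls 2^{-\frac 12\sqrt{c_{1,n}}}$, and combined with the norm-equivalence constant $C_{\xi_n}$ on $F_{\xi_n}$ (which depends only on the construction through step $n-1$) this yields the bound $\delta_n\|x\|$ as soon as $c_{1,n}$ is chosen large enough.

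The heart of the argument is (2a) and (2b). The main computation I plan to carry out is the following algebraic identity: if $j\in I_{r_1,\ldots, r_t}$ lies in a working interval with largest non-zero coordinate $t$ and $r_k+1\leq h_n$, then
$$T^{c_{k,n}} f_j = \left\{
\begin{array}{ll}
4\, f_{j+c_{k,n}} & \mbox{if } k\leq t,\\[1mm]
4\, f_{j+c_{k,n}} + p_{k,n}(T) f_j - p_{t,n}(T) f_{j+c_{k,n}-c_{t,n}} & \mbox{if } k > t.
\end{array}
\right.$$
This identity is obtained by substituting $f_j=\gamma_n^{-1}4^{1-|r|}(e_j-p_{t,n}(T)e_{j-c_{t,n}})$ into $T^{c_{k,n}}f_j$ and re-expressing $e_{j+c_{k,n}}$ (and, when $k>t$, also $e_{j+c_{k,n}-c_{t,n}}$) via the corresponding defining equations for $f_{j+c_{k,n}}$ and $f_{j+c_{k,n}-c_{t,n}}$ in the fan: the extra $e$-terms then cancel telescopically. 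Each correction term has $H$-norm at most $2$, since $T^u f_i=f_{i+u}$ for $u\leq d_n$ whenever $i$ and $i+u$ lie in the same working interval, and $|p_{k,n}|,|p_{t,n}|\leq 2$. As the shift $j\mapsto j+c_{k,n}$ is injective, the leading terms $4 f_{j+c_{k,n}}$ combine orthogonally to contribute $4\|x\|$, while the correction terms, whose supports have length $\leq d_n\ll c_{1,n}$, add a further $O(\|x\|)$ via a convolution-type $\ell^2$-estimate. Together with the lay-off contribution $T^{c_{k,n}}(\lambda_je_j)=\lambda_je_{j+c_{k,n}}$, expanded through Lemma \ref{lem7}, this yields (2a).

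For (2b), the point is that for $j\in[\xi_n+1,\xi_{n+1}]$ and any $k$ we have $j+c_{k,n}>\xi_n$, so the ``main'' outputs of $T^{c_{k,n}}f_j$ all sit above $\xi_n$. Mass can only appear in $[0,\xi_n]$ through the remainder $p_{1,n}(T)^{r'_1}\cdots p_{t',n}(T)^{r'_{t'}}e_\alpha$ of Lemma \ref{lem7} (when expanding $e_{j+c_{k,n}}$ for $j$ in a lay-off interval); its $H$-norm is bounded by $2^{h_nk_n}\sup_{u\leq h_nk_nd_n}\|e_u\|$, and the prefactor $\lambda_j$ is at worst $\ls 2^{\frac 12\sqrt{c_{1,n}}}$. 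Since $h_n, k_n, d_n$ depend only on the previous steps while $c_{1,n}$ is chosen afterwards, the product is forced below $\delta_n$. The main technical obstacle I foresee lies in the boundary cases of the fan, namely working intervals with $r_k=h_n$ (where $j+c_{k,n}$ exits the fan) and fan points for which $j+c_{k,n}$ spills into $[\xi_{n+1}+1,\xi_{n+2}]$: in both, the clean identity above breaks down and one must expand $e_{j+c_{k,n}}$ via Lemma \ref{lem7}, absorbing the resulting error into the small factors $\|e_u\|$ for $u$ at the beginning of a lay-off interval, which forces $\xi_{n+1}$ and the $c_{j,n+1}$ to be chosen very large with respect to everything fixed at earlier steps.
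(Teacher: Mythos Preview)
Your approach for (3) and for the generic part of (2) matches the paper's exactly: the two algebraic identities you state for $T^{c_{k,n}}f_j$ on a working interval (namely $4f_{j+c_{k,n}}$ when $k\le t$ and $r_k<h_n$, and $4f_{j+c_{k,n}}+p_{k,n}(T)f_j-p_{t,n}(T)f_{j+c_{k,n}-c_{t,n}}$ when $k>t$) are precisely what the paper derives, and the correction terms are handled by the same convolution-type estimate.

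There is, however, a genuine gap in your treatment of the boundary case $r_k=h_n$ with $k=t$. Here
\[
T^{c_{k,n}}f_j=\gamma_n^{-1}4^{1-|r|}\bigl(e_{j+c_{k,n}}-p_{k,n}(T)e_j\bigr),
\]
and while $e_{j+c_{k,n}}$ is indeed small (it sits at the beginning of a lay-off interval), the dangerous term is $\gamma_n^{-1}4^{1-|r|}\,p_{k,n}(T)e_j$, where $j$ lies in a \emph{working} interval, so $\|e_j\|$ is not small. What must be expanded via Lemma~\ref{lem7} is $e_j$, not $e_{j+c_{k,n}}$. The prefactors $\gamma_n 4^{|r'|-1}$ appearing in that lemma cancel against $\gamma_n^{-1}4^{1-|r|}$, and one is left with a sum of the shape $\sum_l\sum_{s_l}4^{-(|r|-|r'|)}\,2^{s_l+r_{l+1}+\cdots}$, which converges geometrically because $|p_{l,n}|\le 2<4$; the residual tail $k_n2^{-h_n}$ forces $h_n$ to be large with respect to $k_n$. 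This is the one place where $h_n$ is actually constrained, and it is an \emph{internal step-$n$} condition --- it has nothing to do with $\xi_{n+1}$ or the $c_{j,n+1}$'s. Your proposed mechanism (``absorb into small $\|e_u\|$ for $u$ at the beginning of a lay-off interval'') does not see this and would not close the estimate.

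Two minor corrections. For $j$ in a lay-off interval, the lattice structure forces $j+c_{k,n}$ to lie in a lay-off interval as well, so Lemma~\ref{lem7} is not invoked there: one just computes the ratio $\lambda_j/\lambda_{j+c_{k,n}}$, which equals $1$ in the generic case and is tiny when the $k$-th coordinate saturates. And the spill-over into $[\xi_{n+1}+1,\xi_{n+2}]$ is not a separate difficulty: past the last working interval everything up to $\xi_{n+1}$ is one long lay-off, so this is already covered by the lay-off estimate.
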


\begin{proof}
$\bullet$ The easy part of the proof is assertion (3): if $x$ is
supported in $[0,\xi _{n}]$ and $m<\xi _{n}/2$, $T^{m}x$ is supported in
the interval $[0, (3/2)\xi
_{n}]$. If $x=\sum_{j=0}^{\xi _{n}}\alpha
_{j}e_{j}$, $T^{m}x=\sum_{j=0}^{\xi _{n}}\alpha
_{j}e_{j+m}$. Now $e_{j+m}$ can have a non-zero component in the
interval $[\xi _{n}+1, \xi _{n+1}]$ only in the case where $j+m\in [\xi
_{n}+1, \xi
_{n}+m]\subseteq[\xi _{n}+1, (3/2)\xi_{n}]$, so
$
\pi_{[\xi _{n}+1,\xi _{n+1}]}(T^{m}x)=\pi_{[\xi _{n}+1,\xi _{n+1}]}
(\sum_{j=\xi _{n}+1-m}^{\xi _{n}}\alpha _{j}e_{j+m}).$ But $[\xi _{n}+1,
(3/2)\xi_{n}]$ is contained in beginning of the first \loi\ $[\xi _{n}+1,
c_{1,n}]$ if $c_{1,n}$ is large enough, so $||e_{j+m}||$ is very small,
and $||\pi_{[\xi _{n}+1,\xi _{n+1}]}(T^{m}x)||\ls 2^{-\frac
{1}{2}\sqrt{c_{1,n}}}$ which can be made smaller than $\delta_{n}$.
\par\smallskip
$\bullet$ Fix $1\leq k\leq k_{n}$. Let us first look at
$T^{c_{k,n}}f_{j}$ for $j$ in a \loi. Since it would be rather intricate
to write down all the possible cases, we give an example of each one of
the situations which can occur:
\par\smallskip
-- if $j\in c_{k,n}+\xi _{n}+1, 2c_{k,n}-1]$, then $T^{c_{k,n}}f_{j}=
\lambda _{j}e_{j+c_{k,n}}$ and $j+c_{k,n}\in [2c_{k,n}+\xi _{n}+1, 3c_{k,n}
-1]$. Thus $e_{j+c_{k,n}}=(1/{\lambda _{j+c_{k,n}}})f_{j+c_{k,n}}$
and since $\lambda _{j}=\lambda _{j+c_{k,n}}$, $T^{c_{k,n}}f_{j}=
f_{j+c_{k,n}}$.
\par\smallskip
-- if $j\in [h_{n}c_{k,n}+\xi _{n}+1, c_{1,n}+h_{n}c_{k,n}-1]$, $T^{c_{k,n}}f_{j}=
({\lambda _{j}}/{\lambda _{j+c_{k,n}}})f_{j+c_{k,n}}$ and $j+c_{k,n}\in
[(h_{n}+1)c_{k,n}+\xi _{n}+1, c_{1,n}+(h_{n}+1)c_{k,n}-1]$ which is
contained in the beginning of the \loi\
$[h_{n}(c_{1,n}+\dots+c_{k,n})+\xi _{n}+1, c_{k+1,n}-1]$ whose length is
approximately less than $2^{\sqrt{c_{k+1,n}}}$. Hence $|\lambda _{j}|\ls
2^{\frac {1}{2}
\sqrt{c_{1,n}}}$
and $|\lambda _{j+c_{k,n}}|\gs 2^{\sqrt{c_{k+1,n}}}$ so the quotient
$\lambda _{j}/\lambda _{j+c_{k,n}}$ is extremely small.
\par\smallskip
All the situations which can occur reproduce one of these two situations,
and we leave the reader to work out the details by himself.
\par\smallskip
$\bullet$ Then let us consider the case where $j$ belongs to a working
interval $I_{r_{1},\dots, r_{t}}$ with $k\leq t$: $T^{c_{k,n}}f_{j}=
\gamma _{n}^{-1}4^{1-|r|}
(e_{j+c_{k,n}}-p_{t,n}(T)e_{j+c_{k,n}-c_{t,n}})$.
\par\smallskip
-- If $r_{k}<h_{n}$, then $T^{c_{k,n}}f_{j}=4f_{j+c_{k,n}}$
since $j+c_{k,n}$ belongs to $I_{r_{1},\dots, r_{k}+1, \dots, r_{t}}$.
\par\smallskip
-- If $r_{k}=h_{n}$ and $k<t$, $j+c_{k,n}\in [r_{1}c_{1,n}+\dots+
(h_{n}+1)c_{k,n}+\dots+r_{t}c_{t,n}, r_{1}c_{1,n}+\dots+
(h_{n}+1)c_{k,n}+\dots+r_{t}c_{t,n}+\xi _{n}+1]$ which is contained in
the beginning of the \loi\ $[r_{1}c_{1,n}+\dots+
h_{n}c_{k,n}+\dots+r_{t}c_{t,n}+\xi _{n}+1, r_{1}c_{1,n}+\dots+
h_{n}c_{k,n}+(r_{k+1}+1)c_{k+1,n}+\dots+r_{t}c_{t,n}-1]$ if $r_{k+1}
<h_{n}$ (else we have to move over to the first $s$ with $r_{s}<h_{n}$ if
there is one, or else in the last \loi. We leave this to the reader). So
$||e_{j+c_{k,n}}||\ls 2^{-\frac {1}{2}\sqrt{c_{k+1,n}}}$. In the same way
$||e_{j+c_{k,n}-c_{t,n}}||\ls 2^{-\frac {1}{2}\sqrt{c_{k+1,n}}}$, and
since $||p_{t,n} (T)||\leq 2||T||^{d_{n}}\leq 2.2^{d_{n}}$ for instance,
and we get that $||T^{c_{k,n}}f_{j}||$ can be made arbitrarily small.
\par\smallskip
-- It is in the case where $r_{k}=h_{n}$ and $k=t$ that the
condition on $h_{n}$ appears, and this case has to be worked out
carefully: $T^{c_{k,n}}f_{j}=
\gamma _{n}^{-1}4^{1-|r|}(e_{j+c_{k,n}}-p_{k,n}(T)e_{j})$. As before $j+c_{k,n}
\in [r_{1}c_{1,n}+\dots+
(h_{n}+1)c_{k,n}, r_{1}c_{1,n}+\dots+ (h_{n}+1)c_{k,n}+\xi _{n}+1]$ which
is contained in the beginning of a \loi\
so $||e_{j+c_{k,n}}||\ls 2^{-\frac {1}{2}\sqrt{c_{1,n}}}$, the first
term in the expression of $T^{c_{k,n}}f_{j}$ can be made arbitrarily
small in norm, and thus is not a problem. Then we have to estimate the
quantity $||\gamma _{n}^{-1} 4^{1-|r|}\sum_{j\in I_{r_{1},\dots, h_{n}}}
x_{j}\,p_{k,n}(T)e_{j}||.$ By Lemma \ref{lem7},
\begin{eqnarray*}
p_{k,n}(T)e_{j}&=&p_{k,n}(T)(e_{j})_{1}+p_{k,n}(T)(e_{j})_{2}
=\Bigl(\sum_{l=1}^{k}
\sum_{s_{l}=0}^{r_{l}}
\gamma _{n} 4^{r_{1}+\dots r_{l-1}+(r_{l}-s_{l})-1}\\
&&
p_{l,n}(T)^{s_{l}}p_{l+1,n}(T)^{r_{l+1}}\dots p_{k,n}(T)^{h_{n}+1}
f_{r_{1}c_{1,n}+\dots+
r_{l-1}c_{l-1,n}+(r_{l}-s_{l})c_{l,n}+\alpha}\Bigr)\\
&+&
p_{1,n}(T)^{r_{1}}\dots p_{k,n}(T)^{h_{n}+1}e_{\alpha }
\end{eqnarray*}
where $j=r_{1}c_{1,n}+\dots+ h_{n}c_{k,n}+\alpha $ with $\alpha \in
[0,\xi _{n}]$.
The polynomial $$p_{l,n}(T)^{s_{l}}p_{l+1,n}(T)^{r_{l+1}}\dots p_{k,n}(T)^{h_{n}+1} $$ is of degree at most $(h_n+1)k_nd_n$
and its modulus
is less than $2^{s_l+r_{l+1}+\dots+h_n+1}$. When expanding the expression
$$p_{l,n}(T)^{s_{l}}p_{l+1,n}(T)^{r_{l+1}}\dots p_{k,n}(T)^{h_{n}+1}
f_{r_{1}c_{1,n}+\dots+
r_{l-1}c_{l-1,n}+(r_{l}-s_{l})c_{l,n}+\alpha} ,$$
two kind of terms appear:

-- multiples of
$ f_{r_{1}c_{1,n}+\dots+
r_{l-1}c_{l-1,n}+(r_{l}-s_{l})c_{l,n}+\alpha+u} $ for
$u\leq \xi_n$: this corresponds to ``small values'' of $u$,
for which $$T^u f_{r_{1}c_{1,n}+\dots+
r_{l-1}c_{l-1,n}+(r_{l}-s_{l})c_{l,n}+\alpha}=f_{r_{1}c_{1,n}+\dots+
r_{l-1}c_{l-1,n}+(r_{l}-s_{l})c_{l,n}+\alpha+u}.$$

-- multiples of
$T^uf_{r_{1}c_{1,n}+\dots+
r_{l-1}c_{l-1,n}+(r_{l}-s_{l})c_{l,n}+\alpha}$ for $\xi_n+1\leq
u\leq
\xi_n+(h_n+1)k_nd_n$ corresponding to ``large values'' of $u$.

For the first terms the norm can be directly computed, and for the second terms it suffices to notice that the expression of $T^uf_{r_{1}c_{1,n}+\dots+
r_{l-1}c_{l-1,n}+(r_{l}-s_{l})c_{l,n}+\alpha}$ involves only vectors $e_i$ for $i$ in the beginning of two \loi s between (c)-fans. Since the length of these intervals is roughly larger than $c_{1,n}$ which is much larger than $\xi_n$, $h_n$, $k_n$, $d_n$,
$||T^uf_{r_{1}c_{1,n}+\dots+
r_{l-1}c_{l-1,n}+(r_{l}-s_{l})c_{l,n}+\alpha}||$ is very small. This shows that
\begin{eqnarray*}
||
\sum_{j\in I_{r_{1},\dots, h_{n}}}x_{j}p_{k,n}(T)(e_{j})_{1}||
&\ls&\Bigl(
\sum_{l=1}^{k}
\sum_{s_{l}=0}^{r_{l}}\gamma_n 4^{r_1+\dots r_{l-1}+(r_{l}-s_{l})-1}
 2^{s_{l}+r_{l+1}+\dots+h_{n}+1)}\\
&&+2^{-\frac{1}{2}\sqrt{c_1,n}}
 \Bigr)
\;\Bigl(\sum_{j\in I_{r_{1},\dots, h_{n}}} |x_{j}|^{2}
 \Bigr)^{\frac {1}{2}}
\end{eqnarray*} so that
$$
||\gamma_n^{-1}4^{1-|r|}
\sum_{j\in I_{r_{1},\dots, h_{n}}}x_{j}p_{k,n}(T)(e_{j})_{1}||$$ is
approximately less than
$$
2 \,\Bigl(\sum_{s_{k}=0}^{h_{n}}2^{-s_{k}}+2^{-h_{n}}
 \sum_{s_{k-1}=0}^{h_{n}}2^{-s_{k-1}}+\dots
+
 2^{-(r_{2}+\dots+h_{n})}\sum_{s_{1}=0}^{h_{n}}2^{-s_{1}}+2^{-\frac{1}{2}\sqrt{c_1,n}}\Bigr)
 \Bigl(\sum_{j\in I_{r_{1},\dots, h_{n}}} |x_{j}|^{2}
 \Bigr)^{\frac {1}{2}}$$
 which is in turn less than
 \begin{eqnarray*}
4\;(1+k_{n}2^{-h_{n}}+2^{-\frac{1}{2}\sqrt{c_1,n}})\Bigl(\sum_{j\in I_{r_{1},\dots, h_{n}}} |x_{j}|^{2}
 \Bigr)^{\frac {1}{2}}
\leq 5\;\Bigl(\sum_{j\in I_{r_{1},\dots, h_{n}}} |x_{j}|^{2}
 \Bigr)^{\frac {1}{2}}
\end{eqnarray*}

if $h_{n}$ is large enough \wrt\ $k_{n}$. Then we estimate in the same way
$$
||\gamma _{n}^{-1}4^{1-|r|}
\sum_{j\in I_{r_{1},\dots, h_{n}}}x_{j}p_{k,n}(T)(e_{j})_{2}||$$
which is roughly less than $$\gamma _{n}^{-1}4^{1-|r|} 2^{|r|}
\Bigl(\sup_{u\leq \xi _{n}
+(h_{n}+1)k_{n}d_{n}} ||e_{u}||\Bigr)\;
\Bigl(\sum_{j\in I_{r_{1},\dots, h_{n}}}
|x_{j}|^2\Bigr)^{\frac {1}{2}}.
$$
Since $c_{1,n}$ is very large \wrt\ $(h_{n}+1)k_{n}d_{n}$, $||e_{u}||\ls
2^{-\frac {1}{2}\sqrt{c_{1,n}}}$ for $\xi _{n}+1\leq u \leq \xi _{n}
+(h_{n}+1)k_{n}d_{n}$. Recalling that $|r|=r_{1}+\dots+h_{n}$, we get
that
$$
||\gamma _{n}^{-1}4^{1-|r|}
\sum_{j\in I_{r_{1},\dots, h_{n}}}x_{j}(e_{j})_{2}||
\ls
C_{\xi _{n}}2^{-h_{n}}\Bigl(\sum_{j\in I_{r_{1},\dots, h_{n}}} |x_{j}|^{2}
 \Bigr)^{\frac {1}{2}}
$$
where $C_{\xi _{n}}$ depends only on $\xi _{n}$, and this is very small
if $h_{n}$ is large enough. Putting together all the estimates above, we
get that
$$||T^{c_{k,n}}\Bigl(\sum_{j\in I_{r_{1},\dots, h_{n}}}x_{j}
f_{j}\Bigr)||\leq 6\, \Bigl(\sum_{j\in I_{r_{1},\dots, h_{n}}}
|x_{j}|^2\Bigr)^{\frac {1}{2}}.$$
\par\smallskip
$\bullet$ It remains to study the case where $k>t$: for $j\in
I_{r_{1},\dots,r_{t}}$,
$$T^{c_{k,n}}f_{j}=
\gamma _{n}^{-1}4^{1-|r|}(e_{j+c_{k,n}}-p_{t,n}(T)e_{j+c_{k,n}
-c_{t,n}}).$$ Since $j+c_{k,n}
$ belongs to $ [r_{1}c_{1,n}+\dots+ r_{t}c_{t,n}+c_{k,n},
r_{1}c_{1,n}+\dots+ r_{t}c_{t,n}+c_{k,n}+\xi _{n}]$,
$f_{j+c_{k,n}}=\gamma _{n}^{-1}4^{-|r|}(e_{j+c_{k,n}}-p_{k,n}(T)e_{j}), $
so we have $$T^{c_{k,n}}f_{j}= 4f_{j+c_{k,n}}+
\gamma _{n}^{-1}4^{1-|r|}p_{k,n}(T)e_{j}-\gamma _{n}^{-1}4^{1-|r|}p_{t,n}(T)e_{j+c_{k,n}
-c_{t,n}}.$$ Now
$f_{j}=\gamma _{n}^{-1}4^{1-|r|}(e_{j}-p_{t,n}(T)e_{j-c_{t,n}})$
 so that
$$p_{k,n }(T)f_{j}=
\gamma _{n}^{-1}4^{1-|r|}(p_{k,n}(T)e_{j}-
p_{k,n}(T)p_{t,n}(T)e_{j-c_{t,n}}),$$ and $f_{j+c_{k,n}-c_{t,n}}=
\gamma _{n}^{-1}4^{1-|r|}(e_{j+c_{k,n}-c_{t,n}}-
p_{t,n}(T)e_{j-c_{t,n}})$ so that
$$p_{t,n }(T)f_{j+c_{k,n}-c_{t,n}}=\gamma _{n}^{-1}4^{1-|r|}(p_{t,n}(T)
e_{j+c_{k,n}-c_{t,n}}-p_{k,n}(T)p_{t,n}(T)e_{j-c_{t,n}}).$$ Hence
$T^{c_{k,n}}f_{j}=4f_{j+c_{k,n}}+p_{k,n}(T)f_{j}-p_{t,n}(T)f_{j+c_{k,n}
-c_{t,n}}$.
We then estimate $$||T^{c_{k,n}}\Bigl(\sum_{j\in
 I_{r_{1},\dots, r_{t}}}x_{j}
f_{j}\Bigr)||$$ as previously, writing $p_{k,n}(\zeta
)=\sum_{u=0}^{d_{n}} a_{u}^{(k)}\zeta ^{u}$:
\begin{eqnarray*}
&&||\sum_{u=0}^{d_{n}} a_{u}^{(k)}\sum_{j\in
 I_{r_{1},\dots, r_{t}}}x_{j}T^{u}f_{j}||\leq||
 \sum_{u=0}^{d_{n}}
a_{u}^{(k)}\sum_{\alpha =0}^{\xi _{n}-u}x_{r_{1}c_{1,n}+\dots+
r_{t}c_{t,n}+\alpha }f_{r_{1}c_{1,n}+\dots+ r_{t}c_{t,n}+\alpha+u }||\\
&&\qquad\qquad+||
 \sum_{u=0}^{d_{n}}
a_{u}^{(k)}\sum_{\alpha =\xi _{n}-u+1}^{\xi _{n}}x_{r_{1}c_{1,n}+\dots+
r_{t}c_{t,n}+\alpha }T^{u}f_{r_{1}c_{1,n}+\dots+ r_{t}c_{t,n}+\alpha}||
\end{eqnarray*}
and just as before the first term is less than $2
\Bigl(\sum_{j\in I_{r_{1},\dots, r_{t}}}
|x_{j}|^2\Bigr)^{\frac {1}{2}}$ while the second term is less than
$\varepsilon _{n}\Bigl(\sum_{j\in I_{r_{1},\dots, r_{t}}}
|x_{j}|^2\Bigr)^{\frac {1}{2}}$ with $\varepsilon _{n}$ arbitrarily
small.
\par\smallskip
It remains to put all the estimates together, and this finishes the
proof.
\end{proof}

We are now ready for the proof of Proposition \ref{prop5}:

\begin{proof}[Proof of Proposition \ref{prop5}]
For $x$ such that $\pi_{[0,\xi _{n}]}(x)=0$, let us decompose
$T^{c_{k,n}}x$ as
\begin{eqnarray*}
&&T^{c_{k,n}}x=
\pi_{[0,\xi _{n}]} (T^{c_{k,n}}(x))
+\sum_{l=n}^{+\infty }\pi_{[\xi _{l}+1,\xi _{l+1}]}(T^{c_{k,n}}x)\\
&&=\sum_{k=n}^{+\infty }
\pi_{[0,\xi _{n}]} (T^{c_{k,n}}(\pi_{[\xi _{k}+1,\xi _{k+1}]}(x)))
+\sum_{l=n}^{+\infty }\sum_{k=l-1}^{+\infty }
\pi_{[\xi _{l}+1,\xi _{l+1}]}\left(T^{c_{k,n}}(\pi_{[\xi _{k}+1,\xi _{k+1}]}(x))
\right).
\end{eqnarray*}
Indeed if the sequence $(\xi _{n})$ grows fast enough,
$T^{c_{k,n}}(\pi_{[\xi _{k}+1,\xi _{k+1}]}(x))$ for $k\geq n$  is
supported by $[0,\xi_{k+2}]$. So
\begin{eqnarray*}
||T^{c_{k,n}}x||^{2}&\leq & ||\sum_{k=n}^{+\infty }
\pi_{[0,\xi _{n}]} (T^{c_{k,n}}(\pi_{[\xi _{k}+1,\xi _{k+1}]}(x)))||^{2}\\
&+&\sum_{l=n}^{+\infty }||\sum_{k=l-1}^{+\infty }
\pi_{[\xi _{l}+1,\xi _{l+1}]}\left(T^{c_{k,n}}(\pi_{[\xi _{k}+1,\xi _{k+1}]}(x))
\right)||^{2}.
\end{eqnarray*}
Using the inequality $||a_{1}+\ldots+a_{j}||^{2}\leq
\sum_{i=1}^{j}2^{j+1-i} ||a_{i}||^{2}$ valid for every $j$-tuple
$(a_{1},\ldots, a_{j})$ of vectors of $H$, we get
\begin{eqnarray*}
||T^{c_{k,n}}x||^{2}&\leq &
\sum_{k=n}^{+\infty }2^{k+1-n}||
\pi_{[0,\xi _{n}]} (T^{c_{k,n}}(\pi_{[\xi _{k}+1,\xi _{k+1}]}(x)))||^{2}\\
&+&\sum_{l=n}^{+\infty }\sum_{k=l-1}^{+\infty }2^{k+2-l}||
\pi_{[\xi _{l}+1,\xi _{l+1}]}\left(T^{c_{k,n}}(\pi_{[\xi _{k}+1,\xi _{k+1}]}(x))
\right)||^{2}.
\end{eqnarray*}
This yields
\begin{eqnarray*}
||T^{c_{k,n}}x||^{2}&\leq & 2\,||
\pi_{[0,\xi _{n}]} (T^{c_{k,n}}(\pi_{[\xi _{n}+1,\xi _{n+1}]}(x)))||^{2}\\
&+&
\sum_{k=n+1}^{+\infty }2^{k+1-n}||
\pi_{[0,\xi _{n}]} (T^{c_{k,n}}(\pi_{[\xi _{k}+1,\xi _{k+1}]}(x)))||^{2}\\
&+&
\sum_{k=n-1}^{+\infty }||\sum_{l=n}^{k+1} 2^{k+2-l}
||\pi_{[\xi _{l}+1,\xi _{l+1}]}\left(T^{c_{k,n}}(\pi_{[\xi _{k}+1,\xi
_{k+1}]}(x))
\right)||^{2}.
\end{eqnarray*}
Now if $k\geq n+1$, $c_{k,n}<\xi _{k}/2$, so that by (1b')
\begin{eqnarray*}
||
\pi_{[0,\xi _{n}]} (T^{c_{k,n}}(\pi_{[\xi _{k}+1,\xi _{k+1}]}(x)))||&\leq
& ||
\pi_{[0,\xi _{k}]} (T^{c_{k,n}}(\pi_{[\xi _{k}+1,\xi _{k+1}]}(x)))||\\
&\leq& \delta _{k} || \pi_{[\xi _{k}+1,\xi _{k+1}]}(x)|| .
\end{eqnarray*}
If $n\leq l<k$, by (2b)
\begin{eqnarray*}
||\pi_{[\xi _{l}+1,\xi _{l+1}]}\left(T^{c_{k,n}}(\pi_{[\xi _{k}+1,\xi
_{k+1}]}(x))
\right)||& \leq &||
\pi_{[0,\xi _{k}]} (T^{c_{k,n}}(\pi_{[\xi _{k}+1,\xi _{k+1}]}(x)))||\\
&\leq& \delta _{k} || \pi_{[\xi _{k}+1,\xi _{k+1}]}(x)|| .
\end{eqnarray*}
If $k=l\geq n$, then by (2a)
\begin{eqnarray*}
||\pi_{[\xi _{k}+1,\xi _{k+1}]}\left(T^{c_{k,n}}(\pi_{[\xi _{k}+1,\xi
_{k+1}]}(x))
\right)||& \leq &
(1+\delta _{k}) || \pi_{[\xi _{k}+1,\xi _{k+1}]}(x)|| ,
\end{eqnarray*}
and lastly for $l=k+1$, $k\geq n-1$, by (3)
\begin{eqnarray*}
||\pi_{[\xi _{k+1}+1,\xi _{k+2}]}\left(T^{c_{k,n}}(\pi_{[\xi _{k}+1,\xi
_{k+1}]}(x))
\right)||& \leq & \delta _{k+1} || \pi_{[\xi _{k}+1,\xi _{k+1}]}(x)|| .
\end{eqnarray*}
Putting everything together yields that
\begin{eqnarray*}
||T^{c_{k,n}}x||^{2}& \leq & 2\,\delta _{n}^{2}||\pi_{[\xi _{n}+1,\xi
_{n+1}]}(x)||^{2} +\sum_{k=n}^{+\infty }
\left(\sum_{l=n}^{k+1}2^{k+2-l}\right)\delta _{k}^{2} \,||\pi_{[\xi _{k}+1,\xi
_{k+1}]}(x)||^{2}\\
&+&\sum_{k=n}^{+\infty }4\,(1+\delta _{k})^{2}||
\pi_{[\xi _{k}+1,\xi
_{k+1}]}(x)||^{2}\\
&+&\sum_{k=n}^{+\infty }2\,\delta _{k+1}^{2}||
\pi_{[\xi _{k}+1,\xi
_{k+1}]}(x)||^{2}.
\end{eqnarray*}
Since $\sum_{k=n}^{+\infty }||
\pi_{[\xi _{k}+1,\xi
_{k+1}]}(x)||^{2}\leq ||x||^2$, we get that if $\delta _{n}$ goes fast enough to zero then
 $||T^{c_{k,n}}x||\leq 100\, ||x||$
and we are done.
\end{proof}

The road to Theorem \ref{th1} is now clear.
\begin{proof}[Proof of Theorem \ref{th1}]
Let us choose for every $n\geq 1$ the family $(p_{k,n})_{1\leq k\leq
k_{n}}$ to form a $4^{-\xi _{n}}$ net of the closed ball of $\K_{n}[\zeta
]$ of radius $2$ (we take $d_{n}=n$ here). Then for every polynomial $q$
with $|q|\leq 2$ and for every $n$ greater than
the degree of $q$, there exists a $k\in [1,k_{n}]$ such
that $||p_{k,n}(T)-q(T)||\leq |p_{k,n}-q|\,.\, ||T||^{n}\leq
4^{-\xi _{n}}2^{n}\leq 2^{-n}$ if $||T||\leq 2$ for instance. Let us then estimate
for $x\in H$ the quantity $||T^{c_{k,n}}x-q(T)x||$:
\begin{eqnarray*}
||T^{c_{k,n}}x-q(T)x||&\leq& ||T^{c_{k,n}}(x-\pi_{[0,\xi
_{n}]}x)||+||T^{c_{k,n}}
\pi_{[0,\xi _{n}]}x-p_{k,n}(T)\pi_{[0,\xi _{n}]}x||\\
&+& ||p_{k,n}(T)\pi_{[0,\xi _{n}]}x-q(T)\pi_{[0,\xi _{n}]}x||+
||q(T)(x-\pi_{[0,\xi _{n}]}x)||\\
&\leq& 100\, ||x-\pi_{[0,\xi_{n}]}x || +\delta_n ||x||+2^{-n}||x||+||q(T)||\,||x-\pi_{[0,\xi
_{n}]}x||.
\end{eqnarray*}
Since $\delta _{n}$ and $||x-\pi_{[0,\xi_{n}]}x||$ go to zero as $n$ goes
to infinity, we see that $||T^{c_{k,n}}x-q(T)x||$ can be made arbitrarily
small. This implies that for every polynomial $q$ with $|q|\leq 2 $,
every $\varepsilon >0$ and every $x\in H$, there exists an integer $r$
 such that $||T^{r}x-p(T)x||<\varepsilon
$. Now if $p$ is any polynomial with $|p|\leq 2 ^{j}$ for some
nonnegative integer $j$, then for every $\varepsilon >0$ and every $x\in
H$ there exists an integer $r_{j} $ such that
$||T^{r_{j}}x-2^{-j}p(T)x||<\varepsilon 2^{-2j}$. Then $||2T^{r_{j}}x-
2^{-(j-1)}p(T)x||<\varepsilon 2^{-(2j-1)}$,
 and there exists an integer $r_{j-1}$
such that $||T^{r_{j-1}}x-2T^{r_{j}}x||<\varepsilon 2^{-(2j-1)}$. Hence
$||T^{r_{j-1}}x-2^{-(j-1)}p(T)x||<\varepsilon 2^{-2(j-1)}$. Continuing in
this fashion, we obtain an integer $r_{0}$ such that
$||T^{r_{0}}x-p(T)x||<\varepsilon $. Finally, notice that $e_{0}$ is a
cyclic vector for $T$ by construction, so it is in fact a \hy\ vector.
\end{proof}

\begin{remark}
For the proof of Theorem \ref{th1}, one actually does not need the full
complexity of the (c)-fan as presented here. It would be sufficient to
consider at each step $n$ only one polynomial $p_{n}$ and its associated fan
consisting of the intervals $[rc_{n}, rc_{n}+\xi_{n}]$, $r\in[0,h_{n}]$.
But we will need to be able to handle several polynomials $p_{1,n}, \ldots, p_{k_{n},n}$
at each step in the proof of Theorem \ref{th2bis}, and this is why we
present the complete (c)-fan already here.
\end{remark}

\section{Exhibiting hypercyclic vectors: the role of the (b)-fan}
Let $x$ be any non-zero vector of $H$ with $||x||\leq 1$. The starting
point of the proofs in \cite{R2} or \cite{R3} that $x$ must be cyclic for
$T$ is the following argument: consider the space $F_{\xi _{n}}=
\textrm{sp}[e_{j} \textrm{ ; } 0\leq j\leq \xi _{n}]
=\textrm{sp}[f_{j} \textrm{ ; } 0\leq j\leq \xi _{n}]$
and the \op\ $T_{\xi _{n}}$ on it which is the truncated forward shift on
$F_{\xi _{n}}$: $T_{\xi _{n}}e_{j}=e_{j+1}$ for $j<\xi _{n}$ and $T_{\xi
_{n}}e_{\xi _{n}}=0$. Write
$$\pi_{[0,\xi _{n}]}x=
\sum_{j=r_{n}}^{\xi _{n}}\alpha _{j}^{(n)}e_{j} =
\sum_{j=r_{n}}^{\xi _{n}}e_{j}^{*(n)}(x)e_{j}\;\;\textrm{ with
}\;\;\alpha _{r_{n}}
^{(n)}=e _{r_{n}}
^{*(n)}(x)\not =0$$ (since $x$ is non-zero, this is always possible if $n$
is large enough). The functionals $e_{j}^{*(n)}$, $j=0,\dots, \xi _{n}$
are the coordinate functionals \wrt\ the basis $(e_{j})_{j=0,\dots, \xi
_{n}}$ of $F_{\xi _{n}}$.
 Then it is easy to see that the linear orbit
of $\pi_{[0,\xi _{n}]}x$ under $T_{\xi _{n}}$ is $\textrm{sp}[ e_{j}
\textrm{ ; } r_{n}\leq j\leq \xi _{n}]$. Hence if one of the vectors
$e_{j}$, $r_{n}\leq j\leq \xi _{n}$ is sufficiently close to $e_{0}$ for
instance, then there exists a polynomial $p$ of degree less than $\xi
_{n}$ such that $||p(T_{\xi _{n}})\pi_{[0,\xi _{n}]}x
-e_{0}||$ is very small, and the difficulty is to estimate the
tail terms in order to show that one must have $||p(T)x-e_{0}||$ very
small too. The obvious way to start this is to estimate
$||p(T)\pi_{[0,\xi _{n}]}x-e_{0}||$: if $p(\zeta)=\sum_{u=0}^{\xi _{n}}
a_{u}\xi ^{u} $ and $0\leq j\leq \xi _{n}$, $p(T)e_{j}=\sum_{u=0}^{\xi
_{n}}a_{u}e_{j+u}$ and $p(T_{\xi _{n}})e_{j}=\sum_{u=0}^{\xi
_{n}-j}a_{u}e_{j+u}$
 so that $(p(T)-p(T_{\xi _{n}}))e_{j}=\sum_{u=\xi _{n}-j+1}^{\xi _{n}}
 a_{u}e_{j+u}$. Hence
\begin{eqnarray*}
&&||(p(T)-p(T_{\xi _{n}}))\pi_{[0,\xi _{n}]}x||=||\sum_{j=0}^{\xi _{n}}
\alpha _{j}^{(n)}\sum_{u=\xi _{n}-j+1}^{\xi _{n}}a_{u}e_{j+u}||\\
&&\qquad\qquad=||\sum_{u=0}^{\xi _{n}}a_{u}\sum_{j=\xi _{n}-u+1}^{\xi
_{n}}
\alpha _{j}^{(n)}e_{j+u}||\leq |p|\, C_{\xi _{n}}\,\sup_{\xi _{n}+1
\leq u\leq 2\xi _{n}}||e_{j+u}||\, ||x||.
\end{eqnarray*}

The quantity $\sup_{\xi _{n}+1
\leq u\leq 2\xi _{n}}||e_{j+u}||$ is very small compared to
 $\xi _{n}$ ($\ls 2^{-\frac {1}{2}\sqrt{c_{1,n}}}$) with our actual
 construction, so if $|p|$ is controlled by a constant depending
 only on $\xi _{n}$, the quantity
 $||(p(T)-p(T_{\xi _{n}}))\pi_{[0,\xi _{n}]}x||$ will be very
 small. The following fact is easy to prove, see the forthcoming Lemma \ref{lem1b} for a more precise estimate:

\begin{fact}
Let $\varepsilon _{\xi _{n}}$ be a positive constant depending only on
$\xi _{n}$. There exists a constant $C_{\xi _{n}}$ depending only on
$\xi_{n}$ such that for every $x\in H$, $||x||\leq 1$, such that $|\alpha
_{r_{n}}^{(n)}|\geq \varepsilon _{\xi
_{n}}$, for every $j\in [r_{n},\xi _{n}]$,
there exists a polynomial $p$ of degree less than $\xi _{n}$
 with $|p|\leq C_{\xi _{n}}$ such that $$||p(T_{\xi _{n}})
 \pi_{[0,\xi _{n}]}x-e_{j}||\leq \frac {1}{\xi _{n}} \cdot$$
\end{fact}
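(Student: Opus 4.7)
The plan is to reduce the problem to a linear algebra exercise on the finite-dimensional space $F_{\xi_n}$. The key observation is that the map $e_j\mapsto \zeta^j$ identifies $F_{\xi_n}$ with the quotient ring $\K[\zeta]/(\zeta^{\xi_n+1})$, and under this identification $T_{\xi_n}$ becomes multiplication by $\zeta$. The vector $\pi_{[0,\xi_n]}x$ corresponds to the polynomial
$$
\zeta^{r_n}\tilde q(\zeta),\qquad \tilde q(\zeta)=\sum_{k=0}^{\xi_n-r_n}\alpha^{(n)}_{r_n+k}\zeta^k,
$$
where by hypothesis $\tilde q(0)=\alpha^{(n)}_{r_n}\neq 0$ with $|\tilde q(0)|\geq \veps_{\xi_n}$.

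First I would prove the case $j=r_n$ by constructing $p(\zeta)=\sum_{k=0}^{\xi_n-r_n}b_k\zeta^k$ as the truncated inverse of $\tilde q$ modulo $\zeta^{\xi_n-r_n+1}$. The coefficients $b_k$ are computed by the explicit recurrence
$$
b_0=\frac1{\alpha^{(n)}_{r_n}},\qquad b_k=-\frac1{\alpha^{(n)}_{r_n}}\sum_{i=0}^{k-1}b_i\,\alpha^{(n)}_{r_n+k-i}\quad (k\geq 1),
$$
which exists since $\alpha^{(n)}_{r_n}\neq 0$. With this $p$ one has $p(T_{\xi_n})\pi_{[0,\xi_n]}x=e_{r_n}$ exactly, so trivially $\|p(T_{\xi_n})\pi_{[0,\xi_n]}x-e_{r_n}\|\leq 1/\xi_n$. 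For the general case $j\in[r_n,\xi_n]$, I would take $\zeta^{j-r_n}p(\zeta)$ and, if the degree exceeds $\xi_n$, reduce modulo $\zeta^{\xi_n+1}$; this does not change the action on $F_{\xi_n}$ (because $T_{\xi_n}^{\xi_n+1}=0$) and cannot increase the $|\cdot|$-norm.

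The step that requires some care is the bound on $|p|=\sum_k|b_k|$. The condition $\|x\|\leq 1$ together with the equivalence of the two norms $\|\,\cdot\,\|$ and $\|\,\cdot\,\|_0$ on the finite-dimensional space $F_{\xi_n}$ (already used in the proof of Fact \ref{fact4}) gives a constant $D_{\xi_n}$, depending only on $\xi_n$, such that $|\alpha^{(n)}_j|\leq D_{\xi_n}$ for every $j\in[0,\xi_n]$. Combined with $|\alpha^{(n)}_{r_n}|\geq \veps_{\xi_n}$, the recurrence yields
$$
|b_k|\leq \frac{D_{\xi_n}}{\veps_{\xi_n}}\sum_{i=0}^{k-1}|b_i|,
$$
from which an easy induction gives $|b_k|\leq \frac1{\veps_{\xi_n}}(1+D_{\xi_n}/\veps_{\xi_n})^k$ for $k\leq \xi_n-r_n$. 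Summing over $k\leq \xi_n$ provides a bound $|p|\leq C_{\xi_n}$ depending only on $\xi_n$.

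The main (but very mild) obstacle is purely bookkeeping: the constant $C_{\xi_n}$ produced by the recurrence is exponential in $\xi_n$, which looks alarming but is harmless since the statement only asks for a constant that depends on $\xi_n$ alone. All the parameters appearing in the estimate ($\veps_{\xi_n}$, the dimension $\xi_n+1$, and the norm-equivalence constant $D_{\xi_n}$) are fixed once the construction has been carried out up to step $n-1$, so this bound is compatible with the inductive structure of the overall construction.
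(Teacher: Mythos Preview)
Your argument is correct and is essentially the same as the paper's: the paper defers this Fact to Lemma~\ref{lem1b}, whose proof writes the equation $p(T_{\xi_n})\pi_{[0,\xi_n]}x=e_j$ as a lower-triangular linear system with $\alpha_{r_n}^{(n)}$ on the diagonal and inverts the matrix, which is exactly your truncated-inverse recurrence phrased in matrix form. Both approaches actually hit $e_j$ exactly (so the $1/\xi_n$ is not needed), and both produce a bound on $|p|$ of the shape $C/|\alpha_{r_n}^{(n)}|^{\xi_n-r_n+1}$, which under the hypothesis $|\alpha_{r_n}^{(n)}|\geq\varepsilon_{\xi_n}$ becomes a constant depending only on $\xi_n$.
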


Hence with our informal assumptions $||p(T_{\xi _{n}})
 \pi_{[0,\xi _{n}]}x-e_{0}||$ is very small. The next step is to
 control the tail $||p(T)(x-\pi_{[0,\xi _{n}]}x)||$, and for
 this a natural idea is to use the (c)-fan:
$$||T^{c_{k,n}}x-e_{0}||\leq 100\, ||x-\pi_{[0,\xi _{n}]}x||+||
(T^{c_{k,n}}
-p_{k,n}(T))\pi_{[0,\xi _{n}]}x||+||p_{k,n}(T)
\pi_{[0,\xi _{n}]}x-e_{0}||,
$$ and then to approximate the polynomial $p$ by some $p_{k,n}$
in such a way that $|p-p_{k,n}|\leq 4^{-\xi _{n}}$ for instance. But here
we run into a difficulty: $|p_{k,n}|\leq 2$ for every $n$ and $1\leq
k\leq k_{n}$, while $|p|$ may be very large. Since the proof of the
uniform estimates for the (c)-fan really requires a uniform bound on the
quantities $|p_{k,n}|$, we have to modify the construction so as to
ensure the existence of  a polynomial $q$ with $|q|$ small such that
$||(p(T)-q(T))\pi_{[0,\xi _{n}]}x||$ is very small, and then we will be
able to approximate $q $ by $p_{k,n}$. The \emph{(b)-fan} is introduced exactly
for this purpose: we will see that it ensures that
$$\left|\left|\Bigl(\frac {T^{b_{n}}}{b_{n}}-I\Bigr)T(\pi_{[0,\xi _{n}]}x)
\right|\right|\leq \frac {1}{b_{n}}||x||$$
where $b_{n}$ is very large \wrt\ $\xi _{n}$. Then if the polynomial $p$ can be written as
$p(\zeta)=\zeta p_0(\zeta)$,
$$||\Bigl(p(T)\frac {T^{b_{n}}}{b_{n}}-p(T)\Bigr)\pi_{[0,\xi _{n}]}x||
=||\Bigl(p_0(T)\frac {T^{b_{n}}}{b_{n}}-p_0(T)\Bigr)T(\pi_{[0,\xi _{n}]}x)||
\leq |p|2^{\xi _{n}}\frac {1}{b_{n}}||x||\leq \frac {C_{\xi _{n}}}{b_{n}}
||x||$$ will be extremely small, while $|q(\zeta )|=|p(\zeta )\frac
{\zeta ^{b_{n}}}{b_{n}}|
\leq \frac {C_{\xi _{n}}}{b_{n}}$ will be less than $1$ if $b_{n}$
is large enough. Then one has to approximate $q$ by some polynomial
$p_{k,n}$, but here another difficulty appears: the degree of $q$ is not
bounded by $\xi _{n}$ anymore, but by $\xi
_{n}+b_{n}$, which is much larger, and so one has to modify the
fan constructed in Section $2$, which we from now on call the
\emph{(c)-fan},
 accordingly. We now describe in more details the (b)-fan and the
modifications of the (c)-fan.

\subsection{Construction of the (b)-fan, modification of the
(c)-fan} The (b)-fan consists of $\xi _{n}$ intervals, which are
introduced between $\xi _{n}+1$ and the (c)-fan, and depend on a number
$b_{n}$ chosen extremely large \wrt\ $\xi _{n}$. The intervals of the
(b)-fan are the intervals $[r(b_{n}+1), rb_{n}+\xi
_{n}]$, $r=1,\dots, \xi_{n} $, and for $j$ in one of these
intervals $f_{j}$ is defined as
$$f_{j}=e_{j}-b_{n}e_{j-b_{n}}.$$ The intervals between the
(b)-working intervals are \loi s and they are of length approximately
$b_{n}$, but we modify slightly the definition of $\lambda _{j}$ for $j$
in a (b)-\loi, just for convenience's sake: for $j\in [rb_{n}+\xi _{n}+1,
(r+1)b_{n}-1]$, $$\lambda _{j}= 2^{(\frac {1}{2}{b_{n}}+rb_{n}+\xi
_{n}+1-j)/\sqrt{b_{n}}}$$ and for $j\in [\xi _{n}+1, b_{n}]$,
$$\lambda _{j}=2^{(\frac {1}{2}{b_{n}}+\xi _{n}+1-j)/\sqrt{b_{n}}}$$
(instead of using the length of the \loi\ in the definition we use
$b_{n}$ which is of the same order of magnitude). The (b)-fan terminates
at the index $\nu _{n}=\xi
_{n}(b_{n}+1)$.
\par\smallskip
We have not yet proved that $T$ remains bounded with this addition of the
(b)-fan, but admitting this for the time being, we can see immediately
that $T^{b_{n}}/b_{n}$ is very close to the identity \op\ on vectors of $F_{\xi
_{n}}$ of the form $x=\sum_{j=1}^{\xi _{n}} \alpha _{j}^{(n)}e_{j}$, which was one of the reasons for introducing this (b)-fan:

\begin{fact}\label{fact12}
For every $x$ supported in $[0,\xi _{n}]$,
$$\left|\left|\Bigl(\frac {T^{b_{n}}}{b_{n}}-I\Bigr)T(x)\right|\right|
\leq \frac {C_{\xi _{n}}}{b_{n}}||x||.$$
\end{fact}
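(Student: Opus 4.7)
The plan is to expand $x$ in the $e$-basis, push the calculation through $T^{b_n}/b_n$ one coordinate at a time, and exploit the defining identity of the (b)-fan to produce a pointwise cancellation of size $1/b_n$. Write $x=\sum_{j=0}^{\xi_n}\alpha_j e_j$ (this is legitimate since $\mathrm{sp}[f_0,\dots,f_{\xi_n}]=\mathrm{sp}[e_0,\dots,e_{\xi_n}]$), so that $Tx=\sum_{j=0}^{\xi_n}\alpha_j e_{j+1}$ and $T^{b_n}(Tx)=\sum_{j=0}^{\xi_n}\alpha_j e_{j+1+b_n}$.

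For each $j\in[0,\xi_n-1]$, the index $j+1+b_n$ lies in the first (b)-working interval $[b_n+1,b_n+\xi_n]$, where by construction $f_{j+1+b_n}=e_{j+1+b_n}-b_n e_{j+1}$. This yields the pointwise identity
\[
\Bigl(\frac{T^{b_n}}{b_n}-I\Bigr)e_{j+1}=\frac{1}{b_n}\,f_{j+1+b_n},
\]
whose $H$-norm is exactly $1/b_n$, because the $f_i$ are orthonormal. Summing and using the triangle inequality, followed by Cauchy--Schwarz and the equivalence of the norms $\|\cdot\|_0$ and $\|\cdot\|$ on the finite dimensional space $F_{\xi_n}$ (with equivalence constant $C_{\xi_n}$ depending only on $\xi_n$), we get
\[
\Bigl\|\Bigl(\frac{T^{b_n}}{b_n}-I\Bigr)T x\Bigr\|\leq \frac{1}{b_n}\Bigl(\sum_{j=0}^{\xi_n-1}|\alpha_j|^2\Bigr)^{1/2}\sqrt{\xi_n}\cdot\text{(const)}\leq\frac{C_{\xi_n}'}{b_n}\|x\|.
\]

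The only nuisance is the boundary index $j=\xi_n$, for which $j+1=\xi_n+1$ lies not in a working interval but at the very start of the first (b)-lay-off interval $[\xi_n+1,b_n]$, and $j+1+b_n=\xi_n+1+b_n$ lies at the very start of the next (b)-lay-off interval $[b_n+\xi_n+1,2b_n-1]$. However, by the definition of $\lambda_j$ on these lay-off intervals, both $\|e_{\xi_n+1}\|$ and $\|e_{\xi_n+1+b_n}\|$ are of order $2^{-\frac12\sqrt{b_n}}$, so the extra contribution $\alpha_{\xi_n}\bigl(\frac1{b_n}e_{\xi_n+1+b_n}-e_{\xi_n+1}\bigr)$ has norm at most $C_{\xi_n}\,2^{-\frac12\sqrt{b_n}}\|x\|$, which is far smaller than $C_{\xi_n}/b_n\,\|x\|$ once $b_n$ is chosen large enough with respect to $\xi_n$. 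Absorbing the constants into a single $C_{\xi_n}$ yields the claimed bound.

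The main (only) obstacle is this boundary bookkeeping, and it is entirely controlled by the choice of $b_n\gg\xi_n$, which was already built into the construction. No new analytic estimate is needed beyond the linear-algebraic identity $f_{j+1+b_n}=e_{j+1+b_n}-b_n e_{j+1}$ that defines the (b)-fan.
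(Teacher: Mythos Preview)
Your proof is correct and follows essentially the same approach as the paper: expand $x$ in the $e$-basis, use the defining identity $f_{j+1+b_n}=e_{j+1+b_n}-b_n e_{j+1}$ on the first (b)-working interval to get the $1/b_n$ cancellation for $j\le\xi_n-1$, and handle the boundary term $j=\xi_n$ via the lay-off estimates $\|e_{\xi_n+1}\|,\|e_{b_n+\xi_n+1}\|\lesssim 2^{-\frac12\sqrt{b_n}}$. The only cosmetic difference is that you invoke the triangle inequality and Cauchy--Schwarz where the paper implicitly uses orthonormality of the $f_{j+1+b_n}$ to get $\frac{1}{b_n}(\sum_{j<\xi_n}|\alpha_j|^2)^{1/2}$ directly.
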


\begin{proof}
Write $x=\sum_{j=0}^{\xi _{n}} \alpha _{j}^{(n)}e_{j}$. Then
\begin{eqnarray*}
\Bigl(\frac {T^{b_{n}}}{b_{n}}-I\Bigr)T(x)&=&\sum_{j=0}^{\xi _{n}} \alpha _{j}^{(n)}
\Bigl(\frac {1}{b_{n}}e_{j+b_{n}+1}-e_{j+1}\Bigr)\\
&=&
\sum_{j=0}^{\xi _{n}-1} \alpha _{j}^{(n)}
\frac {1}{b_{n}}f_{j+b_{n}+1}+
\alpha _{\xi_n}^{(n)}
\Bigl(\frac {1}{b_{n}}e_{b_{n}+\xi_n+1}-e_{\xi_n+1}\Bigr)
\end{eqnarray*}
because $f_{j+b_{n}}=e_{j+b_{n}}-b_{n}e_{j}$
for $j\in [1,\xi _{n}]$. Since $||e_{b_{n}+\xi_n+1}||
\ls 2^{-\frac{1}{2}\sqrt{b_n}}$ and
$||e_{\xi_n+1}||
\ls 2^{-\frac{1}{2}\sqrt{b_n}}$, the estimate of Fact \ref{fact12} follows.
\end{proof}

Fact \ref{fact12} motivates the introduction of the interval $[b_{n}+1,
b_{n}+\xi _{n}]$, but the role of the ``shades'' $[r(1+b_{n}),rb_{n}+\xi
_{n}]$, $r=2,\dots, \xi _{n}$ which appear afterwards is still obscure at
this stage of the construction. The motivation for this will be explained
later on.
\par\smallskip
Let us now explain why we have to modify the (c)-fan: using the previous
construction, we have seen that if $q(\zeta )=\frac {\zeta ^{b_{n}}}{
b_{n}}p(\zeta )=\frac {\zeta ^{b_{n}+1}}{
b_{n}}p_0(\zeta )$, then $|q|<1$, $q$ is of degree less than $b_{n}+\xi
_{n}$, so in particular less than $\nu _{n}$, and
$||q(T)\pi_{[0,\xi_{n}]}x-e_{0}|| $ is very small. Our goal is now to
approximate $q$ for $|\,.\,|$ by some polynomial $p_{k,n}$. With our
actual construction this is impossible, because the degree of $p_{k,n}$
is too large: if for instance we try to estimate $||Tf_{c_{k,n}+\xi
_{n}}||=||\gamma _{n}^{-1}(e_{c_{k,n}+\xi _{n}+1}
-p_{k,n}(T)e_{\xi _{n}+1})||$, the upper bound we get involves
$$\gamma_{n}^{-1}
\sup_{0\leq j\leq \xi _{n}+b_{n}}||e_{\xi _{n}+1+j}||$$
which is by no means small. So we have to increase the length of the
(c)-working intervals from $\xi _{n}$ to $\nu _{n}$ (recall that
$\nu_{n}=\xi _{n}(b_{n}+1)$ is the index of the last (b)-working interval), and to chose the
family $(p_{k,n})$ as a $4^{-\nu _{n}}$ net of the unit ball of the set
$\K_{\nu_{n} }[\zeta ]$ of polynomials of degree less than $\nu _{n}$. The (c)-fan
starts at $c_{1,n}$ very large \wrt\ $\nu
_{n}$, and the (c)-working intervals are
$[r_{1}c_{1,n}+\dots+c_{k_{n}}r_{k_{n},n},
r_{1}c_{1,n}+\dots+c_{k_{n}}r_{k_{n},n}+\nu _{n}]$, $r_{i}\in [0,h_{n}]$
for $i=1,\dots,k_{n}$. With this definition, the analogue of Fact
\ref{fact4} will be:

\begin{fact}\label{fact13}
Let $\delta _{n}$ be any small positive number. If $\gamma _{n}$ is small
enough, then for every vector $x$ supported in $[0,\nu_{n}]$ and every
$1\leq k\leq k_{n}$, $$||T^{c_{k,n}}x-p_{k,n}x||\leq \delta  _{n}||x||.$$
\end{fact}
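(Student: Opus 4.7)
The plan is to mimic verbatim the proof of Fact \ref{fact4}, with the role of $\xi_n$ taken over by $\nu_n$. Since $\textrm{sp}[e_0,\dots,e_{\nu_n}] = \textrm{sp}[f_0,\dots,f_{\nu_n}]$, any $x$ supported in $[0,\nu_n]$ can be written as $x = \sum_{j=0}^{\nu_n} \alpha_j^{(n)} e_j$, and therefore
$$T^{c_{k,n}} x = \sum_{j=0}^{\nu_n} \alpha_j^{(n)}\, e_{j+c_{k,n}}.$$
For each $j \in [0,\nu_n]$, the index $j+c_{k,n}$ lies in the basic (c)-working interval $I_{0,\dots,0,1} = [c_{k,n}, c_{k,n}+\nu_n]$, which is what forced us to lengthen the (c)-working intervals from $\xi_n$ to $\nu_n$ in the first place. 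By definition,
$$f_{j+c_{k,n}} = \gamma_n^{-1}\bigl(e_{j+c_{k,n}} - p_{k,n}(T)\, e_j\bigr),$$
so $e_{j+c_{k,n}} = \gamma_n f_{j+c_{k,n}} + p_{k,n}(T) e_j$.

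Substituting into the expression above yields
$$T^{c_{k,n}} x = \gamma_n \sum_{j=0}^{\nu_n} \alpha_j^{(n)}\, f_{j+c_{k,n}} + p_{k,n}(T) x,$$
and hence, by orthonormality of the vectors $(f_{j+c_{k,n}})_{0 \le j \le \nu_n}$ in $H$,
$$\|T^{c_{k,n}} x - p_{k,n}(T) x\| \;=\; \gamma_n \Bigl(\sum_{j=0}^{\nu_n} |\alpha_j^{(n)}|^2\Bigr)^{\frac{1}{2}}.$$

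It remains to compare the auxiliary norm $\|x\|_0 := (\sum_{j=0}^{\nu_n} |\alpha_j^{(n)}|^2)^{1/2}$ with the ambient norm $\|x\|$ of $H$. Both live on the finite-dimensional subspace $F_{\nu_n} = \textrm{sp}[e_0,\dots,e_{\nu_n}]$, so they are equivalent; there exists a constant $C_{\nu_n}$, depending only on the construction of the (b)-fan of step $n$ (hence on steps $0,\dots,n-1$ and the choice of $b_n$, but not on the (c)-fan), such that $\|x\|_0 \leq C_{\nu_n} \|x\|$ for every $x$ supported in $[0,\nu_n]$. Since $\gamma_n$ is selected \emph{after} $b_n$, it may be taken small enough to guarantee $\gamma_n C_{\nu_n} \leq \delta_n$, which gives the claim.

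The only subtle point — and really the only reason this is not simply a rewording of Fact \ref{fact4} — is the bookkeeping: the constant $C_{\nu_n}$ involves $\nu_n = \xi_n(b_n+1)$ and so cannot be absorbed into a constant depending only on $\xi_n$. One has to be careful that the order of choice at step $n$ is: first $b_n$ (hence the (b)-fan and $\nu_n$), then $C_{\nu_n}$ is determined, then $\gamma_n$, and finally the $c_{k,n}$'s which are to be chosen huge with respect to everything previous. With that ordering in place, choosing $\gamma_n \leq \delta_n / C_{\nu_n}$ finishes the proof.
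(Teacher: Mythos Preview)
Your proof is correct and follows exactly the approach the paper intends: the paper does not reprove Fact~\ref{fact13} at all, merely announcing it as ``the analogue of Fact~\ref{fact4}'' with $\nu_n$ in place of $\xi_n$, and your argument is precisely that analogue. Your explicit remark on the order of choices---$b_n$ (hence $\nu_n$ and $C_{\nu_n}$) before $\gamma_n$, before the $c_{k,n}$'s---is a useful clarification that the paper leaves implicit.
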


Notice that $e_0$ remains hypercyclic with this introduction of the (b)-fan.

\subsection{Boundedness of $T$, estimates on $T^{c_{k,n}}$}
We first have to check that $T$ is still bounded with these
modifications. This will follow from Proposition \ref{prop11} below,
which is the analogue of our previous Proposition \ref{prop6}:

\begin{proposition}\label{prop11}
Let $(\delta _{n})_{n\geq 0}$ be a decreasing sequence of positive
numbers going to zero very fast. The  vectors $f_{j} $ can be constructed
so that for every $n\geq 0$, assertion (1) below holds true:

\begin{enumerate}
\item[(1)] if $x$ is supported in the interval $[\nu _{n}+1, \nu
_{n+1}]$, then

\begin{enumerate}
\item[(1a)] $||\pi_{[\nu _{n}+1, \nu
_{n+1}]}(Tx)||\leq (1+\delta _{n}) ||x||$

\item[(1b)] $||\pi_{[0, \nu
_{n}]}(Tx)||\leq \delta _{n} ||x||$.
\end{enumerate}
\end{enumerate}
\end{proposition}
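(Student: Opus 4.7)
My plan is to follow the scheme of the proof of Proposition~\ref{prop6}, enriching it with new index classes coming from the presence of the (b)-fan. I will write any $x$ supported in $[\nu_n+1,\nu_{n+1}]$ as $x = \sum_j x_j f_j$, expand $Tx = \sum_j x_j Tf_j$, and partition the range of $j$ according to the interval in which $j$ and $j+1$ lie. The interior cases, where both $j$ and $j+1$ belong to the same (b)-working, (b)-lay-off, (c)-working or (c)-lay-off interval, will yield a main term close to $\|x\|^2$; the transition cases, where $j+1$ starts a new interval, will be shown to give negligible contributions provided the constants are chosen in the right order.

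For the interior cases only two computations are new compared with Proposition~\ref{prop6}. When $j$ and $j+1$ both lie in a (b)-working interval, the formulas $f_j = e_j - b_n e_{j-b_n}$ and $f_{j+1} = e_{j+1} - b_n e_{j+1-b_n}$ give $Tf_j = f_{j+1}$ exactly. When $j$ and $j+1$ both lie in a (b)-lay-off interval, $Tf_j = (\lambda_j/\lambda_{j+1}) f_{j+1}$ with ratio $2^{-1/\sqrt{b_n}}$, which is at most $1+\delta_n/4$ once $b_n$ is large. Combined with the analogous (c)-fan estimates from Proposition~\ref{prop6}, the aggregate interior contribution is bounded by $(1+\delta_n/2)^2\|x\|^2$.

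The transitions at the (b)-fan boundaries must be treated explicitly. At the right endpoint of a (b)-working interval, $Tf_j = e_{j+1} - b_n e_{j+1-b_n}$, and both vectors $e_{j+1}$ and $e_{j+1-b_n}$ land at the beginning of a (b)-lay-off, where $\|e_\cdot\| \ls 2^{-\sqrt{b_n}/2}$, yielding $\|Tf_j\| \ls b_n\, 2^{-\sqrt{b_n}/2}$. At the right endpoint of a (b)-lay-off interval, $Tf_j = \lambda_j e_{j+1}$; substituting $e_{j+1} = f_{j+1} + b_n e_{j+1-b_n}$ (where $j+1-b_n \in [0,\xi_n]$ lies in the piece of $H$ already built through step $n-1$) gives $\|Tf_j\| \ls \lambda_j(1 + b_n C_{\xi_n}) \ls b_n\, 2^{-\sqrt{b_n}/2}$, again negligible. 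The transition from the last (b)-working interval (at index $\nu_n$) into the first (c)-lay-off is of the same nature as the first case above and is handled identically. Finally, the transitions internal to the (c)-fan are exactly those handled in the proof of Proposition~\ref{prop6}, the sole change being that (c)-working intervals now have length $\nu_n$ rather than $\xi_n$; this only raises the degree of the polynomials $p_{k,n}$, but leaves the structure of the bounds unchanged, as they are still driven by the smallness of $\gamma_n^{-1}\, 2^{-\sqrt{c_{1,n}}/2}$.

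The main obstacle will be the order of quantifiers in the choice of parameters: $b_n$ must first be chosen so large relative to $\xi_n$ that $b_n\, 2^{-\sqrt{b_n}/2}$ is much smaller than $\delta_n/\nu_n$, so that the new (b)-transition terms are absorbed into the global $\delta_n$ bound; only then can $c_{1,n}$ be chosen so large relative to $\nu_n = \xi_n(b_n+1)$ that the (c)-transition terms are absorbed as well. With these choices made, the argument concludes by the same Cauchy--Schwarz split between shift-like and transition indices as at the end of the proof of Proposition~\ref{prop6}, and yields simultaneously (1a) and (1b).
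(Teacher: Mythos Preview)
Your approach is essentially the paper's: partition the indices by type, show that the interior cases (both $j$ and $j+1$ in the same working or lay-off interval) contribute a shift-like term bounded by $(1+\delta_n/2)\|x\|$, and show that the finitely many transition indices each give $\|Tf_j\|$ negligibly small.

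There is one genuine slip in your treatment of the endpoint $j = r(b_n+1)-1$ of a (b)-lay-off. You write $e_{j+1} = f_{j+1} + b_n e_{j+1-b_n}$ and assert that $j+1-b_n \in [0,\xi_n]$. This is only true for $r=1$. For $r\geq 2$ the index $j+1-b_n = (r-1)(b_n+1)+1$ lies in the $(r-1)$-th (b)-working interval, not in $[0,\xi_n]$, so one must iterate the substitution $r$ times to reach the base. The paper does this and obtains
\[
e_{r(b_n+1)} \;=\; \sum_{l=0}^{r-1} b_n^{\,l}\, f_{(r-l)(b_n+1)+l} \;+\; b_n^{\,r}\, e_r,
\]
whence $\|e_{r(b_n+1)}\| \ls b_n^{\xi_n} C_{\xi_n}$ rather than the $b_n C_{\xi_n}$ your single step would give. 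Your conclusion survives, since $\lambda_j\, b_n^{\xi_n} C_{\xi_n} \ls 2^{-\sqrt{b_n}/2}\, b_n^{\xi_n} C_{\xi_n}$ is still negligible once $b_n$ is large relative to $\xi_n$; but the bound as you wrote it is not justified. With this correction in place, your argument coincides with the paper's.
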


\begin{proof}
We just outline the points which are different from the proof of
Proposition \ref{prop6}.
\par\smallskip
$\bullet$ If $j=rb_{n}+\xi _{n}$, $Tf_{j}=e_{rb_{n}+\xi _{n}+1}-
b_{n}e_{(r-1)b_{n}+\xi _{n}+1}$. Since $rb_{n}+\xi _{n}+1$
 and $(r-1)b_{n}+\xi _{n}+1$ are the endpoints of \loi s of
 length at least roughly $b_{n}$,
 $||e_{rb_{n}+\xi _{n}+1}||\ls 2^{-\frac {1}{2}\sqrt{b_{n}}}$ and
$||e_{(r-1)b_{n}+\xi _{n}+1}||\ls 2^{-\frac {1}{2}\sqrt{b_{n}}}$, so
$||Tf_{j}||$ can be made arbitrarily small.
\par\smallskip
$\bullet$ If $j=r(b_{n}+1)-1$ is the endpoint of a \loi\ of type (b),
$Tf_{j}=\lambda _{r(b_{n}+1)-1}e_{r(b_{n}+1)}$. Now we have a formula
for $e_{r(b_{n}+1)}$ similar to the one of Lemma \ref{lem7}, but much
simpler since we go down a one-dimensional lattice, not a
multi-dimensional one:
$$e_{r(b_{n}+1)}=\sum_{l=0}^{r-1}b_{n}^{l}f_{(r-l)b_{n}+r}+b_{n}^{r
e_{r}}.$$ Hence $||e_{r(b_{n}+1)}||\ls b_{n}^{\xi _{n}}C_{\xi _{n}}$
where $C_{\xi _{n}}$ depends only on $\xi _{n}$. Since $\lambda
_{r(b_{n}+1)-1}\ls 2^{-\frac {1}{2}\sqrt{b_{n}}}$, $||Tf_{j}||$ can be
made very small too.
\par\smallskip
$\bullet$ The proof of the estimates for
$||Tf_{r_{1}c_{1,n}+\dots+r_{k_{n}}c_{k_{n},n}-1}||$ and
$||Tf_{r_{1}c_{1,n}+\dots+r_{k_{n}}c_{k_{n},n}+\nu _{n}}||$ are exactly
the same as in Proposition \ref{prop6}, except that $||e_{\nu _{n}+1}||$
is now involved instead of $||e_{\xi
_{n}+1}||$:
$||e_{\nu _{n}+1}||\ls 2^{-\frac {1}{2}\sqrt{c_{1,n}}}$ and everything
works as previously.
\end{proof}

\begin{corollary}
For any $\varepsilon >0$ one can make the construction so that $T$ is
bounded on $H$ with $||T||\leq 1+\varepsilon $.
\end{corollary}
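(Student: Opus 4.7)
The plan is to mimic essentially verbatim the proof of Proposition \ref{prop1} (\emph{i.e.}~Proposition \ref{prop8}), replacing $\xi_n$ by $\nu_n$ throughout and invoking Proposition \ref{prop11} in place of Proposition \ref{prop6}. I would argue by induction on $n$, showing that there is a constant $C_n$, depending only on the construction up through step $n-1$, with $\|Tx\|\leq C_n\|x\|$ for every $x$ supported in $[0,\nu_n]$, and that for an appropriate (fast-decreasing) choice of the sequence $(\delta_n)$ one can take $C_n\leq 1+\varepsilon$ uniformly in $n$.

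For the inductive step, write any $x$ supported in $[0,\nu_{n+1}]$ as $x=\pi_{[0,\nu_n]}(x)+\pi_{[\nu_n+1,\nu_{n+1}]}(x)$ and decompose $Tx$ into its components on the three orthogonal blocks $[0,\nu_n]$, $[\nu_n+1,\nu_{n+1}]$ and $[\nu_{n+1}+1,\nu_{n+2}]$ exactly as in the proof of Proposition \ref{prop1}. The component of $T(\pi_{[0,\nu_n]}(x))$ supported in $[0,\nu_n]$ is controlled by the inductive hypothesis; the components of $T(\pi_{[\nu_n+1,\nu_{n+1}]}(x))$ lying in $[\nu_n+1,\nu_{n+1}]$ and in $[0,\nu_n]$ are controlled by (1a) and (1b) of Proposition \ref{prop11}. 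The two remaining cross terms are $\pi_{[\nu_n+1,\nu_{n+1}]}(T(\pi_{[0,\nu_n]}(x)))$ and $\pi_{[\nu_{n+1}+1,\nu_{n+2}]}(T(\pi_{[\nu_n+1,\nu_{n+1}]}(x)))$, each concentrated on a single basis vector $f_{\nu_n+1}$ or $f_{\nu_{n+1}+1}$ with weight of the form $\lambda_{\nu_n}/\lambda_{\nu_n+1}$ or $\lambda_{\nu_{n+1}}/\lambda_{\nu_{n+1}+1}$ coming from the junction between the last \loi\ of one block and the first \loi\ of the next; these coefficients can be made arbitrarily small (say, $\leq\delta_{n-1}$ and $\leq\delta_n$ respectively) by choosing $\lambda_{\nu_n+1}$ and $\lambda_{\nu_{n+1}+1}$ sufficiently large when defining the \loi s bridging successive steps.

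Putting the pieces together and using the Pythagorean identity in the three orthogonal blocks, one obtains a recursion of the shape
\[
\|Tx\|^2\leq \bigl(\max(C_n^2+\delta_{n-1}^2,\,1+(1+\delta_n)^2+\delta_n^2)+(2C_n\delta_n+2\delta_{n-1}(1+\delta_n))\delta_n\bigr)\|x\|^2,
\]
identical to the one derived in the proof of Proposition \ref{prop1}. Provided $(\delta_n)$ goes to zero fast enough (for instance summably and small from the outset, with $C_0=1$), this recursion forces $C_n\leq 1+\varepsilon$ for every $n$; passing to the limit over vectors of finite support yields $\|T\|\leq 1+\varepsilon$ on all of $H$.

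The only place where any genuine new verification is required, compared to Section~$3$, is to make sure that the additional ``boundary'' indices now arising from the (b)-fan do not spoil the argument; but this has already been handled in the proof of Proposition \ref{prop11}, whose first two bullets explicitly show that $\|Tf_j\|$ can be made arbitrarily small when $j$ is an endpoint of a (b)-working interval or of a (b)-\loi. Hence no additional obstacle appears, and the inductive argument of Proposition \ref{prop1} carries over with essentially no change.
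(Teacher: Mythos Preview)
Your proposal is correct and is precisely the argument the paper has in mind: the corollary is stated without proof in the paper, the implicit justification being that the proof of Proposition~\ref{prop8} carries over verbatim with $\nu_n$ in place of $\xi_n$ and Proposition~\ref{prop11} in place of Proposition~\ref{prop6}. One small inaccuracy worth noting: the cross-term weight is not literally $\lambda_{\nu_n}/\lambda_{\nu_n+1}$, since $\nu_n$ is the endpoint of a (b)-working interval (so $f_{\nu_n}=e_{\nu_n}-b_n e_{\nu_n-b_n}$, not $\lambda_{\nu_n}e_{\nu_n}$); but as you yourself point out, the smallness of $\|Tf_{\nu_n}\|$ is exactly what the first bullet in the proof of Proposition~\ref{prop11} establishes, so the argument goes through unchanged.
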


Then Proposition \ref{prop10} becomes

\begin{proposition}\label{prop14}
Let $(\delta _{n})_{n\geq 0}$ be a decreasing sequence of positive
numbers going to zero very fast. The  vectors $f_{j} $ can be constructed
so that for every $n\geq 0$, assertion (2) below holds true:

\begin{enumerate}
\item[(2)] for any
vector $x$  supported in the interval $[\nu _{n}+1, \nu
_{n+1}]$ and for any $1\leq k \leq k_{n}$,
\begin{enumerate}
\item[(2a)] $||\pi_{[\nu _{n}+1, \nu
_{n+1}]}(T^{c_{k,n}}x)||\leq 4 ||x||$

\item[(2b)] $||\pi_{[0, \nu
_{n}]}(T^{c_{k,n}}x)||\leq \delta _{n} ||x||$
\end{enumerate}

\item[(3)] for any $x$ supported in the interval $[0, \nu
_{n}]$ and any $m<\nu _{n}/2$,
\begin{enumerate}
\item[$\;$]$||\pi_{[\nu _{n}+1, \nu
_{n+1}]}(T^{m}x)||\leq \delta _{n} ||x||$.
\end{enumerate}
\end{enumerate}
\end{proposition}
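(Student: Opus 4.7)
The plan is to mirror the proof of Proposition \ref{prop10} with $\xi_n$ replaced by $\nu_n$ throughout, and to add a new case analysis for vectors supported on the (b)-fan of step $n+1$. The relevant scale hierarchy is $\nu_n \ll c_{1,n} \ll \dots \ll c_{k_n,n} \ll \xi_{n+1}$ with $c_{k_n,n}$ also chosen much smaller than $\sqrt{b_{n+1}}$. Consequently, $T^{c_{k,n}}$ will essentially act as a shift on the (b)-working intervals of step $n+1$ and as the identity on the (b)-lay-offs.

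I would first dispatch (3) exactly as in Proposition \ref{prop10}: for $x$ supported in $[0,\nu_n]$ and $m<\nu_n/2$, the vector $T^m x$ is supported in $[0,(3/2)\nu_n]$, so any component in $[\nu_n+1,\nu_{n+1}]$ sits at the very beginning of the long (c)-lay-off $[\nu_n+1, c_{1,n}-1]$, where $\|e_u\| \ls 2^{-\frac{1}{2}\sqrt{c_{1,n}}}$. Since the equivalence constant between the $(e_j)$- and $(f_j)$-norms on $F_{\nu_n}$ depends only on $\nu_n$, choosing $c_{1,n}$ sufficiently large yields the bound $\delta_n\|x\|$.

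For (2a) and (2b), I would decompose $x = x_{(c)} + x_{(b)} + x_{(\ell)}$, where $x_{(c)}$ is supported on the (c)-working intervals of step $n$, $x_{(b)}$ on the (b)-fan of step $n+1$, and $x_{(\ell)}$ on the various lay-offs in $[\nu_n+1,\nu_{n+1}]$. The contribution of $x_{(c)}$ is handled exactly as in Proposition \ref{prop10}, the only change being that the (c)-working intervals now have length $\nu_n$ and the $p_{k,n}$ have degree up to $\nu_n$; Lemma \ref{lem7} (read with $\nu_n$ in place of $\xi_n$) and Fact \ref{fact13} give at most $4\|x_{(c)}\|$ on the $[\nu_n+1,\nu_{n+1}]$-projection and an arbitrarily small term on the $[0,\nu_n]$-projection. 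The lay-off part $x_{(\ell)}$ reduces to the same weighted-shift dichotomy already employed there: either $T^{c_{k,n}}f_j = f_{j+c_{k,n}}$ (neutral weight, because $\lambda_j = \lambda_{j+c_{k,n}}$ for parallel (c)-lay-offs), or $\lambda_{j+c_{k,n}}$ is enormous and $\|T^{c_{k,n}}f_j\|$ is negligible.

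The genuinely new case is $x_{(b)}$. For $j$ in a (b)-working interval,
\[
T^{c_{k,n}}f_j = e_{j+c_{k,n}} - b_{n+1}\, e_{j+c_{k,n}-b_{n+1}};
\]
since $c_{k,n} \ll b_{n+1}$, both indices sit (up to a small shift) in corresponding (b)-working intervals of step $n+1$, so this equals $f_{j+c_{k,n}}$ plus a negligible error. For $j$ in a (b)-lay-off, the weighted-shift ratio $\lambda_j/\lambda_{j+c_{k,n}} = 2^{c_{k,n}/\sqrt{b_{n+1}}}$ is arbitrarily close to $1$. The main expected obstacle is the boundary behaviour when $T^{c_{k,n}}$ pushes an index across a (b)-working/lay-off junction; as in Proposition \ref{prop11}, this is controlled by the expansion
\[
e_{r(b_{n+1}+1)} = \sum_{l=0}^{r-1} b_{n+1}^l\, f_{(r-l)b_{n+1}+r} + b_{n+1}^{r}\, e_r,
\]
combined with the exponential smallness of the $\lambda_j$-values at the start of the surrounding lay-offs, which absorbs the $b_{n+1}^{\xi_{n+1}}$-type factors. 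Summing the squares over all indices via the orthogonality of the $(f_j)$'s finally delivers (2a) and (2b).
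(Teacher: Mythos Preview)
Your proposal is correct and follows essentially the same approach as the paper. The paper likewise notes that the proof is ``virtually the same'' as Proposition~\ref{prop10} and that the only new work is on the (b)-fan of step $n+1$, where it runs through the same three cases you describe (both indices in a (b)-lay-off with ratio $\lambda_j/\lambda_{j+c_{k,n}}=2^{c_{k,n}/\sqrt{b_{n+1}}}$ close to $1$; a lay-off index pushed into a (b)-working interval, controlled via the expansion of $e_{r(b_{n+1}+1)}$ together with $\lambda_j\lesssim 2^{-\frac12\sqrt{b_{n+1}}}$; and $j$ already in a (b)-working interval, which the paper dismisses as ``exactly the same'' and you treat as $T^{c_{k,n}}f_j=f_{j+c_{k,n}}$ plus a negligible boundary term).
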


The same argument which is used for the proof of Proposition \ref{prop5}
shows that

\begin{proposition}\label{prop15}
For every $n\geq 1$, every $1\leq k \leq k_{n}$ and every $x\in H$ such
that $\pi_{[0,\nu _{n}]}x=0$, $||T^{c_{k,n}}x||\leq 100\,||x||$. In other
words, $$||T^{c_{k,n}}(x-\pi_{[0,\nu _{n}]}x)||\leq \,100
||x-\pi_{[0,\nu_{n}]}x|| $$ for every $x\in H$.
\end{proposition}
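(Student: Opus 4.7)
The plan is to transport the decomposition argument used in the proof of Proposition \ref{prop5} to the new scale determined by the sequence $(\nu_n)_{n\geq 1}$. Since $\pi_{[0,\nu_n]}x=0$ we may write
\begin{align*}
T^{c_{k,n}}x &= \pi_{[0,\nu_n]}(T^{c_{k,n}}x) + \sum_{l\geq n}\pi_{[\nu_l+1,\nu_{l+1}]}(T^{c_{k,n}}x)\\
&= \sum_{k'\geq n}\pi_{[0,\nu_n]}\bigl(T^{c_{k,n}}(\pi_{[\nu_{k'}+1,\nu_{k'+1}]}x)\bigr) + \sum_{l\geq n}\sum_{k'\geq l-1}\pi_{[\nu_l+1,\nu_{l+1}]}\bigl(T^{c_{k,n}}(\pi_{[\nu_{k'}+1,\nu_{k'+1}]}x)\bigr),
\end{align*}
where the upper limit on $k'$ reflects the fact that, provided $(\nu_n)$ grows fast enough, $T^{c_{k,n}}$ applied to a vector supported in $[\nu_{k'}+1,\nu_{k'+1}]$ is supported in $[0,\nu_{k'+2}]$. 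As in the proof of Proposition \ref{prop5}, applying the inequality $\|a_1+\cdots+a_j\|^2\leq\sum_{i=1}^j 2^{j+1-i}\|a_i\|^2$ to each of the two outer sums reduces the problem to estimating each block $\pi_{[\nu_l+1,\nu_{l+1}]}\bigl(T^{c_{k,n}}(\pi_{[\nu_{k'}+1,\nu_{k'+1}]}x)\bigr)$ separately.

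The four cases match exactly those in the proof of Proposition \ref{prop5}, with $\nu$ replacing $\xi$. When $k'=l=n$, parts (2a) and (2b) of Proposition \ref{prop14} give the bound $4\|\pi_{[\nu_n+1,\nu_{n+1}]}x\|$ on the first block and $\delta_n\|\pi_{[\nu_n+1,\nu_{n+1}]}x\|$ on its tail in $[0,\nu_n]$. When $k'>n$ and $l\leq k'-1$, one has $c_{k,n}<\nu_{k'}/2$ (for $n+1\leq k'$, since $c_{k,n}$ depends only on step $n$ while $\nu_{k'}$ grows extremely fast), so the required $\delta_{k'}\|\pi_{[\nu_{k'}+1,\nu_{k'+1}]}x\|$ bound comes from the $\nu$-analogue of (1b') of Proposition \ref{prop9}. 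When $k'=l>n$ one uses the analogue of (1a'), and when $l=k'+1$ one invokes part (3) of Proposition \ref{prop14} for the forward leak into the next block. Summing these contributions and choosing the sequence $(\delta_n)$ to decrease fast enough (so that the geometric factors $2^{k+1-n}$, $2^{k+2-l}$ are crushed by $\delta_k^2$) produces $\|T^{c_{k,n}}x\|^2\leq 10^4\|x\|^2$, as in the estimate at the end of the proof of Proposition \ref{prop5}.

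The only genuine obstacle is that the proof of Proposition \ref{prop5} relied on Proposition \ref{prop9}, in which the iterated shift estimates $(1a')$, $(1b')$, $(1c')$ were derived at the scale $\xi_n$ from the single-step estimates of Proposition \ref{prop6}. Here we would need the corresponding iterated statements at scale $\nu_n$, deduced from Proposition \ref{prop11}. The inductive argument of Proposition \ref{prop9} carries over verbatim: the base case is Proposition \ref{prop11}, and the inductive step writes $T^m x=T(T^{m-1}x)$ and decomposes the image into projections on $[0,\nu_n]$, $[\nu_n+1,\nu_{n+1}]$ and $[\nu_{n+1}+1,\nu_{n+1}+m-1]$, using that the last piece lies in the first lay-off interval of $[\nu_{n+1}+1,\nu_{n+2}]$ (of length of order $b_{n+1}$, by construction), on which $T$ acts as an almost-isometric weighted shift and produces no leakage into $[0,\nu_{n+1}]$. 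This yields constants $C_{m,n}$ depending only on steps $0$ to $n-1$, and taking $\delta_n=\varepsilon_n/\max_{m<\nu_n/2}C_{m,n}$ gives the required $\varepsilon_n$-version of $(1a')$--$(1c')$ at the $\nu$-scale. Once these are in hand, the summation described above closes the proof and yields $\|T^{c_{k,n}}x\|\leq 100\,\|x\|$.
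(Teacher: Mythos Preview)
Your proposal is correct and follows exactly the approach the paper intends: the paper's own proof of Proposition~\ref{prop15} consists of the single sentence ``the same argument which is used for the proof of Proposition~\ref{prop5}'', and you have faithfully transcribed that argument with $\nu$ in place of $\xi$, invoking Proposition~\ref{prop14} in place of Proposition~\ref{prop10}. Your explicit observation that one also needs the $\nu$-analogue of Proposition~\ref{prop9} (and that its inductive proof carries over verbatim from Proposition~\ref{prop11}) is a detail the paper leaves implicit but which is indeed required.
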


\begin{proof}[Proof of Proposition \ref{prop14}]
The proof is virtually the same, except that one has additionally to
investigate the quantities $T^{c_{k,n}}f_{j}$ for $j\in [\xi _{n}+1,\nu
_{n+1}]$. This involves no difficulty:
\par\smallskip
$\bullet$ If $j$ and $j+c_{k,n}$ belong to the same \loi\ ($[\xi
_{n+1}+1,b_{n+1}]$ for instance), $T^{c_{k,n}}f_{j}=\lambda _{j}/
\lambda _{j+c_{k,n}}f_{j+c_{k,n}}$, and $
\lambda _{j}/
\lambda _{j+c_{k,n}}\ls 2^{\frac {1}{2}\sqrt{(c_{k,n}/b_{n+1})}}$ which can
be made arbitrarily close to $1$.
\par\smallskip
$\bullet$ If $j$ belongs to a \loi\ ending at the point $r(b_{n+1}+1)-1$
and $j+c_{k,n}$ belongs to the working interval
$[r(b_{n+1}+1),rb_{n+1}+\xi _{n+1}]$, then $T^{c_{k,n}}f_{j}
=T^{\alpha }e_{r(b_{n+1}+1)}$ with $0\leq \alpha \leq c_{k,n}$,
so
$$||T^{c_{k,n}}f_{j}||\leq \lambda _{j}\, ||T||^{c_{k,n}}
||e_{r(b_{n+1}+1)}||\ls \lambda _{j} \, 2^{c_{k,n}}
 \, b_{n+1}^{\xi _{n+1}}
C_{\xi_{n+1}}.$$ Now $r(b_{n+1}+1)-c_{k,n}\leq j\leq r(b_{n+1}+1)-1$
 and since
 $b_{n+1}$ is very large \wrt\ $c_{k,n}$, $\lambda _{j}\ls
 2^{-\frac {1}{2}\sqrt{b_{n+1}}}$, and thus $||T^{c_{k,n}}f_{j}||$ can be
 made very small.
 \par\smallskip
$\bullet$ The argument is exactly the same when $j$ belongs to a working
interval of the (b)-fan, and we omit it.
\end{proof}

\subsection{Estimates on $T^{b_{n}+1}$, construction of some hypercyclic vectors}
We begin this section by a result showing that if the $e_{0}$-coordinate
of $\pi_{[0,\xi _{n}]}x$ is not too small for infinitely many $n$'s,
then $x$ must be \hy. Though not strictly necessary for the proof of
Theorem \ref{th2}, this result shows the main idea of the proof, and
will allow us to prove easily that $HC(T)^{c}$ is Haar null, so we
include it.

\begin{proposition}\label{prop16}
Let $x\in H$, $||x||\leq 1$, be a vector satisfying the following
assumption:
\begin{enumerate}
\item [(*)] for infinitely many $n$'s, $|e_{0}^{*(n)}(x)|\geq
2^{-n}$, where $\pi_{[0,\xi _{n}]}x=\sum_{j=0}^{\xi _{n}} e _{j}
^{*(n)}(x)e_{j}$.
\end{enumerate}
Then $x$ is \hy\ for $T$.
\end{proposition}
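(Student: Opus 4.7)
The plan is to exhibit, for each $n$ satisfying $(*)$, an integer $c_{k_n,n}$ such that $T^{c_{k_n,n}} x$ approximates $Te_0$, and then invoke the hypercyclicity of $e_0$ (valid for the modified construction, by the argument of Theorem \ref{th1}). Indeed, if $Te_0\in\overline{\mathcal{O}rb(x,T)}$, then by $T$-invariance of the latter every $T^m e_0$ with $m\geq 1$ lies in it; since the orbit $\{T^m e_0:m\geq 0\}$ is dense in $H$ and $e_0$ is an accumulation point of this orbit (hypercyclic vectors accumulate on themselves in infinite dimension), the tail $\{T^m e_0:m\geq 1\}$ is already dense, whence $x$ is hypercyclic.

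Fix $n$ large satisfying $(*)$ and write $\pi_{[0,\xi_n]} x = \sum_{j=0}^{\xi_n} \alpha_j^{(n)} e_j$ with $|\alpha_0^{(n)}| \geq 2^{-n}$. The Fact from Section $4$ furnishes a polynomial $p^*$ of degree less than $\xi_n$, with $|p^*| \leq C_n$ (a constant depending only on $\xi_n$ and $n$), such that $\|p^*(T_{\xi_n}) \pi_{[0,\xi_n]} x - e_0\| \leq 1/\xi_n$. The preamble estimate $\|(p^*(T)-p^*(T_{\xi_n}))\pi_{[0,\xi_n]}x\|\leq |p^*|\,C_{\xi_n}\,2^{-\frac12\sqrt{c_{1,n}}}$ converts this into $\|Tp^*(T)\pi_{[0,\xi_n]}x - Te_0\|$ small, provided $c_{1,n}$ has been chosen enormous.

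Set $\hat q(\zeta) = (\zeta^{b_n+1}/b_n)p^*(\zeta)$, so $\hat q(T) = (T^{b_n}/b_n)\,Tp^*(T)$. Then $|\hat q|\leq C_n/b_n\leq 1$ once $b_n$ is large enough, and $\deg\hat q\leq \xi_n+b_n+1\leq\nu_n$. By Fact \ref{fact12} (applied after factoring out a $T$ as in the preamble),
\[
\|\hat q(T)\pi_{[0,\xi_n]}x - Tp^*(T)\pi_{[0,\xi_n]}x\| = \|(T^{b_n}/b_n - I)\,Tp^*(T)\pi_{[0,\xi_n]}x\| \leq C_{\xi_n}/b_n.
\]
We then pick $p_{k,n}$ from the $4^{-\nu_n}$-net of the ball of radius $2$ in $\K_{\nu_n}[\zeta]$ with $|p_{k,n}-\hat q|\leq 4^{-\nu_n}$. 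Fact \ref{fact13} yields $\|T^{c_{k,n}}\pi_{[0,\nu_n]}x - p_{k,n}(T)\pi_{[0,\nu_n]}x\|\leq \delta_n\|x\|$, and Proposition \ref{prop15} yields $\|T^{c_{k,n}}(x-\pi_{[0,\nu_n]}x)\|\leq 100\|x-\pi_{[0,\nu_n]}x\|$. Assembling,
\[
\|T^{c_{k,n}}x - Te_0\| \leq 100\|x-\pi_{[0,\nu_n]}x\| + \delta_n + 2^{-\nu_n}\|x\| + C_{\xi_n}/b_n + \|p_{k,n}(T)\pi_{[\xi_n+1,\nu_n]}x\|.
\]

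The main obstacle is the last term. The $H$-norm of $\pi_{[\xi_n+1,\nu_n]}x$ need not tend to $0$, and a crude operator-norm bound on $p_{k,n}(T)$ is useless since $\deg p_{k,n}$ is of order $\nu_n$. The special form $\hat q(T) = (T^{b_n+1}/b_n)p^*(T)$, combined with the relations $f_j = e_j - b_n e_{j-b_n}$ built into the shades $[r(b_n+1),rb_n+\xi_n]$, $r = 2,\ldots,\xi_n$, of the (b)-fan, produces a descent cascade along these shades entirely analogous to the descent of Lemma \ref{lem7} along the coordinates of the (c)-fan. This cascade, whose very purpose is to make such a bound possible (the role of the shades being otherwise obscure), yields $\|p_{k,n}(T)\pi_{[\xi_n+1,\nu_n]}x\|\leq C_{\xi_n}/b_n$, and hence $\|T^{c_{k_n,n}}x - Te_0\|\to 0$ along the subsequence given by $(*)$.
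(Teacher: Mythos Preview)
Your argument is essentially the paper's own proof. The paper approximates $e_1$ directly by choosing $p$ divisible by $\zeta$ and writing $p(\zeta)=\zeta p_0(\zeta)$, whereas you approximate $e_0$ by $p^*$ and then multiply by $T$; since $\zeta p^*(\zeta)$ plays exactly the role of the paper's $p$, the two routes coincide. The structure --- pass to $\hat q(\zeta)=\zeta^{b_n+1}p^*(\zeta)/b_n$, approximate by $p_{k,n}$, invoke Fact~\ref{fact13} and Proposition~\ref{prop15}, and then isolate the tail $\|p_{k,n}(T)\pi_{[\xi_n+1,\nu_n]}x\|$ --- matches the paper line for line.

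The one place where you are imprecise is the tail estimate. What you need, after replacing $p_{k,n}$ by $\hat q$ at cost $2^{-\nu_n}$, is
\[
\|\hat q(T)\pi_{[\xi_n+1,\nu_n]}x\|=\frac{1}{b_n}\|p^*(T)\,T^{b_n+1}\pi_{[\xi_n+1,\nu_n]}x\|\leq \frac{|p^*|\,2^{\xi_n}}{b_n}\,\|T^{b_n+1}\pi_{[\xi_n+1,\nu_n]}x\|,
\]
so the crux is a uniform bound $\|T^{b_n+1}\pi_{[\xi_n+1,\nu_n]}x\|\leq 2\|x\|$. In the paper this is Lemma~\ref{lem17}, and its proof is not a descent cascade \`a la Lemma~\ref{lem7}: it is a direct case check that for $j\in[\xi_n+1,\nu_n]$ one has $T^{b_n+1}f_j=f_{j+b_n+1}$ up to a negligible error, because the (b)-shades are spaced so that $T^{b_n+1}$ carries the $r$-th (b)-working interval onto the $(r+1)$-th and each (b)-lay-off interval onto the next with ratio $2^{1/\sqrt{b_n}}$. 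So your intuition that the shades are responsible is right, but the mechanism is translation-by-$b_n+1$, not a downward recursion; it would be worth stating this lemma explicitly.
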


\begin{proof}
By Fact \ref{fact4}, there exists
for every $n\geq 1$ a constant $C_{\xi _{n}} $ such that for
every $y\in F_{\xi _{n}}$ of the form
 $y=\sum_{j=0}^{\xi _{n}}
e_{j}^{*(n)}(y)e_{j}$ with $|e_{0}^{*(n)}(y)|\geq 2^{-n}$, there exists a
polynomial $p$ of degree less than $\xi _{n}$ with $|p|\leq C_{\xi _{n}}$
and such that $\zeta $ divides $p(\zeta )$ which has the property that
 $$||p(T_{\xi _{n}})y-e_{1}||\leq \frac {1}{\xi _{n}}
\cdot$$
Write $p(\zeta)=\zeta p_0(\zeta)$.
When $x$ satisfies (*), we choose an $n$ such that
$|e_{0}^{*(n)}(x)|\geq 2^{-n}$ and apply this to the vector
$y=\pi_{[0,\xi _{n}]}x$. If
 $p$ is the polynomial satisfying the above-mentioned properties,
  then we
have seen that
$$||p_0(T)T(\pi_{[0,\xi _{n}]}x)-e_{1}||\leq \frac {2}{\xi _{n}}
$$ since $||(p(T_{\xi _{n}})-p(T))\pi_{[0,\xi _{n}]}x||\ls
C_{\xi _{n}}\sup_{\xi _{n}+1\leq j\leq 2\xi _{n}}||e_{j}||
\ls C_{\xi _{n}}
2^{-\frac {1}{2}\sqrt{b_{n}}}$. We now have to make the modulus of the
polynomial small, so we take $q(\zeta )=\frac {\zeta ^{b_{n}}}{b_{n}} p
(\zeta )=\frac {\zeta ^{b_{n}+1}}{b_{n}} p_0
(\zeta )$: the degree of $q$ is less than $\xi _{n}+b_{n}$, $|q|<1$, and
by Fact \ref{fact12}
$$||q(T)\pi_{[0,\xi_{n} ]}x-e_{1}||\leq \frac {3}{\xi _{n}}
\cdot$$ Let now $k\leq k_{n}$ be such that $|q-p_{k,n}|\leq 4^{-\nu
_{n}}$: then $||q(T)-p_{k,n}(T)||\leq 4^{-\nu _{n}}||T||^{d}$
where $d$ is the degree of $q-p_{k,n}$, so $||q(T)-p_{k,n}(T)||\leq
2^{-\nu _{n}}$ for instance. Hence
$$||p_{k,n}(T)\pi_{[0,\xi_{n} ]}x-e_{1}||\leq \frac {4}{\xi _{n}}
\cdot$$ This yields that
\begin{eqnarray*}
||T^{c_{k,n}}x-e_{1}||&\leq&||(T^{c_{k,n}}-p_{k,n}(T))\pi_{[0,\nu
_{n}]}x||\\
&+& ||p_{k,n}(T)\pi_{[\xi _{n}+1,\nu_{n} ]}x||+||p_{k,n}(T)
\pi_{[0,\xi _{n}]}
x-e_{1}||\\ &\leq& \frac {5}{\xi _{n}}+||p_{k,n}(T)\pi_{[\xi _{n}+1,\nu
_{n}]}x||\\
&\leq& \frac {6}{\xi _{n}}+||q(T)\pi_{[\xi _{n}+1,\nu
_{n}]}x||
\end{eqnarray*}
and the difficulty which remains is to estimate the last term. This is
here that we use the fact (which may look a bit strange) that we have
approximated $e_{1}$ and not $e_{0}$, as well as the shades of the
(b)-fan: since $\zeta $ divides $p(\zeta )$ (because we approximate
$e_{1}$), $q$ can be written as $q(\zeta )=\frac {1}{b_{n}}\zeta
^{b_{n}+1} p_{0}(\zeta )$ with $|p_{0}|\leq C_{\xi _{n}} $ and the degree
of $p_{0}$ less than $\xi _{n}-1$. Hence
\begin{eqnarray*}
||q(T)\pi_{[\xi _{n}+1,\nu
_{n}]}x||=||\frac {1}{b_{n}}T ^{b_{n}+1}
p_{0}(T)\pi_{[\xi _{n}+1,\nu
_{n}]}x||\leq \frac {1}{b_{n}}C_{\xi _{n}} 2^{\xi _{n}}||T^{b_{n}+1}
\pi_{[\xi _{n}+1,\nu
_{n}]}x||.
\end{eqnarray*}
And now the shades of the (b)-fan have been introduced exactly so as to
ensure that

\begin{lemma}\label{lem17}
For every $n\geq 1$ and every $x\in H$,
$$||T^{b_{n}+1}
\pi_{[\xi _{n}+1,\nu
_{n}]}x||\leq 2||x||.$$
\end{lemma}

Lemma \ref{lem17} allows us to conclude immediately the proof of
Proposition \ref{prop16}:
$$||T^{c_{k,n}}x-e_{1}||\leq \frac {7}{\xi _{n}},$$ and hence $e_{1}$
belongs to the closure of the orbit of $x$. Since $e_{0}$ is \hy\ for
$T$, $e_{1}$ is too, and hence $x$ is hypercyclic.
\end{proof}

\begin{remark}
The condition $|e_{0}^{*(n)}(x)|\geq 2^{-n}$ can obviously be replaced
by any condition of the form $|e_{0}^{*(n)}(x)|\geq \varepsilon _{\xi
_{n}}$, where $\varepsilon _{\xi _{n}}$ is a small number depending
only on $\xi _{n}$.
\end{remark}

It remains to prove Lemma \ref{lem17}.

\begin{proof}[Proof of Lemma \ref{lem17}]
As in the preceding proofs, we must distinguish several cases.
\par\smallskip
$\bullet$ If $j\in [r(b_{n}+1),rb_{n}+\xi _{n}]$, $r=1,\dots, \xi
_{n}-1$, $T^{b_{n}+1}f_{j}=e_{j+b_{n}+1}-b_{n}e_{j+1}$, and $j+b_{n}+1
\in [(r+1)(b_{n}+1),(r+1)b_{n}+\xi _{n}]$, so if $j<rb_{n}+\xi
_{n}$, $T^{b_{n}+1}f_{j}=f_{j+b_{n}+1}$. If
 $j=rb_{n}+\xi
_{n}$,
$T^{b_{n}+1}f_{j}=e_{(r+1)b_{n}+\xi _{n}+1}- b_{n}e_{rb_{n}+\xi _{n}+1}$.
Since $||e_{rb_{n}+\xi _{n}+1}||\ls 2^{-\frac {1}{2}\sqrt{b_{n}}}$ for
$r=1,\dots,\xi _{n}$, $||T^{b_{n}+1}f_{j}||$ is very small.
\par\smallskip
$\bullet$ If $j=\xi _{n}(b_{n}+1)$, $T^{b_{n}+1}f_{j}=e_{ (\xi
_{n}+1)(b_{n}+1)}-b_{n}e_{\xi _{n}(b_{n}+1)+1}$ so
 $||T^{b_{n}+1}f_{j}||$
is very small.
\par\smallskip
$\bullet$ If $j\in [rb_{n}+\xi _{n}+1, (r+1)(b_{n}+1)-1]$, $r=1,\dots,
\xi _{n}-2$, $T^{b_{n} +1}f_{j}=\lambda _{j}e_{j+b_{n}+1}$ and $j+b_{n}+1
\in [(r+1)b_{n}+\xi _{n} +2, (r+2)(b_{n}+1)-1] $ which is contained in
the \loi\ $[(r+1)b_{n}+\xi _{n} +1, (r+2)(b_{n}+1)-1]$. So
$T^{b_{n}+1}f_{j}=\lambda _{j}/\lambda _{j+b_{n} +1}e_{j+b_{n}+1}$. Now a
straightforward computation shows that ${\lambda _{j}}/{\lambda
_{j+b_{n}+1}}=2^{
 {1}/{\sqrt{b_{n}}}}$ which is less than $2$ if $b_{n}$ is
sufficiently large. It is at this point that we use the fact that the
definition of the coefficients $\lambda _{j}$ for $j$ in a (b)-\loi\
involves directly $b_{n}$, and not the length of the interval. If $r=\xi
_{n}-1$, then $j+b_{n}+1$ belongs to the beginning of the \loi\ $[\nu
_{n}+1, c_{1,n}-1]$, so $\lambda _{j+b_{n}+1}\gs 2^{\sqrt{ c_{1,n}}}$ so
$\lambda _{j}/\lambda _{j+b_{n}+1}\ls 2^{\frac {1}{2}\sqrt{b_{n}}}
2^{-\sqrt{c_{1,n}}}$ is very small.
\par\smallskip
$\bullet$ If $j\in [\xi _{n}+1,b_{n}]$, $j+b_{n}+1\in [b_{n}+\xi _{n}+1,
2b_{n}]$, so again $T^{b_{n}+1}f_{j}=2^{1/\sqrt{b_{n}}}f_{j+b_{n}+1}$.
\end{proof}

\subsection{The set $HC(T)^{c}$ is Haar null}
If $M$ is any positive integer, let $E_{M}$ be the set of vectors
$
x\in H$ such that $||x||\leq M$ and there exists an $n_{0}$ such that for
every $n\geq n_{0}$, $ |e_{0}^{*(n)}(x) |\leq 2^{-n}M$. Then
$$HC(T)^{c}\subseteq \bigcup_{M=1}^{+\infty }E_{M}.$$ Indeed if $x$ is a nonzero
vector not in
 $HC(T)$, then $x/||x||$ does not satisfy assumption (*) of
 Proposition \ref{prop16}, so there exists an $n_{0}$ such that
 for every $n\geq n_{0}$,
 $|e_{0}^{*(n)}(x/||x||)|\leq 2^{-n}$, i.e. $|e_{0}^{*(n)}(x)|\leq 2^{-n}
 ||x||$. Hence if $M\geq ||x||$, $x$ belongs to $E_{M}$. Since
 the union of countably many Haar null sets is Haar null, it
 suffices to show that each $E_{M}$ is Haar null. There are
 different ways of proving this. A first option is to use a
 result of Matouskova \cite{Ma} that every closed convex subset
 of a separable superreflexive space is Haar null. Or an
 elementary approach is to exhibit a measure $m$ such that $m(x_{0}+
 E_{M})=0$ for every $x_{0}\in H$. We detail here the second
 argument.
The \mea s which we consider are  \nd\ \ga\ \mea s on $H$:
 let $(\Omega ,\mathcal{F},\P)$ be a standard probability space,
 and $(g_{n})_{n\geq 0}$ a sequence of standard independent
 random \ga\ variables, real or complex depending on whether the
 Hilbert space $H$ is supposed to be real or complex.
For any sequence $c=(c_{j})_{j\geq 0}$ of non-zero real numbers such that
$\sum_{j\geq 0}|c_{j}|^{2}<+\infty $, consider the random measurable
function $\Phi_{c}: (\Omega ,\mathcal{F},\P)\longrightarrow H $ defined
by
$$\Phi_{c}(\omega )=\sum_{j=0}^{+\infty }c_{j}g_{j}(\omega )f_{j}.$$
This function is well-defined almost everywhere, and it belongs to all
the spaces $L^{p}(\Omega )$, $p\geq 1$. To each such function $\Phi_{c}$ is
associated a \mea\ $m_{c}$ defined on $H$ by
$$m_{c}(A)=
\P(\{\omega  \in \Omega \textrm{ ; } \Phi_{c}(\omega )\in A\})$$
for every Borel subset $A$ of $H$. This is a \ga\ \mea, and since all the
$c_{j}$'s are non-zero, its support is the whole space.

\begin{proposition}\label{prop18}
For any vector $x_{0}\in H$ and any $M \geq 1$, set
$$B_{x_{0},M}=\{\omega \in \Omega  \textrm{  ; for infinitely many }
n's, |e_{0}^{*(n)}(x_{0}+\Phi_{c}(\omega ))|\geq 2^{-n} M
  \}.$$ Then $\P(B_{x_{0},M})=1$.
\end{proposition}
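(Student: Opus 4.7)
The plan is to prove the stronger statement that $\P(A_n \text{ eventually}) = 1$, where $A_n$ denotes the event $\{|e_0^{*(n)}(x_0+\Phi_c(\omega))|\ge 2^{-n}M\}$; since $B_{x_0,M}\supseteq \liminf_n A_n$ this certainly suffices. The approach is to show that $\sum_n \P(A_n^c)<\infty$ and invoke Borel--Cantelli. The input that makes Borel--Cantelli work is a \emph{uniform positive lower bound on the variance} of the Gaussian random variables $e_0^{*(n)}(x_0+\Phi_c(\omega))$.

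First I would observe that the quantity $Y_n(\omega):=e_0^{*(n)}(x_0+\Phi_c(\omega))$ is a one-dimensional Gaussian random variable. Indeed $e_0^{*(n)}$ depends only on $\pi_{[0,\xi_n]}$, and $\pi_{[0,\xi_n]}\Phi_c(\omega)=\sum_{j=0}^{\xi_n}c_jg_j(\omega)f_j$; expanding each $f_j$ in the basis $(e_0,\dots,e_{\xi_n})$ of $F_{\xi_n}$ and setting $a_{j,n}:=e_0^{*(n)}(f_j)$, we obtain
$$Y_n(\omega)\;=\;e_0^{*(n)}(x_0)\;+\;\sum_{j=0}^{\xi_n}c_j\,a_{j,n}\,g_j(\omega),$$
which is Gaussian with mean $\mu_n=e_0^{*(n)}(x_0)$ and variance $\sigma_n^2=\sum_{j=0}^{\xi_n}c_j^2 a_{j,n}^2$.

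The key point is now that $f_0=e_0$ by construction, so $a_{0,n}=1$ for every $n$, and consequently
$$\sigma_n^{2}\;\ge\;c_0^{2}\;>\;0 \qquad \text{for every } n\ge 1.$$
This is the only quantitative input needed; in particular the mean $\mu_n$ (which could behave wildly with $n$) is irrelevant. Using the elementary fact that the density of a Gaussian of standard deviation $\sigma$ is pointwise bounded above by $(2\pi)^{-1/2}\sigma^{-1}$, we obtain
$$\P(A_n^c)\;=\;\P(|Y_n|<2^{-n}M)\;\le\;\frac{2\cdot 2^{-n}M}{\sqrt{2\pi}\,\sigma_n}\;\le\;\frac{2\,M}{c_0\sqrt{2\pi}}\cdot 2^{-n}.$$

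Since this upper bound is summable in $n$, Borel--Cantelli yields $\P(\limsup_n A_n^c)=0$, so almost surely $|Y_n|\ge 2^{-n}M$ for all but finitely many $n$, which is stronger than $\omega\in B_{x_0,M}$. There is essentially no obstacle in the argument: the apparent difficulty — that we do not control the mean $\mu_n$ or the higher-order coefficients $a_{j,n}$ at all — is dissolved by the observation that a Gaussian density is bounded uniformly in its mean, so only the lower bound on the variance matters, and this is supplied for free by the normalization $f_0=e_0$.
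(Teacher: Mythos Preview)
Your proof is correct and follows essentially the same route as the paper: identify $e_0^{*(n)}(x_0+\Phi_c(\omega))$ as a Gaussian, use $f_0=e_0$ to get the uniform lower bound $\sigma_n\ge |c_0|$ on the standard deviation, bound $\P(|Y_n|<2^{-n}M)$ via the pointwise bound on the Gaussian density, and apply Borel--Cantelli. The only minor omission is that the paper also treats the complex case (where the density is $(2\pi\sigma_n^2)^{-1}$ and one integrates over a disk, giving a bound of order $2^{-2n}M^2/|c_0|^2$), but the argument is identical in spirit.
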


\begin{proof}
Write $x_{0}=\sum_{j=0}^{+\infty }u_{j}f_{j}$. Then $\pi_{[0,\xi
_{n}]}(x_{0}+\Phi_{c}(\omega ))
=\sum_{j=0}^{\xi _{n}}(u_{j}+c_{j}g_{j}(\omega ))f_{j}$
for every $n\geq 0$. Consider the random variable
$$X_{n}(\omega )=\pss {e_{0}^{*(n)}}
 {\pi_{[0,\xi _{n}]}(x_{0}+\Phi_{c}(\omega ))}
 =
 \sum_{j=0}^{\xi _{n}}\pss {e_{0}^{*(n)}}
 {(u_{j}+c_{j}g_{j}(\omega ))f_{j}}
,$$ where for $x=\sum_{j=0}^{\xi _{n}}\alpha _{j}^{(n)}e_{j}$, $\pss
{e_{0}^{*(n)}} x =\alpha _{0}^{(n)}$. Then $X_{n}$ is a \ga\ random
variable with mean $m_{n}=\sum_{j=0}^{\xi _{n}} u_{j} \pss {e_{0}^{*(n)}}
{f_{j}} $ and variance
$$\sigma _{n}=\sqrt{\sum_{j=0}^{\xi _{n}}|c_{j}|^{2}\,
|\pss {e_{0}^{*(n)}} {f_{j}} |^{2}}\geq |c_{0}|\, |\pss {e_{0}^{*(0)}}
{f_{0}} |=|c_{0}|.$$ Let us estimate $\P(|X
_{n}|\leq 2^{-n}M)$. If the space $H$ is real,
$$\P(|X
_{n}|\leq 2^{-n}M)=\int_{-2^{-n}M}^{2^{-n}M} \exp({-\frac {1}{2
\sigma _{n}^{2}}(t-m_{n})^{2}})\frac {1}{\sigma
_{n}\sqrt{2\pi}}dt
\leq \frac {2^{-(n-1)}M}{\sigma _{n}\sqrt{2\pi}}\leq
\frac {2^{-n}M}{|c_{0}|}\cdot$$ If the space $H$ is complex,
\begin{eqnarray*}
\P(|X
_{n}|\leq 2^{-n}M)&=&\int_{\sqrt{u^{2}+v^{2}}\leq 2^{-n}M}
\exp({-\frac {1}{2
\sigma _{n}^{2}}|u+iv-m_{n}|^{2}})\frac {1}{
\sigma _{n}^{2}.{2\pi}}dudv\\
&\leq& \frac {2^{-2n}M^{2}}{2\sigma _{n}^{2}}\leq
\frac {2^{-2n}M^{2}}{2|c_{0}|^{2}}\cdot
\end{eqnarray*}
In both cases the series $\sum_{n\geq 0}\P(|X_{n}|\leq 2^{-n}M)$ is
convergent. By the Borel-Cantelli Lemma, the probability that $|X_{n}
|\leq 2^{-n}M$ for infinitely many $n$'s is zero, and this is exactly the
statement of Proposition \ref{prop18}.
\end{proof}

Theorem \ref{th2} follows immediately from Propositions \ref{prop16} and
\ref{prop18}: for any $x_{0}\in H$,
\begin{eqnarray*}
m(-x_{0}+E_{M})&=&\P(\{\omega \in \Omega \textrm{ ; }
x_{0}+\Phi_{c}(\omega )\in E_{M}\})\\ &\leq&\P(\{\omega \in \Omega
\textrm{ ; } \exists n_{0}
\;\forall n\geq n_{0} \;|e_{0}^{*(n)}(x_{0}+
\Phi_{c}(\omega ))|\leq 2^{-n}M\})=0.
\end{eqnarray*}
Hence each set $E_{M}$ is Haar null.

\subsection{The set $HC(T)^{c}$ is $\sigma $-porous}
It is not difficult to see that $HC(T)^{c}$ is also $\sigma $-porous in
this example. Indeed let $\tilde{E}_{M}$ be the set of $ x \in H$ such
that $||x||<M $ and there exists an $n_{0}$ such that for all $ n\geq
n_{0}$, $|e_{0}^{*(n)}(x)|< 2^{-n}M$. Write
$\tilde{E}_{M}=\cup_{n_{0}\geq 1}\tilde{E}_{M,n_{0}}$ where
$\tilde{E}_{M,n_{0}}$ is the set of $x \in H $ such that $ ||x||<M  $ and
for every $ n\geq n_{0}$, $|e_{0}^{*(n)}(x)|< 2^{-n}M$. We are going to
show that each one of the sets $\tilde{E}_{M,n_{0}}$ is $\frac
{1}{2}$-porous. For each $n\geq 1$, let $x_{n}\in F_{\xi _{n}}$,
$||x_{n}||=1$, be such that $e^{*(n)}
_{0}(x_{n})=||e_{0}^{*(n)}||$. If we suppose for instance that $p_{1,n}
=1$ for every $n\geq 1$, then $f_{c_{1,n}}=\gamma _{n}^{-1}(e_{c_{1,n}}
-e_{0})$ so $e_{0}^{*(n)}(f_{c_{1,n}})=-\gamma _{n}^{-1}$ and
hence $||e_{0}^{*(n)}||\geq \gamma _{n}^{-1}$. Thus by choosing $\gamma
_{n}$ sufficiently small at each step, it is possible to ensure that
$||e_{0}^{*(n)}||\geq 2^{n}$ for every $n$. So given $x\in
\tilde{E}_{M,n_{0}}$ and $\varepsilon >0$, let $0<\delta <\varepsilon $
be so small that $||z||<M$ for every $z$ such that $||z-x||\leq \delta $.
Fix $k\geq n_{0}$ such that $\frac {1}{2} \delta
||e_{0}^{*(k)}||>2\cdot 2^{-k} \cdot M$ and choose $y=x+\delta x_{k}$. Then
$||y||<M$ and $0<||y-x||<\varepsilon $. Consider $z\in B(y, \frac
{1}{2}||y-x||)=B(y,\frac {\delta }{2})$. Then
\begin{eqnarray*}
|e_{0}^{*(k)}(z)|&\geq &|e_{0}^{*(k)}(y)|-||e_{0}^{*(k)}||\, ||z-y||\\
&\geq&\left|e_{0}^{*(k)}(x)-\delta ||e_{0}^{*(k)}|| \right|-\frac {\delta
}{2} ||e_{0}^{*(k)}||\\ &\geq&\frac {\delta
}{2}||e_{0}^{*(k)}||-2^{-k}M>2^{-k}M
\end{eqnarray*}
by our assumption on $k$. Hence $z\not \in \tilde{E}_{M,n_{0}}$, and
$B(y, \frac {1}{2}||y-x||)\cap \tilde{E}_{M,n_{0}}$ is empty. This proves
that $\tilde{E}_{M,n_{0}}$ is $\frac {1}{2}$-porous.

\section{Orbit-unicellularity of $T$: proofs of Theorems \ref{th2} and \ref{th2bis}}

The main step in the proof of Theorem \ref{th2} is Theorem \ref{th2bis},
which shows that whenever $x$ and $y$ are two vectors of $H$ of norm
$1$, either the closure of the orbit of $x$ is contained in the closure
of the orbit of $y$, or the other way round. In view of Proposition
\ref{prop6}, this is quite a natural statement: the idea of the proof of
Proposition \ref{prop6} is that whenever
$\pi_{[0,\xi _{n}]}x=\sum_{j=r_{n}}^{\xi _{n}}e_{j}^{*(n)}(x)e_{j}$ with
$e_{r_{n}}^{*(n)}(x)\not = 0$, then for every vector $z$ supported in $[r_{n},
\xi _{n}]$ there exists a polynomial $p$ of degree less than $\xi _{n}$
such that $p(T_{\xi _{n}})\pi_{[0,\xi _{n}]}x=z$, and $|p|$ is
controlled by a constant which depends on $|e_{r_{n}}^{*(n)}(x)|$ (and $\xi _{n}$
of course). If our two vectors $x$ and $y$ are given:

-- either there are
infinitely many $n$'s such that the first ``large''  $e_{j}$-coordinate (in a sense
to be made precise later) of $\pi_{[0,\xi _{n}]}x$ is smaller than
the first ``large''  $e_{j}$-coordinate of $\pi_{[0,\xi _{n}]}y$, and in this case
there exists infinitely many polynomials $p_{n}$ suitably controlled
such that $||p_{n}(T_{\xi _{n}})\pi_{[0,\xi _{n}]}x-\pi_{[0,\xi _{n}]}y||\leq \frac {1}{
\xi _{n}}$ for instance for these
$n$'s,

-- or the first large coordinate appears first in $\pi_{[0,\xi _{n}]}y$
infinitely many times, and then $||p_{n}(T_{\xi _{n}})
\pi_{[0,\xi _{n}]}y-\pi_{[0,\xi _{n}]}x||\leq\frac {1}{\xi _{n}}$.

\par\smallskip
In the first case $y$ will belong to the closure of the orbit of $x$,
and in the second case $x$ will belong to the closure of the orbit of
$y$.
\par\smallskip
In order to be able to formalise this argument, we have to quantify what
it means for an $e_{j}$-coordinate to be ``large'', and for this it will
be useful to have a precise estimate on $|p|$ for polynomials $p$ such
that $p(T_{\xi _{n}})\pi_{[0,\xi _{n}]}x=z$ as above in terms of the
size of $|e_{r_{n}}^{*(n)}(x)|$.

\begin{lemma}\label{lem1b}
For every $n
\geq 1$ there exists a constant $C_{\xi _{n}}'$ depending only on $\xi _{n}$
such that the following property holds true:
\par\smallskip
for every vector $x$ of $F_{\xi _{n}}$
of norm $1$, $x=\sum_{j=r_{n}}^{\xi _{n}}e_{j}^{*(n)}(x)e_{j}$ with
$e_{r_{n}}^{*(n)}(x)\not = 0$, and for every vector $y$ of norm $1$
belonging to the linear span of the vectors
$e_{r_{n}}, \dots, e_{\xi _{n}}$, there exists a
polynomial $p$ of degree less than $\xi _{n}$ with
$$|p|\leq \frac {C_{\xi _{n}}'}{
|e_{r_{n}}^{*(n)}(x)|^{\xi _{n}-r_{n}+1}}$$ such that
$p(T_{\xi_{n}})x=y$.
\end{lemma}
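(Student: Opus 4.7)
My plan is to reduce the claim to a finite triangular linear system and control the size of its solution by induction. First I write both vectors in the $(e_{j})$-basis: $x=\sum_{j=r_{n}}^{\xi _{n}} \alpha _{j} e_{j}$ with $\alpha _{r_{n}}=e_{r_{n}}^{*(n)}(x)\neq 0$, and $y=\sum_{j=r_{n}}^{\xi _{n}} \beta _{j} e_{j}$. Since $F_{\xi _{n}}=\textrm{sp}[e_{0},\dots ,e_{\xi _{n}}]=\textrm{sp}[f_{0},\dots ,f_{\xi _{n}}]$ is finite-dimensional, the Hilbert norm $||\cdot ||$ and the $\ell ^{2}$-norm in the $(e_{j})$-basis are equivalent on $F_{\xi _{n}}$ with constants depending only on $\xi _{n}$, so $|\alpha _{j}|,|\beta _{j}|\leq K_{\xi _{n}}$ for every $j$.

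Next I look for $p(\zeta )=\sum_{k=0}^{\xi _{n}-r_{n}} c_{k}\zeta ^{k}$ (which has degree less than $\xi _{n}$ because $\xi _{n}-r_{n}<\xi _{n}$) satisfying $p(T_{\xi _{n}})x=y$. Using $T_{\xi _{n}}^{k}e_{j}=e_{j+k}$ when $j+k\leq \xi _{n}$ and zero otherwise, the equation becomes, after comparing coordinates on each $e_{\ell }$ for $\ell =r_{n},\dots ,\xi _{n}$,
$$\sum_{k=0}^{\ell -r_{n}}c_{k}\,\alpha _{\ell -k}=\beta _{\ell }.$$
This is a triangular system with $\alpha _{r_{n}}$ on the diagonal, solvable by forward substitution:
$$c_{\ell }=\frac{1}{\alpha _{r_{n}}}\Bigl(\beta _{r_{n}+\ell }-\sum_{k=0}^{\ell -1}c_{k}\,\alpha _{r_{n}+\ell -k}\Bigr),\qquad \ell =0,1,\dots ,\xi _{n}-r_{n}.$$

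Then I estimate the $c_{\ell }$'s inductively. I claim that there is a constant $M_{\xi _{n}}$, depending only on $\xi _{n}$ and $K_{\xi _{n}}$, such that $|c_{\ell }|\leq M_{\xi _{n}}/|\alpha _{r_{n}}|^{\ell +1}$. Indeed the base case $|c_{0}|\leq K_{\xi _{n}}/|\alpha _{r_{n}}|$ is immediate, and if the bound holds up to index $\ell -1$ then
$$|c_{\ell }|\leq \frac{K_{\xi _{n}}}{|\alpha _{r_{n}}|}+\frac{K_{\xi _{n}}}{|\alpha _{r_{n}}|}\sum_{k=0}^{\ell -1}\frac{M_{\xi _{n}}}{|\alpha _{r_{n}}|^{k+1}}\leq \frac{M_{\xi _{n}}}{|\alpha _{r_{n}}|^{\ell +1}}$$
provided $M_{\xi _{n}}$ is chosen large enough in terms of $K_{\xi _{n}}$ and $\xi _{n}$ (here one may freely enlarge the bound, using $|\alpha _{r_{n}}|\leq K_{\xi _{n}}$ to unify cases where $|\alpha _{r_{n}}|$ is larger or smaller than $1$). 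Summing,
$$|p|=\sum_{\ell =0}^{\xi _{n}-r_{n}}|c_{\ell }|\leq (\xi _{n}-r_{n}+1)\,\frac{M_{\xi _{n}}}{|\alpha _{r_{n}}|^{\xi _{n}-r_{n}+1}}\leq \frac{C'_{\xi _{n}}}{|e_{r_{n}}^{*(n)}(x)|^{\xi _{n}-r_{n}+1}},$$
which is the desired estimate.

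There is no real obstacle here: the triangularity of the system forces the shape of the denominator, and the only care needed is to absorb the combinatorial growth of the recursion and the comparison of norms on $F_{\xi _{n}}$ into a single constant $C'_{\xi _{n}}$ depending on $\xi _{n}$ alone. The only mildly delicate point is handling the case where $|\alpha _{r_{n}}|$ might exceed $1$; this is harmless because $|\alpha _{r_{n}}|$ is bounded above by $K_{\xi _{n}}$, so enlarging $C'_{\xi _{n}}$ by a factor $K_{\xi _{n}}^{\xi _{n}+1}$ turns the required inequality into a uniform one.
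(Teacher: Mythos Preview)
Your approach is essentially the same as the paper's: both reduce $p(T_{\xi_n})x=y$ to a lower-triangular Toeplitz system with diagonal entry $\alpha_{r_n}=e_{r_n}^{*(n)}(x)$ and then bound the solution. The paper phrases the estimate as a bound on $\|M_{\xi_n}(x)^{-1}\|_{\mathcal{B}(\ell_1)}$ rather than via forward substitution, but the content is identical.

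There is, however, a small slip in your inductive step. You claim that a single constant $M_{\xi_n}$ makes
\[
\frac{K_{\xi_n}}{|\alpha_{r_n}|}+\frac{K_{\xi_n}}{|\alpha_{r_n}|}\sum_{k=0}^{\ell-1}\frac{M_{\xi_n}}{|\alpha_{r_n}|^{k+1}}\le\frac{M_{\xi_n}}{|\alpha_{r_n}|^{\ell+1}}
\]
for every $\ell$. Already at $\ell=1$ this reads $K_{\xi_n}|\alpha_{r_n}|+K_{\xi_n}M_{\xi_n}\le M_{\xi_n}$, which is impossible whenever $K_{\xi_n}\ge 1$. The right inductive hypothesis carries an $\ell$-dependent numerator: one checks easily that $|c_\ell|\le K_{\xi_n}(1+K_{\xi_n})^{\ell}/|\alpha_{r_n}|^{\ell+1}$, and since $\ell\le\xi_n-r_n\le\xi_n$ one may then absorb $(1+K_{\xi_n})^{\xi_n}$ into $C'_{\xi_n}$. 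With this correction your argument is complete and matches the paper.
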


\begin{proof}
If $p({\zeta })=\sum_{u=0}^{\xi _{n}}a_{u}\zeta ^{u}$, then since
$T_{\xi _{n}}^{u}e_{j}=e_{j+u}$ for $j+u\leq \xi _{n}$ and
$T_{\xi _{n}}^{u}e_{j}=0$ for $j+u>\xi _{n}$, we have
\begin{eqnarray*}
p(T_{\xi _{n}})x&=&\sum_{u=0}^{\xi _{n}} a_{u}\sum_{j=r_{n}}^{\xi _{n}}
e_{j}^{*(n)}(x)e_{j+u}\\
&=&\sum_{j=r_{n}}^{\xi _{n}}e_{j}^{*(n)}(x)\sum_{u=j}^{\xi _{n}}
a_{u-j}e_{u}=\sum_{u=r_{n}}^{\xi _{n}} \left(
\sum_{j=r_{n}}^{u}e_{j}^{*(n)}(x)a_{u-j}\right)e_{u}.
\end{eqnarray*}
Hence solving the equation $p(T_{\xi _{n}})x=y$ boils down to solving
the system of $\xi _{n}-r_{n}+1$ equations
$$\sum_{j=r_{n}}^{u}e_{j}^{*(n)}(x)a_{u-j}=e_{j}^{*(n)}(y) \quad
\textrm{ for } u=r_{n},\dots, \xi _{n}.$$
This can be written in matrix form as
$$
\begin{pmatrix}
e_{r_{n}}^{*(n)}(x) & & (0)\\
e_{r_{n}+1}^{*(n)}(x)&e_{r_{n}}^{*(n)}(x) &  &\\
\vdots &\ddots &  &\\
e_{\xi _{n}}^{*(n)}(x) &\ldots&e_{r_{n}}^{*(n)}(x) & \\
\end{pmatrix}
\begin{pmatrix}
a_{0}\\
a_{1}\\
\vdots \\
a_{\xi _{n}-r_{n}} \\
\end{pmatrix}=
\begin{pmatrix}
e_{r_{n}}^{*(n)}(y)\\
e_{r_{n}+1}^{*(n)}(y)\\
\vdots \\
e_{\xi _{n}}^{*(n)}(y)\\
\end{pmatrix}
$$
and if $M_{\xi _{n}}(x)$ denotes the square matrix of size
$\xi _{n}-r_{n}+1$ on the left-hand side, then it is invertible. If we
choose
$$\begin{pmatrix}
a_{0}\\
a_{1}\\
\vdots \\
a_{\xi _{n}-r_{n}} \\
\end{pmatrix}=M_{\xi _{n}}(x)^{-1}
\begin{pmatrix}
e_{r_{n}}^{*(n)}(y)\\
e_{r_{n}+1}^{*(n)}(y)\\
\vdots \\
e_{\xi _{n}}^{*(n)}(y)\\
\end{pmatrix}$$
then $p(T_{\xi _{n}})x=y$.
Hence $$|p|\leq ||M_{\xi _{n}}(x)^{-1}||_{\mathcal{B}(\ell_{1})}
\sum_{j=r_{n}}^{\xi _{n}}|e_{j}^{*(n)}(y)|\leq ||M_{\xi _{n}}(x)^{-1}||_{\mathcal{B}(\ell_{1})}
A_{\xi _{n}}$$ since $||y||=1$, and
$$||M_{\xi _{n}}(x)^{-1}||_{\mathcal{B}(\ell_{1})}\leq \frac {B_{\xi _{n}}}{
|e_{r_{n}}^{*(n)}(x)|^{\xi _{n}-r_{n}+1}}$$ since $||x||=1$, which
proves Lemma \ref{lem1b}.
\end{proof}

Let now $x$ and $y$ be our two vectors of $H$ with $||x||=||y||=1$. We
will say that the $e_{j}$-coordinate of $\pi_{[0,\xi _{n}]}x$ is large
if $$|e_{j}^{*(n)}(x)|\geq \frac {1}{C_{\xi _{n}}^{\xi _{n}-j+1}}$$
where $C_{\xi _{n}}$ is a constant depending only on $\xi _{n}$ which
will be chosen later on in the proof.
A first point is:

\begin{fact}\label{fact2b}
Provided the sequence $(C_{\xi _{n}})$ grows fast enough, for every $x\in
H$, $||x||=1$, there exists an $n_{0}$ such that for every $n\geq
n_{0}$, there exists a $j\in [0,\xi _{n}]$ with
$$|e_{j}^{*(n)}(x)|\geq \frac {1}{C_{\xi _{n}}^{\xi _{n}-j+1}} \cdot$$
\end{fact}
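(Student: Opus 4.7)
The plan is to prove the contrapositive in each step: if, at some stage $n$, \emph{every} coordinate of $\pi_{[0,\xi_n]}x$ in the basis $(e_j)_{0\le j\le\xi_n}$ fails the inequality $|e_j^{*(n)}(x)|\ge C_{\xi_n}^{-(\xi_n-j+1)}$, then $\|\pi_{[0,\xi_n]}x\|$ is automatically very small. Since $(f_j)_{j\ge 0}$ is the orthonormal basis of $H$, we have $\|\pi_{[0,\xi_n]}x\|\to\|x\|=1$ as $n\to\infty$, so this bad situation cannot persist and we obtain the required $n_0$.

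To quantify this, first exploit the equivalence of norms on the finite-dimensional subspace $F_{\xi_n}$: since the construction of the vectors $f_0,\dots,f_{\xi_n}$ has been completed through step $n-1$, there is a constant $D_{\xi_n}$ depending only on steps $0$ to $n-1$ such that $\|e_j\|\le D_{\xi_n}$ for every $j\in[0,\xi_n]$. Writing $\pi_{[0,\xi_n]}x=\sum_{j=0}^{\xi_n}e_j^{*(n)}(x)\,e_j$ and using the triangle inequality gives
\[
\|\pi_{[0,\xi_n]}x\|\,\le\,D_{\xi_n}\sum_{j=0}^{\xi_n}|e_j^{*(n)}(x)|.
\]
If the conclusion of the fact fails at step $n$, then the right-hand side is bounded by the geometric sum
\[
D_{\xi_n}\sum_{j=0}^{\xi_n}C_{\xi_n}^{-(\xi_n-j+1)}\,=\,D_{\xi_n}\sum_{k=1}^{\xi_n+1}C_{\xi_n}^{-k}\,\le\,\frac{2\,D_{\xi_n}}{C_{\xi_n}},
\]
provided $C_{\xi_n}\ge 2$.

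Now I would specify the growth of $(C_{\xi_n})$ by requiring, at step $n$ of the construction, that $C_{\xi_n}\ge 4D_{\xi_n}$ (together with any other lower bounds the later arguments need). This is legitimate because $D_{\xi_n}$ is determined by the construction up to step $n-1$, whereas $C_{\xi_n}$ enters only from step $n$ onwards, so there is no circularity. With this choice, the failure of the conclusion at step $n$ forces $\|\pi_{[0,\xi_n]}x\|<\tfrac{1}{2}$. But for any fixed $x\in H$ of norm $1$, we have $\|\pi_{[0,\xi_n]}x\|\to 1$ as $n\to\infty$ by orthonormality of $(f_j)$, so there is an $n_0=n_0(x)$ such that $\|\pi_{[0,\xi_n]}x\|>\tfrac{1}{2}$ for all $n\ge n_0$. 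For every such $n$ the bad situation is impossible, so at least one coordinate $|e_j^{*(n)}(x)|$ must satisfy the lower bound $C_{\xi_n}^{-(\xi_n-j+1)}$, which is the claim.

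No step of this argument is genuinely difficult; the only bookkeeping issue to watch is the order of quantifiers in the construction (that $D_{\xi_n}$ is available before $C_{\xi_n}$ is chosen), and the fact that the $n_0$ produced depends on $x$, which is exactly what the statement asks for.
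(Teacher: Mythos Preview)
Your proof is correct and follows essentially the same approach as the paper: assume the inequality fails for every $j$ at step $n$, bound $\|\pi_{[0,\xi_n]}x\|$ by $(\sup_{j\le\xi_n}\|e_j\|)\sum_j C_{\xi_n}^{-(\xi_n-j+1)}$, and observe that a suitable choice of $C_{\xi_n}$ (the paper takes $\sqrt{C_{\xi_n}}\ge\sup_{j\le\xi_n}\|e_j\|$, you take $C_{\xi_n}\ge 4D_{\xi_n}$) forces this to be too small once $n$ is large. Your explicit remark about the order of dependence between $D_{\xi_n}$ and $C_{\xi_n}$ is a welcome clarification.
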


\begin{proof}
Suppose on the contrary  that for every $j\in[0,\xi _{n}]$,
$|e_{j}^{*(n)}(x)|\leq {1}/{C_{\xi _{n}}^{\xi _{n}-j+1}} $. Then
$$||\pi_{[0,\xi _{n}]}x||\leq \sum_{j=0}^{\xi _{n}}|e_{j}^{*(n)}(x)|\, \sup_{0\leq j
\leq \xi _{n}}||e_{j}||\leq \frac {1}{C_{\xi _{n}}-1}
\, \sup_{0\leq j
\leq \xi _{n}}||e_{j}||.$$ If $\sqrt{C_{\xi _{n}}}\geq  \sup_{0\leq j
\leq \xi _{n}}||e_{j}|| $ for instance,
$||\pi_{[0,\xi _{n}]}x||\leq {\sqrt{C_{\xi _{n}}}}/({C_{\xi _{n}}-1})$,
and since $||x||=1$ this is impossible if $n$ is large enough and $C_{\xi_{n}}$
goes fast enough to infinity.
\end{proof}

\subsection{Proof of Theorem \ref{th2bis}}
Denote by $j_{n}(x)$ the smallest integer $j$ in $[0,\xi _{n}]$ such
that $|e_{j}^{*(n)}(x)|\geq {1}/{C_{\xi _{n}}^{\xi _{n}-j+1}} $. Then
either for infinitely many $n$'s $j_{n}(x)\leq j_{n}(y)$, or for
infinitely many $n$'s $j_{n}(y)\leq j_{n}(x)$. In the rest of the proof we suppose
that $j_{n}(x)\leq j_{n}(y)$ for infinitely many $n$'s and write $j_{n}=
j_{n}(x)$:
$$|e_{j_{n}}^{*(n)}(x)|\geq \frac{1}{C_{\xi _{n}}^{\xi _{n}-j_{n}+1}}
$$ and for every $j<j_{n}$,
$$|e_{j}^{*(n)}(x)|\leq \frac{1}{C_{\xi _{n}}^{\xi _{n}-j+1}}
\;\;\textrm{ and }\;\;
|e_{j}^{*(n)}(y)|\leq \frac{1}{C_{\xi _{n}}^{\xi _{n}-j+1}} \cdot$$
By Lemma \ref{lem1b} applied to the two vectors
$x'=\sum_{j=j_{n}}^{\xi _{n}}e_{j}^{*(n)}(x)e_{j}$
and $y'=\sum_{j=j_{n}}^{\xi _{n}}e_{j}^{*(n)}(y)e_{j}$, there exists a
polynomial $p_{n}$ of degree less than $\xi _{n}$ with
$|p_{n}|\leq C'_{\xi _{n}}. C_{\xi _{n}}^{\xi_{n}-j_{n}+1}$ such that
$$\left|\left|p_{n}(T_{\xi _{n}})\left(\sum_{j=j_{n}}^{\xi _{n}}e_{j}^{*(n)}(x)e_{j}
\right)-\sum_{j=j_{n}}^{\xi _{n}}e_{j}^{*(n)}(y)e_{j}\right|\right|\leq
\frac {1}{\xi_{n}}\cdot$$
Since
\begin{eqnarray*}
||\pi_{[0,\xi _{n}]}y-
\sum_{j=j_{n}}^{\xi _{n}}e_{j}^{*(n)}(y)e_{j}||&=&
||\sum_{j=0}^{j_{n}-1}e_{j}^{*(n)}(y)e_{j}||
\leq
\sum_{j=0}^{j_{n}-1}|e_{j}^{*(n)}(y)|\; \sup_{0\leq j<j_{n}} ||e_{j}||\\
&\leq& \frac {\sqrt{C_{\xi_{n}}}}{C_{\xi _{n}}-1}\leq \frac {1}{\xi _{n}}
\end{eqnarray*}
if $ \sup_{0\leq j
\leq \xi _{n}}||e_{j}|| \leq \sqrt{C_{\xi _{n}}} $ as above and
 $C_{\xi _{n}}$ grows fast enough, we get
$$\left|\left|p_{n}(T_{\xi _{n}})\left(\sum_{j=j_{n}}^{\xi _{n}}e_{j}^{*(n)}(x)e_{j}
\right)-\pi_{[0,\xi _{n}]}y\right|\right|\leq \frac {2}{\xi _{n}}\cdot$$
Then
\begin{eqnarray*}
||\pi_{[0,\xi _{n}]}y-p_{n}(T_{\xi _{n}})\left(
\sum_{j=0}^{j_{n}-1}e_{j}^{*(n)}(x)e_{j}\right)||&\leq&|p_{n}|2^{\xi _{n}}
\sum_{j=0}^{j_{n}-1}|e_{j}^{*(n)}(x)| \sqrt{C_{\xi_{n }}}\\
&\leq&
 {C'_{\xi _{n}}}{C_{\xi _{n}}^{\xi_{n}-j_{n}+1}}2^{\xi _{n}}
\sum_{j=0}^{j_{n}-1}\frac {1}{C_{\xi_{n}}^{\xi _{n}-j+1}}
 \sqrt{C_{\xi _{n}}}\\
&\leq&
 {C'_{\xi _{n}}}{C_{\xi _{n}}^{\xi_{n}-j_{n}+1}}2^{\xi _{n}}
\frac {2}{C_{\xi _{n}}^{\xi _{n}-j_{n}+2}} \sqrt{C_{\xi _{n}}}\\
&\leq&
\frac {C'_{\xi _{n}}2^{\xi _{n}}}{\sqrt{C_{\xi _{n}}}}\cdot
\end{eqnarray*}
Since $C_{\xi _{n}}$ can be chosen very large \wrt\ $C'_{\xi _{n}}$, we
can ensure that the quantity on the righthand side is less than $1/\xi
_{n}$, and hence
$$||p_{n}(T_{\xi _{n}})\pi_{[0,\xi _{n}]}x-\pi_{[0,\xi _{n}]}y||\leq \frac {3}{
\xi _{n}}\cdot$$
Now $|p_{n}|$ is controlled by a constant $D_{\xi _{n}}$
which depends only on $\xi _{n}$, and the same argument as in
Section $4$ (choosing $b_{n}$
very large \wrt\ $\xi _{n}$) shows that $$
||p_{n}(T)\pi_{[0,\xi _{n}]}x-\pi_{[0,\xi _{n}]}y||\leq \frac {4}{
\xi _{n}}\cdot$$ The polynomial $p_{n}$ has all the properties we want,
except for the fact that $\zeta $ does not necessarily divide $p_{n}(\zeta
)$, so consider $\tilde{p}_{n}(\zeta )=\zeta p_{n}(\zeta )$:
$$||\tilde{p}_{n}(T)\pi_{[0,\xi _{n}]}x-T(\pi_{[0,\xi _{n}]}y)||\leq \frac {8}{
\xi _{n}},$$
$|\tilde{p}_{n}|\leq D_{\xi _{n}}$ and the degree of $\tilde{p}_{n}$ is
less than $\xi _{n}+1$ (and not $\xi _{n}$ as before, but this is not a problem, as will
be seen shortly). We take as previously
$q_{n}(\zeta )=\frac {\zeta ^{b_{n}}}{b_{n}}\tilde{p}_{n}(\zeta )=
\frac {\zeta ^{b_{n}+1}}{b_{n}}{p}_{n}(\zeta )$: $|q_{n}|<1$ and the
degree of $q_{n}$ is less than $\nu _{n}=\xi _{n}(b_{n}+1)$. We have
$$||\left(\frac {T^{b_{n}}}{b_{n}}p_{n}(T)-p_{n}(T)\right)\pi_{
[0,\xi _{n}]}x||\leq \frac {C_{\xi _{n}}}{b_{n}}|p_{n}|2^{\xi _{n}}$$ by
Fact \ref{fact12}, so $$
||q_{n}(T)\pi_{[0,\xi _{n}]}x-\tilde{p}_{n}(T)\pi_{[0,\xi _{n}]}x||\leq
\frac {C_{\xi _{n}}}{b_{n}}|p_{n}|2^{\xi _{n}+1}\leq \frac {1}{\xi_{n}}$$
if $b_{n}$ is large enough.
Thus $$||q_{n}(T)\pi_{[0,\xi _{n}]}x-T(\pi_{[0,\xi n]}y)||\leq \frac {9}{\xi _{n}}
$$
and the proof then goes as in Proposition \ref{prop6}: for some $k\in
[1,k_{n}]$,
$$||T^{c_{k,n}}x-T(\pi_{[0,\xi _{n}]}y)||\leq \frac {10}{\xi _{n}}\cdot$$
Since $T(\pi_{[0,\xi _{n}]}y)$ tends to $Ty $ as $n$ tends to infinity,
this shows that $Ty$ belongs to the closure of the orbit of $x$. But
since $\overline{\mathcal{O}rb}(y,T)$ and $\overline{\textrm{sp}}[p(T)y \textrm{ ; }
p\in \K[\zeta ]]$ coincide, $y$ is a \hy\ vector for the \op\
induced by $T$ on $\overline{\textrm{sp}}[p(T)y \textrm{ ; }
p\in \K[\zeta ]]$, and thus $y$ is the limit of some sequence
$(T^{n_{j}}y)$. Hence $y\in \overline{\mathcal{O}rb}(x,T)$, which
proves that $\overline{\mathcal{O}rb}(y,T)\subseteq
\overline{\mathcal{O}rb}(x,T)$. This finishes the main part of the proof of Theorem
\ref{th2bis}.
\par\smallskip
We still have to prove that if $M$ is any non trivial invariant subspace
of $T$, the \op\ induced by $T$ on $M$ is \hy. Let $U$ and $V$ be two non
empty open subsets of $M$, with $u\in U$, $v\in V$. Since $T$ is
orbit-unicellular, either $\overline{\mathcal{O}rb}(u,T)\subseteq
\overline{\mathcal{O}rb}(v,T)$ or $\overline{\mathcal{O}rb}(v,T)\subseteq
\overline{\mathcal{O}rb}(u,T)$. Suppose for instance that we are in the
first case: $U$ and $V$ both intersect $\overline{\mathcal{O}rb}(v,T)\subseteq
M$, so there exist two integers $p$ and $q$, $q>p$, such that $T^{p}v\in U$
and $T^{q}v\in V$. Hence $T^{q-p}(U)\cap V$ is non empty. The same argument works if the
inclusion of the orbits of $u$ and $v$ is in the reverse direction, and this
proves that $T$ acting on $M$ is topologically transitive. The usual
Baire Category argument shows then that $T$ acting on $M$ is \hy, which
finishes the proof of Theorem \ref{th2bis}.

\subsection{Proof of Theorem \ref{th2}}
The proof of Theorem \ref{th2} is now easy, and follows the classical
argument which shows that any unicellular \op\ must be cyclic (see for instance
\cite{RR}): let $(x_{\alpha })_{\alpha \in A}$ be the family of all
non-\hy\ vectors for $T$, and for each $\alpha \in A$ write $M_{\alpha }=
\overline{\mathcal{O}rb}(x_{\alpha },T)$ (which is a closed nontrivial
subspace of $T$). So $$HC(T)^{c}=\bigcup_{\alpha \in A} M_{\alpha }$$ is a
linear subspace of $H$. If $HC(T)^{c}$ is not dense in $H$, then it is
contained in a closed hyperplane, and $HC(T)^{c}$ is clearly Gauss null.
So we can suppose that $HC(T)^{c}$ is dense in $H$. Then let $(x_{\alpha _{i}})_{
i\geq 0}$ be a countable subset of $HC(T)^{c}$ which is dense in $HC(T)^{c}$
(and hence in $H$). We are going to show that $$HC(T)^{c}=\bigcup_{i=0}^{+\infty }
M_{\alpha _{i}}.$$ Let $\alpha \in A$: we want to show that for some $i$,
$M_{\alpha }\subseteq  M_{\alpha _{i}}$. If this is not true, then by
Theorem \ref{th2bis} this means that $M_{\alpha _{i }}\subseteq M_{\alpha }$
for every $i$, hence $x_{\alpha _{i}}\in M_{\alpha }$ for every $i$.
Since $(x_{\alpha _{i}})_{i\geq 0}$ is dense in $H$, $M_{\alpha }=H$, so
$x_{\alpha }$ is \hy, a contradiction. Thus $HC(T)^{c}=\cup_{i=0}^{+\infty }M_{\alpha _{i}}$
is a countable union of subsets of closed hyperplanes, and hence is
Gauss null.

\subsection{Proof of Proposition \ref{prop2ter}}
Suppose that every \op\ on an infinite dimensional separable Hilbert
space has a non trivial invariant subspace, and let $T\in \mathcal{B}(H)$
satisfy the assumptions of condition (b) in Proposition \ref{prop2ter}.
It is not difficult to see that $HC(T)^{c}$ can be written as a strictly
increasing union of closures of orbits
$$HC(T)^{c}=\bigcup_{n\in\Z} M_{{n}} \qquad \textrm{ with } M_{{n}}
\subsetneq M_{{n+1}} \textrm{ for every } n\in\Z.$$  Indeed
consider the decomposition $HC(T)^{c}=\cup_{i=0}^{+\infty }M_{\alpha _{i}}$
obtained in the proof of Theorem \ref{th2}. Take $M_{{0}}=M_{\alpha
_{0}}$. Since $M_{\alpha _{1}}\not = H$ and the sequence $(x_{\alpha _{i}})$
is dense in $H$, there exists an $\alpha _{i}$ such that
 $M_{{0}}\subsetneq M_{\alpha _{i}}$. Take $i_{1}$ to be the smallest
 integer such that this property holds true, and set $M_{{1}}=M_{{\alpha
 _{i_{1}}}}$. In the same way let $j_{1}$ be the smallest integer such
 that $M_{\alpha _{j_{1}}}\subsetneq M_{0}$, and set $M_{-1}=M_{\alpha
 _{j_{1}}}$. In this fashion we construct two strictly increasing
 sequences $(i_{n})_{n\geq 1}$
 and $(j_{n})_{n\geq 1}$ of integers
having the property that for every $i<i_{n}$, $M_{\alpha _{i}}
 \subseteq M_{i_{n-1}}$ and $M_{i_{n-1}}\subsetneq M_{i_{n}}$, and
for every $j<j_{n}$, $M_{\alpha _{j_{n-1}}}
 \subseteq M_{j}$ and $M_{j_{n}}\subsetneq M_{j_{n-1}}$. Setting
 $M_{n}=M_{i_{n}}$  and $M_{-n}=M_{j_{n}}$ for $n\geq 1$, we get that
 this sequence of subspaces is strictly increasing, and that for every
$i\geq 0$ there exists an $n$ such that $M_{{-n}}
\subseteq M_{\alpha _{i}}\subseteq
M_{{n}}$. Hence $HC(T)^{c}=\cup_{n\in\Z} M_{n}$. For $n\in\Z$ set
$\Phi(n)=M_{n}$. Since $M_{n}\subsetneq M_{n+1}$, $M_{n+1}/M_{n}$ is non
trivial, and by the argument given in the introduction this quotient
must be a Hilbert space of infinite dimension. By our assumption, $T$
acting on $M_{n+1}/M_{n}$ has a non trivial invariant subspace. This
means that there exists $M$ invariant for $T$ such that
$M_{n}\subsetneq M
\subsetneq M_{n+1}$. Set $\Phi(n+1/2)=M$. Continuing in this
fashion, we can define in the obvious way the
subspaces $\Phi(n+\sum_{k\in I}2^{-k})$ where $I$
 is any finite subset of the set of positive integers.
 Clearly if $n_{1}+\sum_{k\in I_{1}}2^{-k}<
 n_{2}+\sum_{k\in I_{2}}2^{-k}$,
 $\Phi(n_{1}+\sum_{k\in I_{1}}2^{-k})
\subsetneq\Phi(n_{2}+\sum_{k\in I_{2}}2^{-k})$.
 We now wish to extend $\Phi$
 to an increasing and injective application from $\R$ into the set of
 invariant subspaces (or equivalently orbits) of $T$. For $t\in\R$ set
 $$\Phi(t)=\overline{\bigcup \Phi(n+\sum_{k\in I}2^{-k})},$$ where
 the union is taken over all the numbers of the form $n+\sum_{k\in I}
2^{-k}$
 which are less or equal to $t$. This is clearly an invariant subspace
 of $T$, and if $t\leq s$ obviously $\Phi(t)\subseteq \Phi(s)$. If
 $t<s$, there exist two numbers of the form
 $n+\sum_{k\in I}2^{-k}$ such that
 $$t<n_{1}+\sum_{k\in I_{1}}2^{-k}<n_{2}+
 \sum_{k\in I_{2}}2^{-k}<s.$$ Hence
 $\Phi(t)\subseteq
 \Phi(n_{1}+\sum_{k\in I_{1}}2^{-k})\subsetneq
 \Phi(n_{2}+
 \sum_{k\in I_{2}}2^{-k})\subseteq
 \Phi(s)$, and thus $\Phi$ is increasing and injective.

\section{Orbit-reflexive operators: proof of Theorem \ref{th3}}

Let $T$ be the \op\ constructed in Section $4$.
In order to show that $T$ is not orbit-reflexive, it suffices to exhibit
an \op\ $A$ which has the property that $Ax\in \overline{\mathcal{O}rb}(x,T)$
for every $x\in H$, but $A$ does not commute with $T$. The natural idea would be
to consider $A$ defined by $Ae_{0}=0$ and $Ae_{i}=e_{i+1}$ for $i\geq 1$.
Unfortunately this \op\ can be unbounded: suppose for instance that
$p_{1,n}=1$: $f_{c_{1,n}}=\gamma_{n} ^{-1} (e_{c_{1,n}}-e_{0})$,
$$Af_{c_{1,n}}=\gamma _{n}^{-1}e_{c_{1,n}+1}
=f_{c_{1,n}+1}+\gamma _{n}^{-1}e_{1}$$ and thus $A$ is unbounded.
A way to circumvent this difficulty is to modify the construction of $T$
and to take for the $p_{k,n}$'s
polynomials whose $0$-coefficient vanishes: let $(p_{k,n})
_{1\leq k\leq k_{n}}$ be a $4^{-\nu _{n}}$-net of the set of
polynomials $p$ of degree less than $\nu _{n}$ such that $|p|\leq 1$ and
$p(0)=0$. Then the definition of $f_{j}$ for $j\geq 1$ in the (b)- and
(c)-working intervals depends only on $e_{j}$ for $j\geq 1$. Since
$Ae_{j}=Te_{j}$ for $j\geq 1$, this yields that $ Af_{j}=Tf_{j}$ for
every $j\geq 1$. Hence

\begin{fact}\label{fact19}
The \op\ $A$ is bounded on $H$.
\end{fact}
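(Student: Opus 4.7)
The plan is to exploit the new hypothesis $p_{k,n}(0)=0$ to show that every $f_j$ with $j\geq 1$ has vanishing $e_0$-component; once this is established, the operator $A$ will differ from $T$ only by a rank-one perturbation accounting for the discrepancy at $f_0 = e_0$.

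First I would carry out a short case analysis to verify that for every $j\geq 1$, the vector $f_j$ lies in $\overline{\vect}[e_i \textrm{ ; } i\geq 1]$. When $j$ belongs to a lay-off interval, $f_j = \lambda_j e_j$ with $j\geq 1$ and the claim is immediate. When $j$ lies in a (b)-working interval $[r(b_n+1), rb_n+\xi_n]$ with $r\geq 1$, $f_j = e_j - b_n e_{j-b_n}$ and $j-b_n\geq 1$. When $j$ lies in a (c)-working interval $I_{r_1,\dots,r_t}$, then $j\geq c_{t,n}$ and $f_j = \gamma_n^{-1} 4^{1-|r|}(e_j - p_{t,n}(T)e_{j-c_{t,n}})$; writing $p_{t,n}(\zeta)=\sum_{u=0}^{d_n}a_u\zeta^u$ with $a_0=0$, the expansion
$$p_{t,n}(T)e_{j-c_{t,n}} = \sum_{u=1}^{d_n} a_u e_{j-c_{t,n}+u}$$
starts at $u=1$, and since $j\geq c_{t,n}$ every index appearing in this expansion is at least $1$.

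Next, since $Ae_i = e_{i+1} = Te_i$ for every $i\geq 1$, linearity yields $Af_j = Tf_j$ for every $j\geq 1$, while $Af_0 = Ae_0 = 0$. Consequently, for any $x = \sum_{j} x_j f_j \in c_{00}$, decomposing $x = x_0 f_0 + y$ with $y = \sum_{j\geq 1} x_j f_j$, we obtain $Ax = Ay = Ty = T(x-x_0 f_0) = Tx - x_0 Tf_0$, so that
$$\|Ax\| \leq \|T\|\,\|x\| + |x_0|\,\|Tf_0\| \leq (\|T\| + \|Tf_0\|)\,\|x\|,$$
using $|x_0| = |\langle x, f_0\rangle|\leq \|x\|$ since $(f_j)_{j\geq 0}$ is orthonormal in $H$. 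Density of $c_{00}$ in $H$ then shows that $A$ extends uniquely to a bounded operator on $H$, with $\|A\|\leq \|T\| + \|Tf_0\|$.

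The argument is essentially bookkeeping; the only subtle point is the (c)-working case, where the assumption $p_{k,n}(0)=0$ is genuinely essential. Without it, the coefficient $a_0$ would contribute a term $-\gamma_n^{-1} 4^{1-|r|} a_0\, e_{j-c_{t,n}}$ to $f_j$, and for $j=c_{t,n}$ (with $t=1$ and all other $r_l=0$) this would produce an $e_0$-component in $f_{c_{1,n}}$; then $Af_{c_{1,n}}$ would differ from $Tf_{c_{1,n}}$ by the uncontrolled term $\gamma_n^{-1} a_0\, e_1$, recovering exactly the unboundedness phenomenon noted for the naive choice $p_{1,n}=1$.
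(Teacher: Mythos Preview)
Your proof is correct and follows essentially the same approach as the paper: show that under the hypothesis $p_{k,n}(0)=0$ every $f_j$ with $j\geq 1$ is supported on $\{e_i : i\geq 1\}$, whence $Af_j=Tf_j$ for $j\geq 1$ and $A$ differs from the bounded operator $T$ only by a rank-one correction at $f_0$. The paper compresses this to a single sentence, whereas you spell out the case analysis and the explicit norm bound, but the substance is identical.
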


Remark that with this choice of the polynomials $p_{k,n}$, $T$ is no
longer \hy. Clearly $A$ and $T$ do not commute, since $TAe_{0}=0$ while
$ATe_{0}=Ae_{1}=e_{2}$. Theorem \ref{th3} is a direct consequence of this
and the next proposition.

\begin{proposition}\label{prop20}
For every $x\in H$, $Ax $ belongs to the closure of the orbit of $x$
under the action of $T$.
\end{proposition}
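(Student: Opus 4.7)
The plan is to imitate the proof of Proposition \ref{prop16}, now with target $Ax$ in place of $e_1$. Since $A=T-e_1\otimes e_0^*$, we have $Ax=Tx-x_0 e_1=T(x-x_0 e_0)$, where $x_0=e_0^*(x)$; the choice $p_{k,n}(0)=0$ guarantees that every $f_j$ with $j\geq 1$ has zero $e_0$-component, so $x_0$ is equally the coefficient of $f_0=e_0$ in the $(f_j)$-expansion of $x$. If $x_0=0$ then $Ax=Tx=T^1 x\in\mathcal{O}rb(x,T)$; assume henceforth $x_0\neq 0$.

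Because $x_0\neq 0$, the first non-zero $e_j$-coordinate of $\pi_{[0,\xi_n]}x$ is $r_n=0$. Applying Lemma \ref{lem1b} (after normalization) to the pair $(\pi_{[0,\xi_n]}x,\,\pi_{[0,\xi_n]}x-x_0 e_0)$ produces a polynomial $P_n$ of degree less than $\xi_n$ with $|P_n|\leq C'_{\xi_n}(\|x\|/|x_0|)^{\xi_n+1}$ and
$$P_n(T_{\xi_n})\pi_{[0,\xi_n]}x=\pi_{[0,\xi_n]}x-x_0 e_0.$$
Since $\|(P_n(T)-P_n(T_{\xi_n}))\pi_{[0,\xi_n]}x\|\ls |P_n|\cdot\sup_{\xi_n< j\leq 2\xi_n}\|e_j\|\ls |P_n|\cdot 2^{-\frac{1}{2}\sqrt{b_n}}$ is small once $b_n$ is large, setting $p_n(\zeta)=\zeta P_n(\zeta)$ yields
$$p_n(T)\pi_{[0,\xi_n]}x \approx T(\pi_{[0,\xi_n]}x-x_0 e_0)\approx Ax.$$

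To make the polynomial admissible for the $4^{-\nu_n}$-net, we apply the (b)-fan trick: put $q_n(\zeta)=\frac{\zeta^{b_n+1}}{b_n}P_n(\zeta)$, so $q_n(0)=0$, $\deg q_n\leq b_n+\xi_n<\nu_n$, and $|q_n|=|P_n|/b_n$. For the fixed vector $x$, the bound on $|P_n|$ grows at most exponentially in $\xi_n$, whereas the construction allows $b_n$ to be chosen arbitrarily large (independently of $x$); hence $|q_n|<1$ for all $n$ large enough, and Fact \ref{fact12} together with the standard tail estimate gives $q_n(T)\pi_{[0,\xi_n]}x\approx p_n(T)\pi_{[0,\xi_n]}x\approx Ax$.

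Finally, pick $p_{k_n,n}$ in the net with $|q_n-p_{k_n,n}|\leq 4^{-\nu_n}$, and decompose $Ax-T^{c_{k_n,n}}x$ by telescoping through $p_n(T)\pi_{[0,\xi_n]}x$, $q_n(T)\pi_{[0,\xi_n]}x$, $q_n(T)\pi_{[0,\nu_n]}x$, $p_{k_n,n}(T)\pi_{[0,\nu_n]}x$, and $T^{c_{k_n,n}}\pi_{[0,\nu_n]}x$. Each successive difference tends to zero by the preceding steps, by the net inequality, by Fact \ref{fact13}, and by Proposition \ref{prop15}. The only subtle point -- exactly as at the end of the proof of Proposition \ref{prop16} -- is the term $\|q_n(T)\pi_{[\xi_n+1,\nu_n]}x\|$ arising when passing from $\pi_{[0,\xi_n]}x$ to $\pi_{[0,\nu_n]}x$; using that $\zeta^{b_n+1}$ divides $q_n$ to write it as $\frac{1}{b_n}P_n(T)T^{b_n+1}\pi_{[\xi_n+1,\nu_n]}x$, one bounds it by $\frac{\|P_n(T)\|}{b_n}\cdot 2\|x\|$ via Lemma \ref{lem17}, and this tends to $0$ because $b_n$ can be taken to dominate $|P_n|\cdot 2^{\xi_n}$.
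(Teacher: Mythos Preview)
Your argument is essentially correct but takes a route different from the paper's. The paper does \emph{not} approximate $Ax$ directly. Instead, it reuses Proposition~\ref{prop16} verbatim: since $p_{k,n}(0)=0$ forces every $f_j$ ($j\geq 1$) to have zero $e_0$-component, one has $e_0^{*(n)}(x)=x_0$ for all $n$, so assumption~$(*)$ holds as soon as $|x_0|\geq 2^{-n}$; the proof of Proposition~\ref{prop16} then gives $e_1\in\overline{\mathcal{O}rb}(x,T)$. From this the paper concludes that $\overline{\mathcal{O}rb}(x,T)\supseteq H_0=\overline{\textrm{sp}}\{f_j:j\geq 1\}$, and since $Ax\in H_0$ one is done. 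Your approach instead builds a polynomial $P_n$ mapping $\pi_{[0,\xi_n]}x$ to $\pi_{[0,\xi_n]}x-x_0e_0$ and targets $Ax$ in one shot. This is more direct and avoids the (slightly implicit) step that $\overline{\mathcal{O}rb}(e_1,T)=H_0$; on the other hand the paper's route recycles Proposition~\ref{prop16} without any new estimate.

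One point in your write-up needs tightening. The sentence ``the construction allows $b_n$ to be chosen arbitrarily large (independently of $x$)'' reads as if $b_n$ may be adjusted after $x$ is given, which is a quantifier error: the operator is fixed first. What actually saves you is that for $n$ large enough (namely $n>\log_2(\|x\|/|x_0|)$) one has $|x_0|\geq 2^{-n}\|x\|$, and then your bound
\[
|P_n|\;\leq\; C'_{\xi_n}\Bigl(\frac{\|x\|}{|x_0|}\Bigr)^{\xi_n+1}\;\leq\; C'_{\xi_n}\,2^{\,n(\xi_n+1)}
\]
coincides with the constant $C_{\xi_n}$ that already governs Proposition~\ref{prop16}. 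Since the construction was carried out with $b_n\gg C_{\xi_n}\,2^{\xi_n}$ precisely to make that proposition work, your $|q_n|=|P_n|/b_n<1$ and the tail estimate $\|P_n(T)\|/b_n\to 0$ follow for all sufficiently large $n$. With this correction your telescoping argument goes through.
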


\begin{proof}
$\bullet$ If $\pss x {e_{0}} =0$, i.e $x=\sum_{j=1}^{+\infty
}x_{j}f_{j}$, then $Ax=Tx$.
\par\smallskip
$\bullet$ If $\pss x {e_{0}} =\alpha  \not =0$, then for every $n\geq 1$,
$\pi_{[0,\xi _{n}]}x=\alpha e_{0}+\sum_{j=1}^{\xi _{n}}e_{j}^{*(n)}(x)
e_{j}$, so if $n$ is large enough $|e _{0}^{*(n)}(x)|\geq 2^{-n}$. Hence
assumption (*) of Proposition \ref{prop16} is satisfied. Using the
notation of the proof of Proposition \ref{prop16}, $$||q(T)\pi_{[0,\xi
_{n}]}x-e_{1}||\leq \frac {3}{
\xi _{n}},$$ where $q$ is of degree less than $\xi _{n}+b_{n}$, $|q|<1$
and $\zeta ^{b_{n}+1}$ divides $q(\zeta )$. In particular $\zeta $
divides $q(\zeta )$, so with our definition of the polynomials $p_{k,n}$,
there exists a $k\leq k_{n}$ such that $|q-p_{k,n}|\leq 4^{-\nu _{n}}$.
Then the proof of Proposition \ref{prop16} shows that
$$||T^{c_{k,n}}x-e_{1}||\leq \frac {7}{\xi _{n}},$$ and hence $e_{1}$
belongs to the closure of the orbit of $x$. But the orbit of $e_{1}$
under $T$ is the linear span $H_{0}$ of the vectors $f_{j}$, $j\geq 1$.
This implies that the closure of $\mathcal{O}rb(x,T)$ contains
 $H_{0}$. Since
$Ax$ belongs to $H_{0}$, $Ax$ belongs to
$\overline{\mathcal{O}rb}(x,T)$, and this finishes the proof of Proposition
\ref{prop20}.
\end{proof}

\end{document}